\documentclass[11pt]{article}
\usepackage{geometry}
\usepackage{mathtools, amsthm, amsmath, amssymb, amsfonts, bbm}
\usepackage{syntonly}
\usepackage{etoolbox}
\usepackage{hyperref}
\newgeometry{margin=2cm}

\usepackage{graphicx, framed}
\graphicspath{{Images/}}
\usepackage{afterpage, xcolor}

\newcommand{\rn}{\mathbb{R}^n}
\newcommand{\R}{\mathbb{R}}
\newcommand{\sn}{\mathbb{S}^{n-1}}
\newcommand{\tn}{\mathbb{T}^n}
\newcommand{\h}[1]{\mathbb{H}^{e'}_{#1}}
\newcommand{\he}[1]{\mathbb{H}^{e}_{#1}}

\newcommand{\cs}{c^*}
\newcommand{\ws}{w^*}
\newcommand{\ls}{\lambda^*}
\newcommand{\e}[1]{e^{#1}}
\newcommand{\cd}{{\cdot}}

\newcommand{\ds}{\displaystyle}
\newcommand{\ih}[1]{\int\limits_{\mathbb{H}^{e'}_{#1}}}
\newcommand{\ihe}[1]{\int\limits_{\mathbb{H}^{e}_{#1}}}
\newcommand{\disk}[1]{D_{#1}}

\newtheorem{theorem}{Theorem}[section]
\newtheorem{defn}[theorem]{Definition}
\newtheorem{lemma}[theorem]{Lemma}
\newtheorem{prop}[theorem]{Proposition} 
\newtheorem{cor}[theorem]{Corollary}

%equation array with labeling
\newcommand{\eal}[3]{\begin{equation}\label{#1}\left\{\hspace{20pt}
\begin{array}{#2}
#3
\end{array}\right.
\end{equation}}

%equation array without labeling
\newcommand{\ea}[2]{\begin{equation*}\left\{\hspace{20pt}
\begin{array}{#1}
#2
\end{array}\right.
\end{equation*}}

\title{Logarithmic Bramson correction for multi-dimensional periodic Fisher-KPP equations}
\author{Beniada Shabani}
\date{}

\begin{document}
\maketitle

%\tableofcontents
%\newpage

\abstract{
We study the long time behavior of solutions of periodic Fisher-KPP type equations in $\rn$ that arise from compactly supported initial data. We prove that propagation along a fixed direction $e\in\sn$ is completely determined by an associated minimizing direction $e'$ which is unique and in a bijective correspondence with $e$. This correspondence allows us to determine the correct Bramson shift in higher dimensions and show that that the propagation along each $e$ occurs asymptotically at the speed $w_*(e)t-\frac{n/2+1}{\lambda_*(e')\,e\cdot e'}\log t+O(1)$. 
}

\section{Introduction}

We consider the Cauchy problem for the reaction-diffusion equation
\eal{kpp}{ll}{
u_t=\Delta u+\mu(x)f(u), & t>0, x\in \rn,\\
u(0,x)=u_0(x), & x\in \rn, 
}
%\textbf{NOTE:} CHANGE $f$ TO INCLUDE $\mu(x)f(u)$ AND $\mu(x)u-u^2$.
%\\
where the non-linearity $f$ is a $C(\R)$ function of Fisher-KPP type, i.e. 
\begin{equation}\label{kpptypef}
f(0)=f(1)=0, \ f'(0)>0, \ f'(1)<0, \ 0<f(s)\le f'(0)s  \text{ for all } s\in(0,1),
\end{equation}
and there exists $M>0$, $\alpha>0$ and $s_0\in(0,1)$ such that
\begin{equation*}
f(s)\ge f'(0)s-Ms^{1+\alpha} \ \text{ for all } s\in[0, s_0].
\end{equation*}
Typical examples of this are concave functions vanishing at $0$ and $1$, like $f(s)=s(1-s)$. By rescaling $\mu$ if necessary, we may assume that 
\begin{equation*}
f'(0)=1.
\end{equation*}
On the other hand we take $\mu\in C^\infty(\rn)$ to be uniformly positive and 1-periodic in the standard directions,
\begin{equation*}
\mu(x+k)=\mu(x) \ \text{ for all } k\in\mathbb{Z}^n, \ 0< \underline{\mu}\le \mu(x)\le \bar{\mu}<\infty.
\end{equation*}
We are interested in the level sets of the solutions of \eqref{kpp} arising from 'localized' initial data, in the sense that $u_0\in L^{\infty}(\rn)$ is compactly supported and satisfies
\begin{equation*}
0\le u_0(x)\le 1, \ u_0\not\equiv 0.
\end{equation*}
In that case the solution $u$ is classical for $t>0$, and the strong maximum principle implies that $0<u(t,x)<1$ for all $t>0$.\\

These equations arise naturally in many mathematical models in biology, ecology, chemical kinetics, etc., see e.g. \cite{xin}, with $u(t,x)$ usually denoting a local population density. They appeared originally in the works of Fisher \cite{fisher} and Kolmogorov, Petrovsky, Piskunov \cite{kpp} in the context of population genetics, and have ever since been extensively studied with both PDE and probabilistic tools, due to their connection to branching Brownian motion (see \cite{mck}). The solutions of \eqref{kpp} are characterized by the invasion of the unstable steady state by the stable one, in that case $0$ and $1$ respectively, as determined by the conditions on $f$. In this paper we study precisely the geometry and speed at which this invasion occurs.

Before stating the main theorem, we'll first recall some previous results, starting with the 1 dimensional homogeneous case studied in \cite{kpp}, building up towards higher dimensions and periodicity.

\subsubsection{Homogeneous equation in $\R$}

Let's first consider \eqref{kpp} when $\mu\equiv 1$ and $n=1$. In this case, the equation
\begin{equation*}
u_t=u_{xx}+f(u)
\end{equation*}
has traveling wave solutions of the form $U_c(t,x)=\phi_c(x-ct)$ for all $c\ge\cs=2$, which satisfy
\begin{equation*}
-c\phi_c'=\phi_c''+f(\phi_c), \ \phi_c(-\infty)=1, \ \phi_c(+\infty)=0.
\end{equation*}
These functions decay exponentially as $x\to \infty$ like $\phi_c(x)\sim C\e{-\lambda_cx}$ if $c>\cs$, and $\phi_{\cs}(x)\sim Cx\e{-\ls x}$ if $c=\cs$. The exponent is determined by exponential solutions $\e{-\lambda x}$ of the linearized equation at the right side tail, i.e. it's the smallest $\lambda$ that solves 
\begin{equation*}
\lambda c = \lambda^2+1.
\end{equation*}
In \cite{kpp} it is shown that the solutions of \eqref{kpp} approach asymptotically to the shifted slowest traveling wave on $\R^+$, i.e.
\begin{equation*}
\lim\limits_{t\to\infty} |u(t,x)-\phi_{\cs}(x-\cs t + s(t))| = 0 \ \text{ uniformly in } x\in\R^+,
\end{equation*}
where the shift $s(t)=o(t)$ as $t\to\infty$. In \cite{uchiyama} Uchiyama provides the first asymptotics on $s(t)$ via a refinement of the \cite{kpp} techniques: $s(t)=\frac{3}{2\ls}\log t + O(\log\log t)$. Bramson refined this result furthermore in \cite{bram1, bram2}, where he shows by entirely probabilistic tools to show that the error is in fact of $O(1)$. More precisely, he showed that the level sets $L_m(t)=\{x: u(t,x)=m\}$ of $u$ for $m\in(0,1)$ lie in
\begin{equation*}
L_m(t)\cap\R^+ \subset \left[\cs t-\frac{3}{2\ls}\log t + x_m - \frac{C}{\sqrt{t}}, \cs t-\frac{3}{2\ls}\log t + x_m + \frac{C}{\sqrt{t}}\right],
\end{equation*}
where the constant shift $x_m$ depends on $m$ and the initial data. The exact value of the constant $C$ has been studied in \cite{lau} and \cite{es1}, \cite{es2}, with the authors in the last two papers formally deriving that $C=3\sqrt{\pi}$. This prediction was verified for a linearized boundary value problem in \cite{henderson}. A new proof of the sharp asymptotics, together with more refined error estimates for the Fisher-KPP case is provided in the works \cite{nrr1}, \cite{nrr2} of Nolen, Roquejoffre and Ryzhik. The sharpest results to date are due to J. Berestycki, Brunet and Derrida in \cite{bbd} and state that the front $u(t,x)=m$ is located at
\begin{equation*}
2t-\frac{3}{2}\log t + x_m - \frac{3\sqrt{\pi}}{\sqrt{t}}+\frac{9}{8}(5-6\log 2)\frac{\log t}{t} + O\left(\frac{1}{t}\right).
\end{equation*}

\subsubsection{Periodic equation}\label{periodicequation}

In the periodic case the notion of traveling waves is replaced by that of periodic pulsating fronts $U_c(t,x)$ associated to a unit direction $e$. These are functions of the form $U_c(t,x)=\phi_c(x\cd e - ct, x)$, with $\phi_c$ periodic in $x$, with boundary conditions
\begin{equation*}
\lim_{s\to -\infty}\phi_c(s,x) = 1, \ \lim_{s\to\infty} \phi_c(s, x)=0, \ \text{ uniformly in } x.
\end{equation*}
$U_c$ itself satisfies the equation
\begin{equation*}
(U_c)_t=\Delta U_c+\mu(x)f(U_c), \ t>0, x\in \rn.
\end{equation*}
Their existence and properties have been established in \cite{bh2}, \cite{bhn1}, and the main result is that for each direction, there exist positive minimal speeds $c^*(e)>0$ such that $U_c$ exists and is unique for all $c\ge c^*(e)$, and no such fronts exist otherwise. These minimal speeds can be characterized using solutions in the form of periodic perturbations of exponential traveling fronts for the linearized equation. Indeed, $\bar{u}(t,x)=e^{-\lambda(x\cd e-ct)}\psi(x; e, \lambda)$ is a positive solution of the linearized equation 
\begin{equation}\label{linear}
\bar{u}_t=\Delta \bar{u} +\mu(x)\bar{u}, \ x\in\rn
\end{equation}
only if $\psi$ satisfies
\begin{equation*}
\lambda c\psi = \Delta \psi - 2\lambda e \cd \nabla \psi + (\lambda^2+\mu(x))\psi.
\end{equation*}
Given $\lambda>0$, let $\psi(x;e, \lambda)$ be the positive eigenfunction of the 1-periodic eigenvalue problem
\begin{equation}\label{eigenvalue}
\gamma(e,\lambda)\psi = \Delta \psi - 2\lambda e \cd \nabla \psi + (\lambda^2+\mu(x))\psi, \ \psi(x+e_i;e, \lambda)=\psi(x;e, \lambda), \ x\in\rn,
\end{equation}
with eigenvalue $\gamma(\lambda, e)$, normalized such that 
\begin{equation}\label{normalization}
\int\limits_{x\in\mathbb{T}^n} \psi(x;e,\lambda)=1.
\end{equation}
In the future we will simplify the notation by dropping both $e$ and $\lambda$, if this introduces no confusion.\\
Then $\bar{u}(t,x)$ solves \eqref{linear} if $c=\gamma(e,\lambda)/\lambda$, so we can in particular define the minimal speed of propagation to be 
\begin{equation}\label{minspeed}
c^*(e)=\inf_{\lambda>0} \frac{\gamma(e,\lambda)}{\lambda},
\end{equation}
achieved at $\lambda=\ls(e)$. This $\cs(e)$ is precisely the speed of the slowest pulsating front along $e$, and the exponential solution corresponding to it will play a crucial role in the analysis below.

\subsubsection{Propagation speed for $u_0\in C_c(\rn)$}
The question of spreading for localized data in higher dimensions was first addressed by Aronson and Weinberger in \cite{aw2} in 1978 in the homogeneous setting $\mu\equiv 1$. Here it was shown that the solutions propagate at a uniform speed $\cs=2$ in every direction, i.e.
\ea{ll}{
\ds \lim\limits_{t\to\infty}\sup\limits_{||x||\ge ct}u(t,x)= 0  & \ \text{ for } c>\cs, \\
\ds \lim\limits_{t\to\infty}\inf\limits_{||x||\le ct}u(t,x)= 1 & \ \text{ for } 0\le  c<\cs. 
}

In the periodic medium, the first results were due to Freidlin and G\"artner in \cite{fg} in 1979, who used probabilistic tools to determine that linear in time spreading along a fixed $e$ happens with speed $\ws(e)$, given by the so-called \textit{Freidlin-G\"artner formula}, %From a PDE prospective, the speed $\ws(e)$ of propagation in the direction $e$ can not be faster than that of any pulsating front along an $\bar{e}$ with $e\cd\bar{e}>0$, and since $\phi_{\cs(\bar{e}),\bar{e}}(x\cd \bar{e}-\cs(\bar{e})t,x)\sim O(1)$ for $x\cd \bar{e}\sim \cs(\bar{e})t$, $u$ can be of $O(1)$ along $e$ only for $|x|e\cd \bar{e}\lesssim \cs(\bar{e})t$. Thus spreading along $e$ can happen with a speed at most
\begin{equation}\label{wstar}
\ws(e)=\inf\limits_{e\cd\bar{e}>0}\frac{\cs(\bar{e})}{e\cd\bar{e}}.
\end{equation} 
This means that 
\begin{equation*}
\lim\limits_{t\to\infty} u(t, x+ct\,e) = \left\{\begin{array}{l}
0 \qquad \text{ if } c>\ws(e)\\
1 \qquad \text{ if } 0\le c < \ws(e) .
\end{array}  \right.
\end{equation*}
Note that $\ws$ in general can be much lower than $\cs$ (see figure \ref{fig1}). Moreover, the infimum for $\ws(e)$ is achieved at a \textit{minimizing direction} $e'$, which plays a principal role for the propagation in $e$ and to which we devote a full section below. 

\begin{figure}[h]
\centering
\includegraphics[width=0.52\textwidth]{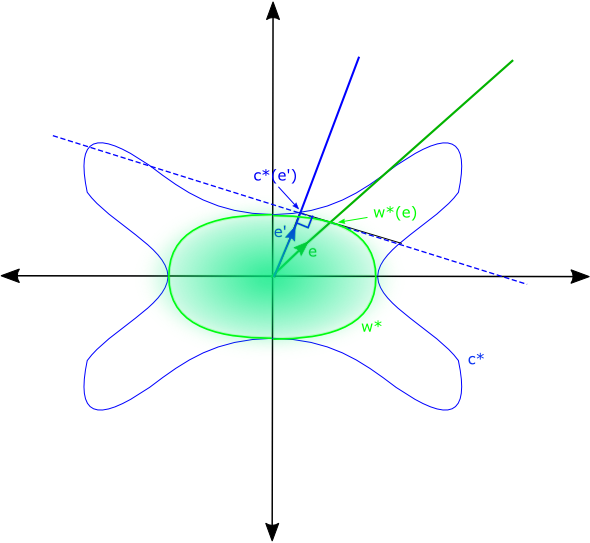} 
\caption[b]{\footnotesize 
The true propagation speed $\ws(e)$ may be much lower than the traveling front speed $\cs(e)$ and it is determined uniquely by its \textit{minimizing direction} $e'$. 
}
\label{fig1}
\end{figure}

This propagation result has been revisited many times in the past, with new methods of proof including ideas from large deviations in probability due to Freidlin in \cite{freidlin}, viscosity solutions due to Evans and Souganidis in \cite{evsoug}, monotone dynamical systems due to Weinberger in \cite{weinberger}, and PDE methods from H. Berestycki, Hamel and Nadin in \cite{bhn2}.

\subsection{Main results}

The first important result of the paper regards the \textit{minimizer} $e'$ of $e$, which for the sake of clarity we define properly here.

\begin{defn}\label{min.def}
Let $e$ be a unit direction. We call $e'\in\sn$ a minimizer for $e$ if $\ws(e)=\frac{\cs(e')}{e\cd e'}$, i.e. if it achieves the minimum in \eqref{wstar}.
\end{defn}
%----------------------------------------------------------------------------
\begin{theorem} \label{unique}
Given any $e\in\sn$, the minimizer $e'\in\sn$ is unique.  Moreover, there is a one-to-one correspondence between $e\in\sn$ and $e'\in\sn$.
\end{theorem}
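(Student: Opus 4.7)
I would translate the definition of minimizer into a convex-geometric statement about a Wulff-type body $A \subset \rn$ built from $\cs$, then extract both uniqueness and bijectivity from the simultaneous $C^1$-regularity and strict convexity of $A$. Concretely, define
\[
A := \bigcap_{\bar e\in\sn} \{x\in\rn : x\cd \bar e \le \cs(\bar e)\}.
\]
Since $\bar e\mapsto\cs(\bar e)$ is continuous and uniformly positive on $\sn$, $A$ is a compact convex body with $0$ in its interior. A direct computation of its radial function gives $\sup\{t\ge 0:te\in A\} = \inf_{\bar e\cd e>0} \cs(\bar e)/(\bar e\cd e) = \ws(e)$, so the map $e\mapsto\ws(e)e$ radially parameterizes $\partial A$.

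Next I would reformulate Definition \ref{min.def} geometrically. By that definition, $e'$ is a minimizer of $e$ iff $\ws(e)\,(e\cd e') = \cs(e')$, i.e., iff the boundary point $\ws(e)e$ lies on the hyperplane $\{x\cd e' = \cs(e')\}$. Since $A \subset \{x\cd e' \le \cs(e')\}$ by construction, this is equivalent to $e'$ being an outer unit normal to $A$ at $\ws(e)e$. Consequently, uniqueness of $e'$ given $e$ is equivalent to $\partial A$ having a unique supporting hyperplane at $\ws(e)e$ (i.e., $A$ is $C^1$), and injectivity of $e\mapsto e'$ is equivalent to $A$ being strictly convex (each supporting hyperplane touches $\partial A$ in exactly one point). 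Surjectivity of $e\mapsto e'$ follows from identifying the $1$-homogeneous extension $\cs(p):=|p|\cs(p/|p|)$ with the support function $h_A$, so that every $\bar e\in\sn$ arises as an active outer normal direction; this identification in turn requires convexity and lower-semicontinuity of the extension.

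To establish both $C^1$-regularity and strict convexity of $A$, I would use analytic properties of the eigenvalue problem \eqref{eigenvalue}. Substituting $p=\lambda\bar e$, the operator becomes $L_p:=(\nabla-p)^2+\mu(x)$ on $\tn$, whose principal eigenvalue $H(p)$ depends real-analytically on $p$ by Kato's analytic perturbation theory, and is strictly convex with positive-definite Hessian. Then \eqref{minspeed} extends to
\[
\cs(p) \;=\; \inf_{s>0}\, s\,H(p/s),
\]
the perspective envelope of $H$; this is positively $1$-homogeneous, convex, $C^1$ away from $0$, and strictly convex modulo radial rays, inheriting these properties from $H$. Standard convex duality then translates smoothness of the support function $\cs$ into strict convexity of $A$, and strict convexity of $\cs$ (modulo homogeneity) into $C^1$-smoothness of $\partial A$, closing the argument.

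The main obstacle is the analytic input of the last step: establishing strict convexity (positive-definite Hessian) of the non-self-adjoint principal eigenvalue $H(p)$ rather than merely its convexity, and carefully propagating both smoothness and strict convexity through the perspective construction to $\cs$ on $\rn\setminus\{0\}$. Once these properties of $\cs$ are in hand, the geometric conclusions — uniqueness of $e'$, injectivity of $e\mapsto e'$, and surjectivity via $\cs=h_A$ — follow by routine convex-geometric duality.
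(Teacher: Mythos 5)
Your proposal is correct, and it takes a genuinely different, more systematically convex-geometric route than the paper. The paper does not introduce the Wulff body $A$ at all: it first derives, from the eigenfunction-perturbation identities of Lemma \ref{nu.existance}, the formula $\nabla\cs(e')=\ws(e)e$ at any minimizer; uniqueness and surjectivity then follow by arguing that two minimizers would force the convex, analytic, superlinear function $\gamma$ to have equal gradients at two points, hence to be affine on the joining segment, hence (by analyticity) on a whole line, a contradiction; injectivity ("$e'$ minimizes at most one $e$") is handled by a separate elementary argument showing that the graph of $\cs$ would otherwise lie outside two spheres and coincide with their intersection point, contradicting Lipschitz continuity. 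Your version repackages all three claims as dual regularity statements about $A$ (uniqueness $\leftrightarrow$ smoothness of $\partial A$, injectivity $\leftrightarrow$ strict convexity of $A$, surjectivity $\leftrightarrow$ $\cs=h_A$), with a single analytic input: positive-definiteness of $D^2H$ pushed through the perspective formula $\cs(p)=\inf_{s>0}sH(p/s)$. The ingredients are essentially equivalent in depth — the positive-definite Hessian you flag as the main obstacle is exactly the matrix $2\int_{\tn}\nu\,(e_i+\nabla\bar{\eta}_i)\cd(e_j+\nabla\bar{\eta}_j)\,dx$ that the paper computes later (the matrices $\Lambda$ and $S$ in Sections 5--6), positive definite because a periodic corrector cannot have gradient identically $-e_i$ — but the two arguments trade differently: the paper's injectivity step needs only Lipschitz continuity of $\cs$, while yours yields strictly more in the end ($\cs$, $\ws$ and the map $e\mapsto e'$ come out $C^1$, indeed analytic, and the correspondence is a homeomorphism of $\sn$). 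One point to make explicit if you write this up: a minimizer $\bar e$ of $e$ automatically satisfies $h_A(\bar e)=\cs(\bar e)$ (so uniqueness and injectivity do not need the identification $h_A=\cs$), and it is only surjectivity that requires convexity of the one-homogeneous extension, exactly as you indicate.
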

%----------------------------------------------------------------------------

This result is surprising at first, because $e'$ itself simply minimizes a quantity that is not a priori convex, and therefore there is no reason why the it should be unique, and moreover that every direction is itself a minimizer. The key point in obtaining this was to find the exact way in which $e$ arises from $e'$, which to the best of our knowledge had not been known so far. Furthermore, this indicates that the profile of the pulsating speeds $\cs$ may be more regular than previously thought.\\

The second main result concerns the asymptotics of propagation of the solution $u$ and can be stated as follows:
%----------------------------------------------------------------------------
\begin{theorem}\label{main}
Let $u(t,x)$ be the solution of \eqref{kpp}, with $u_0$ compactly supported in $\rn$, and satisfying $0\le u_0(x)\le 1, u_0\not\equiv 0$. Then for any direction $e\in\sn$ and $\epsilon>0$, there exists a time $t_0(\epsilon, e)$ and a length $L(\epsilon, e)$ such that 
\begin{equation}\label{lower}
u\ge 1-\epsilon \ \text{ for all } t>t_0(e,\epsilon)\ \text{ and } x=R\, e,\ R\in \left[0, \ws(e) t - \frac{n/2+1}{\ls(e') e\cd e'}\log t - L(e, \epsilon)  \right],
\end{equation}
and
\begin{equation}\label{upper}
u\le \epsilon \ \text{ for all } t>t_0(e,\epsilon)\ \text{ and } x=R\, e,\ R\in \left[\ws(e) t - \frac{n/2+1}{\ls(e') e\cd e'}\log t + L(e, \epsilon), \infty \right).
\end{equation}
\end{theorem}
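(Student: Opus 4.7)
The plan is to establish \eqref{upper} and \eqref{lower} separately by comparison, using super- and sub-solutions aligned with the minimizing direction $e'$ supplied by Theorem~\ref{unique}. Although propagation is measured along $e$, both the exponential decay rate $\ls(e')$ and the logarithmic shift $(n/2+1)/(\ls(e')\, e\cd e')$ are controlled by the geometry of $e'$: a wave packet of \eqref{linear} concentrated near the critical $\ls(e')$ travels at speed $\cs(e')$ along $e'$ and projects onto $e$ with speed $\ws(e)=\cs(e')/(e\cd e')$. Uniqueness of $e'$, established in Theorem~\ref{unique}, guarantees a single critical direction for the wave packet, a prerequisite for any sharp asymptotics.

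\textbf{Proving \eqref{upper}.} Since $f(u)\le u$, $u$ is a subsolution of \eqref{linear}, so it suffices to construct a supersolution of \eqref{linear} with the required decay. I would take a superposition
\begin{equation*}
\bar{u}(t,x) = \int A(\lambda)\,\psi(x;e',\lambda)\,\e{-\lambda(x\cd e' - (\gamma(e',\lambda)/\lambda)t)}\,d\lambda,
\end{equation*}
with $A$ supported near $\ls(e')$. Expanding $\gamma(e',\lambda)/\lambda$ to second order at $\ls(e')$, where the first derivative vanishes by \eqref{minspeed}, converts the $\lambda$-integral into a Gaussian contributing a $t^{-1/2}$ amplitude factor; an analogous integration over $(n-1)$ transverse Fourier modes contributes $t^{-(n-1)/2}$, so linear comparison alone delivers a supersolution of size $\sim t^{-n/2}\,\e{-\ls(e')(x\cd e' - \cs(e')t)}\psi$. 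The remaining $t^{-1}$ factor --- the nonlinear Bramson correction responsible for the $+1$ in the shift --- is extracted by truncating the linear flow to the exterior of an artificial absorbing boundary placed just behind the claimed front location. Setting $\bar{u}(t,Re)\le\epsilon$ and solving for $R$ yields \eqref{upper}.

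\textbf{Proving \eqref{lower}.} I would first invoke the Freidlin--G\"artner spreading result to obtain $T,\delta,\rho>0$ such that $u(T,\cdot)\ge\delta$ on the ball of radius $\rho T$. From this positive profile I would build a subsolution of \eqref{kpp} of the form
\begin{equation*}
\underline{u}(t,x) = g(t,\,x\cd e' - \cs(e')t)\,\psi(x;e',\ls(e'))\,\e{-\ls(e')(x\cd e' - \cs(e')t)},
\end{equation*}
where $g\ge 0$ is compactly supported and decays in time like $t^{-(n/2+1)}$, modeled on the solution of the linearized Dirichlet problem in the exterior of a moving boundary. Verifying the subsolution inequality requires the nonlinear defect $\mu(\underline{u}-f(\underline{u}))\le \mu M\underline{u}^{1+\alpha}$ to be dominated by the (negative) linear residual arising from the time-decay of $g$; this closes in the regime $\underline{u}\le s_0$, which is precisely where $\underline{u}$ actively constrains the front. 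Parabolic comparison then yields \eqref{lower}, while the behind-the-front estimate $u\ge 1-\epsilon$ follows from standard convergence to the stable state $1$ once the front location is pinned down.

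\textbf{Main obstacle.} The sharp construction of $g$ is the principal technical challenge. Because $\psi(x;e',\ls(e'))$ is not constant, a pure product ansatz generates cross-terms coupling $g_\xi$ to $e'\cd\nabla\psi$ which must be absorbed either by adjoint-weighted averaging over the period cell or by an explicit periodic corrector, all while preserving the exact $t^{-(n/2+1)}$ decay. Moreover, the analysis lives in the tilted frame of $e'$, so the final projection onto $e$ relies on good analytic control of the bijection $e\leftrightarrow e'$ from Theorem~\ref{unique}. By contrast, the initial Freidlin--G\"artner spreading phase and the convergence $u\to 1$ behind the front should be routine applications of existing periodic parabolic techniques.
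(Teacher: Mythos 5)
Your lower-bound strategy is essentially the paper's: a sub-solution built from the Dirichlet problem for the linearization in a half-space moving along $e'$ with a logarithmic delay, a periodic corrector (the paper's Lemma \ref{nu.existance}, which rewrites the advection in divergence form with weight $\nu$ and a divergence-free flux of mean $\ws(e)e$) to absorb the $\nabla\psi/\psi$ cross-terms you flag, and Harnack plus convergence to $1$ behind the front. What you defer as ``modeled on the solution of the linearized Dirichlet problem'' is, however, where almost all of the work lies: the two-sided estimate $\bar p(t,\ws(e)t\,e+x)\asymp (x\cd e')\,t^{-(n/2+1)}$ at the log-delayed front (Propositions \ref{linearbounds} and \ref{p.bar.est}) occupies Sections \ref{linearspace}--\ref{logspace} and requires a Nash inequality lifted to $\mathbb{R}^{n+2}$, exponential-moment bounds, Fabes--Stroock/Norris heat-kernel estimates, and a multi-scale approximate solution. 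Deferring this is acceptable in an outline, but it is the actual source of the exponent $n/2+1$, not a routine input.

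The upper bound has a genuine gap. Your free-space superposition over $\lambda$ near $\ls(e')$ (plus transverse modes) is indeed a super-solution of \eqref{kpp} on all of $\rn$, but it only yields the amplitude $t^{-n/2}$, hence a shift $\frac{n/2}{\ls(e')\,e\cd e'}\log t$ --- off from \eqref{upper} by the unbounded amount $\frac{1}{\ls(e')\,e\cd e'}\log t$. You correctly locate the missing factor $t^{-1}$ in the Dirichlet truncation, but once an absorbing boundary is imposed the truncated function is no longer a super-solution of \eqref{kpp} on $\rn$: behind the boundary $u$ is close to $1$ while the truncation vanishes, so comparison can only be applied after gluing the truncated function to the constant $1$, and the gluing is admissible only across a surface on which the Dirichlet solution is $\ge 1$. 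For a single direction, Corollary \ref{order.one} provides such a lower bound only on a patch of transverse diameter $O(\sqrt t)$ around the ray $\ws(e)t\,e$, not on the whole moving hyperplane. This is precisely why the paper circumscribes the front surface $\mathcal{W}_t$ by a tangent polyhedron with $N(t)=O(t^{(n-1)/2})$ faces, sums the corresponding functions $W_{e_i}$, and then must check --- Gaussian decay for the algebraically many far directions, exponential decay for the $O(1)$ nearby ones --- that the sum is still $\le\epsilon$ at distance $L$ past the front. That multi-directional patching is one of the two difficulties the paper explicitly singles out, and it is absent from your proposal; without it, or a substitute for it, the upper bound does not close.
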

%----------------------------------------------------------------------------

Here $\ls(e')$ is the decay rate of the slowest exponential solution along $e'$ in the whole space, which will be defined in Section \ref{uniqueness}.

Note that while the analogous problem in $\R$ has been revisited many time in the past and is by now well understood, the only available result on sharp asymptotics of multi-directional spreading is due to G\"artner in \cite{gartner} in 1982. Here he uses probabilistic methods to control the first exit densities of Brownian paths from convex and concave curves to obtain an integral relation for the location of the fronts. This in turn determines the location up to a quantity of order $O(1)$. His result is however restricted to the homogeneous case $\mu(x)\equiv 1$, with $f(u)=u(1-u)$ and spherically symmetric initial data.

\subsubsection{Remarks}

First, the correction is fully determined by the minimizer, and as such it leads to the question of whether that or at least the quantity $\ls(e')e\cd e'$ is unique. This is indeed proved in Theorem \ref{unique} in the next section. More importantly, in the same theorem we prove the stronger statement: there is a one-to-one correspondence between directions $e\in\sn$ and minimizers $e'\in\sn$. Second, the factor $\frac{n}{2}+1$ is universal and does not depend on reaction term and the initial data. $\ws(e)$ and $\ls(e')e\cd e'$ on the other hand depend on $\mu(x)$ and $f'(0)$, and the only part depending on $u_0$ is the term of order $O(1)$, an observation that agrees with previous results in one dimension.

Third, the correction is dimension dependent. This suggests that the phenomena of multi-directional spreading can not be captured by the one dimensional pulsating fronts, which are crucial in the PDE proofs in $\R$. Indeed, working in $\rn$ is very different from $\R$, because there are few existing tools to deal with the difficulties arising from geometric considerations. Previous research focuses on homogeneous media (e.g. \cite{aw1},\cite{aw2},\cite{gartner}), front propagation (e.g. \cite{bh1},\cite{bh2}) and spreading in $\R$ (e.g. \cite{hnrr1},\cite{hnrr2}), and all of these are essentially one dimensional phenomena. Our main contribution has been to develop new methods for analyzing the combined effect of multiple directions in heat kernel estimates and in construction of super-solutions controlling $u$.

Finally, the higher the dimension, the greater the lagging behind the linear term. This can be understood at least in the homogeneous case from McKean's probabilistic interpretation of the equation via branching Brownian motion \cite{mck}.

\subsection{Strategy of the proof}

The plan for the rest of the paper is as follows: Section \ref{uniqueness} is devoted to the proof of the uniqueness result. This requires a connection of $e$ to $e'$ which follows from an analysis of the linearization of \eqref{kpp}, carried in the first part of the section

%Theorem \ref{main} is proved in Section \ref{mainproof}.
The idea of the proof for Theorem \ref{main}  is to compare $u$ in the regions where it is small to the solutions of the linearized equation. To make up for the growing effect of the linearized equation, we impose Dirichlet conditions in a properly moving boundary. Since the main blocking of the propagation in $e$ comes from $e'$, we first look at bounds for the linearized equation in the half-space moving linearly in the direction $e'$, with speed $\cs(e')$. However as shown in 
%propositions \ref{upperlinearbound} and \ref{lowerlinearbound}
proposition \ref{linearbounds} these functions will still display growth in the direction of $e$. To deal with this, we introduce a logarithmic delay in the moving boundary that will in turn produce the correct logarithmic delay of the propagation in $e$. The results on the asymptotics of the solutions in both frames are stated in Section \ref{asymptotics}; however, as they are quite technical, their proofs are postponed till \ref{linearspace} and \ref{logspace}. Assuming these linear bounds, we prove Theorem \ref{main} in Section \ref{mainproof} by using the solutions of the Dirichlet problem to construct sub- and super-solutions that control the behavior of $u$ close to the spreading front.

The idea of comparing the equation to its linearization in a moving frame, together with the analysis, comes from \cite{hnrr2}, where it is used to find the logarithmic delay in $\R$. The main difficulty in our case arises due to the discrepancy between the two involved directions ($e$ and $e'$), as well as in the proof of the upper bound in the main theorem, in which we need to consider simultaneously all the solutions of the Dirichlet problem arising from multiple directions in $\rn$.

\subsection*{Acknowledgment}
The results presented here are part of the PhD thesis of the author, and would not have been possible without the invaluable guidance and encouragement of her advisor Lenya Ryzhik at Stanford University, and of Jean-Michel Roquejoffre. The author is immensely grateful to both of them. She would also like to thank Henri Berestycki for inviting her to spend the academic year 2017-2018 at the Centre d'Analyse et de Math\'ematique Sociales at EHESS, Paris, funded by the ERC under the European Union’s Seventh Framework Programme (FP/2007-2013) / ERC Grant Agreement 321186 - ReaDi, as well as acknowledge the hospitality of the Centre. 

%The author would like to thank... She also acknowledges the hospitality of the Centre d'Analyse et de Math\'ematique sociales at EHESS, Paris, for the academic year 2017-2018, under the ERC under the European Union’s Seventh Framework Programme (FP/2007-2013) / ERC Grant Agreement 321186 - ReaDi. 

\section{Uniqueness of the minimizer} \label{uniqueness}

\subsection{Linearization}
Let us focus first in understanding the linearized equation
\begin{equation}\label{linearequation}
v_t=\Delta v+\mu(x)v.
\end{equation}
As mentioned in Section \ref{periodicequation}, this equation has a family of global exponential solutions of the form $\e{-\lambda(x\cd e-ct)}\psi(x;e,\lambda)$ along each direction $e$. We are ultimately interested in studying the asymptotics of $v$ in a domain that moves in the direction of the minimizer $e'$, therefore it makes sense to write $v$ as a perturbation of the slowest exponential solution along $e'$:

\begin{equation}\label{perturbed}
v(t,x)=\e{-\ls(e')(x\cd e'-\cs(e')t)}\psi(x;e',\ls(e'))p(t,x;e')
\end{equation}
The $e'$'s in the arguments of $v, \psi$ and $p$ will be subsequently dropped to avoid lengthy notation. $p$ satisfies
\begin{equation}\label{linearframe}
p_t=\Delta p +2\left(\frac{\nabla \psi}{\psi} - \ls(e')e'\right)\cd \nabla p
\end{equation}
Denote the advective coefficient by
\begin{equation}\label{kappa}
2\left(\frac{\nabla \psi(x;e',\ls(e'))}{\psi(x;e',\ls(e'))} - \ls(e')e'\right) :=\kappa(x;e').
\end{equation}

\noindent The first step is to write \eqref{perturbed} in a self-adjoint form via the following crucial lemma.

%----------------------------------------------------------------------
\begin{lemma} \label{nu.existance}
There exist a unique positive, periodic scalar function $\nu(x;e')$ such that 
\begin{equation} \label{normalized}
\int\limits_{\mathbb{T}^n}\nu(x;e)dx=1,
\end{equation}
and a divergence-free, vector-valued $F(x;e')$ satisfying
\begin{equation}\label{drift}
\int\limits_{\mathbb{T}^n} F(x;e')dx = \ws(e)e,
\end{equation}
with the property that for any function $p(x)$,
\begin{equation} \label{selfadj}
\Delta p +\kappa(x;e')\cd \nabla p
=\frac{1}{\nu(x;e')}\nabla \cd (\nu(x;e')\nabla p)-\frac{F(x;e')}{\nu(x;e')}\cd\nabla p.
\end{equation}
\end{lemma}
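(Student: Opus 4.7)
The identity \eqref{selfadj} is algebraically equivalent to a single constraint on the pair $(\nu, F)$. Expanding the right-hand side,
$$\frac{1}{\nu}\nabla\cd(\nu\nabla p)-\frac{F}{\nu}\cd\nabla p = \Delta p+\left(\frac{\nabla\nu}{\nu}-\frac{F}{\nu}\right)\cd\nabla p,$$
so matching the drift against $\kappa(x;e')\cd\nabla p$ forces
$$F=\nabla\nu-\nu\,\kappa(x;e').$$
Therefore the lemma reduces to three substeps: (i) produce a unique positive, periodic $\nu$ with $\int_{\tn}\nu=1$ such that the above $F$ is divergence-free; (ii) verify $\nabla\cd F=0$; and (iii) prove the drift identity $\int_{\tn} F\,dx = \ws(e)\,e$.

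\textbf{Step 1: existence and uniqueness of $\nu$.} The divergence-free requirement on $F=\nabla\nu-\nu\kappa$ is precisely the equation $\Delta\nu-\nabla\cd(\nu\kappa)=0$ on the torus, which is the formal adjoint on $\tn$ of the elliptic operator $Lp:=\Delta p+\kappa(x;e')\cd\nabla p$. Because the stationary profile in the moving frame of the slowest exponential solution along $e'$ is $p\equiv 1$, one reads off directly from \eqref{linearframe} that $L\mathbf{1}=0$. Thus $0$ is a principal eigenvalue of $L$ with a positive (constant) eigenfunction, and the Krein-Rutman theorem on $C(\tn)$ (applicable since $\kappa$ is smooth and periodic) gives that $0$ is simple and that the adjoint $L^*$ has a positive periodic eigenfunction $\nu(x;e')$ with the same eigenvalue, unique up to a positive scalar. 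The normalization \eqref{normalized} then pins $\nu$ down.

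\textbf{Step 2: $F$ is divergence-free.} With $\nu$ as above and $F:=\nabla\nu-\nu\kappa$, we have $\nabla\cd F=\Delta\nu-\nabla\cd(\nu\kappa)=L^{\ast}\nu=0$ by construction.

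\textbf{Step 3: the drift identity (the main obstacle).} Integrating the formula $F=\nabla\nu-\nu\kappa$ over $\tn$, the gradient term drops by periodicity, so by \eqref{kappa} and \eqref{normalized},
$$\int_{\tn} F\,dx = -\int_{\tn}\nu\,\kappa\,dx = -2\int_{\tn}\nu\,\frac{\nabla\psi}{\psi}\,dx+2\ls(e')\,e'.$$
The remaining integral is the essential computation: I would differentiate the periodic eigenvalue problem \eqref{eigenvalue} in $\lambda$ and in the unit direction (tangent to $\sn$ at $e'$), pair with the adjoint eigenfunction $\nu$ produced in Step 1, and use the normalization \eqref{normalization} to obtain derivative formulas for $\gamma(e',\lambda)$ in terms of integrals against $\nu$. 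The optimality of $\ls$ in \eqref{minspeed} gives $\partial_\lambda(\gamma/\lambda)=0$, hence $\gamma'_\lambda(\ls)=\gamma(\ls)/\ls=\cs(e')$, while the first-order condition for $e'$ being a minimizer of $\cs(\bar e)/(e\cd\bar e)$ on $\sn$ yields the analogous identity tangent to the sphere. Combining these with $\ws(e)=\cs(e')/(e\cd e')$ and rearranging produces
$$-2\int_{\tn}\nu\,\frac{\nabla\psi}{\psi}\,dx+2\ls(e')\,e' = \ws(e)\,e,$$
which is exactly \eqref{drift}. This last step is where the bijective link between $e$ and $e'$ from the preceding subsection enters, and is the only non-routine computation; Steps 1 and 2 are standard elliptic/Krein-Rutman facts.
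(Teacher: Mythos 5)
Your Steps 1 and 2 coincide with the paper's argument: reduce \eqref{selfadj} to $F=\nabla\nu-\nu\kappa$, observe that the divergence-free condition is exactly $L^*\nu=0$ for $L=\Delta+\kappa\cd\nabla$, note $L\mathbf{1}=0$, and invoke Krein--Rutman plus the normalization. That part is complete and correct.

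Step 3, however, is where the entire content of the lemma lives, and you have only outlined it. "I would differentiate the eigenvalue problem in $\lambda$ and tangentially, pair with $\nu$, and rearranging produces \eqref{drift}" is a plan, not a derivation: nothing in your write-up actually establishes that the integral $-2\int_{\tn}\nu\,\nabla\psi/\psi\,dx+2\ls(e')e'$ equals $\ws(e)e$, and this is precisely the non-obvious identity that encodes how $e$ re-emerges from $e'$. To close it you need the chain the paper carries out explicitly: differentiating \eqref{psiinrn} in $e_j$, setting $\chi_j=\Psi_j/\psi$, deriving $L(e)\chi_j=\ls(e)\kappa\cd e_j+\Gamma_j(e)$, integrating against $\nu$ (so the left side vanishes by $L^*\nu=0$), then showing $\Gamma_j(e)=\ls(e)\,\partial_{e_j}\cs(e)$ via $\partial_\lambda(\gamma/\lambda)|_{\ls}=0$, and finally $\nabla\cs(e')=\ws(e)e$ from the minimizer condition. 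Note also a technical point your on-sphere formulation glosses over: the tangential first-order condition for $e'$ gives only $n-1$ of the $n$ scalar identities needed for the vector equation $\nabla\cs(e')=\ws(e)e$; the radial component comes from the degree-one homogeneity of $\cs$ (Euler's relation $e'\cd\nabla\cs(e')=\cs(e')=\ws(e)\,e\cd e'$). The paper handles this by extending $\psi,\gamma,\cs,\ws,\ls$ homogeneously to all of $\rn\setminus\{0\}$ so that the $\lambda$-derivative and the angular derivatives are packaged into a single full gradient; if you stay on $\sn\times\R^+$ you must supply that radial identity separately, and as written you have not.
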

%----------------------------------------------------------------------

This means that equation \eqref{linearframe} can be rewritten as
\begin{equation*}
p_t=\frac{1}{\nu(x)}\nabla\cd(\nu(x)\nabla p) - \frac{F(x)}{\nu(x)}\cd \nabla p
\end{equation*}
with a total weighted drift $F$ in the direction $e$, with the mass $\ws(e)$. This is very important because it indicates that when looking in a direction of the minimizer $e'$, any mass that survives moves towards $e$ and not $e'$, with speed precisely $\ws(e)t$. So while $e'$ comes naturally from $e$ by definition \ref{min.def}, here we see the way $e$ arises from $e'$ as the preferred direction of travel for the perturbations of the exponential solutions along $e'$.

\begin{proof}
Dropping the $e'$, \eqref{selfadj} is equivalent to 
\begin{equation}\label{nu}
\nabla\nu(x)-\nu(x)\kappa(x)=F(x),
\end{equation}
and the divergence-free condition on $F$ implies
\begin{equation} \label{kr}
\Delta\nu(x)-\nabla\cd(\nu(x)\kappa(x))=0.
\end{equation}
Denote by $L(e)$ the operator $\Delta+\kappa(x;e)\cd\nabla$, and $L^*(e)$ its adjoint -- these operators will play an important role in the future analysis. Then $\Delta \nu-\nabla\cd(\nu\kappa(x;e'))=L^*(e')\nu$. Now
%The principal eigenvalue of the adjoint $\Delta+\kappa(x)\cd\nabla$ of the periodic operator in \eqref{kr} is $0$, 
the principal eigenvalue of $L$ is $0$, with corresponding eigenvector $1$, so Krein-Rutman theorem (see \cite{kr}) implies that $\nu(x)>0$ exists as the principal eigenvector of $L^*$, and it's unique up to a multiplicative factor. The normalization \eqref{normalized} assures further that the solution is unique.

Define $F(x)$ as in \eqref{nu}. The divergence-free condition is clear from \eqref{kr}, so all we need to evaluate is $\int F(x)\,dx=-\int\nu(x;e')\kappa(x;e')\,dx$.\\

Extend all the functions of $e\in\sn$ to all $e\in\rn$: let $e\in\rn\backslash\{0\}$. 
\begin{equation*}%exp sol
\e{-\lambda(x\cd e-c(e,\lambda)t)}\psi(x;e,\lambda)=\e{-\lambda||e||(x\cd\frac{e}{||e||}-\frac{c(e,\lambda)}{||e||}t)}\psi(x;e,\lambda),
\end{equation*}
solves the linearized equation, and so does
\begin{equation*} %exp sol for \sn
\e{-\lambda||e||(x\cd\frac{e}{||e||}-c(\frac{e}{||e||},\lambda||e||)t)}\psi(x;\frac{e}{||e||},\lambda||e||),
\end{equation*}
so we can define
\begin{equation*} %\psi, c extensions
\psi(x;e,\lambda)=\psi(x;\frac{e}{||e||},\lambda||e||), \quad c(e,\lambda)=||e||c(\frac{e}{||e||},\lambda||e||).
\end{equation*}
$\psi(x;e,\lambda)$ solves 
\begin{equation}\label{psiinrn} %\psi eq
\Delta\psi-2\lambda e\cd\nabla\psi+(\mu(x)+(\lambda||e||)^2)\psi=\gamma(e,\lambda)\psi.
\end{equation}
\begin{equation*} %\gamma extension
\gamma(e,\lambda)=\lambda c(e,\lambda)=\lambda ||e||c(\frac{e}{||e||},\lambda||e||)=\gamma(\frac{e}{||e||},\lambda||e||).
\end{equation*}
\begin{equation*} %c^*, \lambda^* extension
\cs(e)=\inf\limits_{\lambda>0}\frac{\gamma(e,\lambda)}{\lambda}=||e||\cs(\frac{e}{||e||}), \quad \text{achieved at} \quad \ls(e)=\frac{1}{||e||}\ls(\frac{e}{||e||}).
\end{equation*}
\begin{equation*} %w^* extension
\ws(e)=\inf\limits_{e\cd\bar{e}>0}\frac{\cs(\bar{e})}{e\cd\bar{e}}=\inf\limits_{e\cd \bar{e}>0}\frac{||\bar{e}||\cs(\frac{\bar{e}}{||\bar{e}||})}{||e||\frac{e}{||e||}\cd||\bar{e}||\frac{\bar{e}}{||\bar{e}||}}=\frac{1}{||e||}\ws(\frac{\bar{e}}{||\bar{e}||}), \quad \text{achieved at} \quad e'=(\frac{e}{||e||})'.
\end{equation*}
First differentiate \eqref{psiinrn} in $e_j$ and evaluate it at $\lambda=\ls(e)$. Calling $\Psi_j(x,e)=\left.\frac{\partial}{\partial e_j}\psi(x;e,\lambda)\right| _{\lambda=\ls(e)}$, $\Gamma_j(e)=\left.\frac{\partial}{\partial e_j}\gamma(e,\lambda)\right| _{\lambda=\ls(e)}$, where $e_1,\ldots,e_n$ are the standard unit vectors in $\rn$, we obtain
\begin{equation*} %eq for \Psi_j
\Delta\Psi_j-2\ls(e)e_j\cd\nabla\psi-2\ls(e)e\cd\nabla\Psi_j+ 2(\ls(e))^2e\cd e_j \psi +(\mu(x)+(\ls(e))^2||e||^2)\Psi_j=\Gamma_j\psi+\gamma(e,\ls(e))\Psi_j.
\end{equation*}
Define the functions $\chi_j(x;e):=\Psi_j(x;e)/\psi(x;e,\ls(e))$. Plugging this into the above equation and combining it with the equation for $\psi$, we get
\begin{equation*} %\Psi_j=\chi_j\psi
\Delta\chi_j+2\frac{\nabla\psi}{\psi}\cd\nabla\chi_j-2\ls(e)e\cd\nabla\chi_j-2\ls(e)e_j\cd\frac{\nabla\psi}{\psi}+2(\ls(e))^2e\cd e_j=\Gamma_j.
\end{equation*}
This can be expressed more compactly as
\begin{equation} \label{chicomp}
L(e)\chi_j =\ls(e)\kappa(x;e) \cd e_j+\Gamma_j(e)
\end{equation}
This equality is true for every vector $e$, in particular for minimizers $e'$, so if we multiply it by $\nu(x)$ and integrate in the torus, we get
\begin{equation*} %int eq for \chi
\int\limits_{\mathbb{T}^n}\nu(x;e')L(e')\chi_j(x;e')\,dx=\ls(e')\left(\int\limits_{\tn}\nu(x;e')\kappa(x;e')\,dx\right)\cd e_j+\Gamma_j(e'),
\end{equation*}
as $\int\nu=1$. But the left hand side is $0$ since $L^*\nu=0$, so to evaluate the integral we only need $\Gamma_j(e')$.\\
Now $\cs(e)=\frac{\gamma(e,\ls(e))}{\ls(e)}$, so differentiating this along $e_j$ gives
\begin{equation*} %der of c^*
\frac{\partial}{\partial_{e_j}}\cs(e)
=\frac{\Gamma_j(e)}{\ls(e)}+\left(\frac{\partial\gamma(e,\ls(e))/\partial\lambda}{\ls(e)}-\frac{\gamma(e,\ls(e))}{(\ls)^2}\right)\frac{\partial}{\partial_{e_j}}\ls(e)
=\frac{\Gamma_j(e)}{\ls(e)}+\frac{\partial}{\partial_{\lambda}}\frac{\gamma(e,\ls(e))}{\ls(e)}\cd\frac{\partial}{\partial_{e_j}}\ls(e).
\end{equation*}
On the other hand, the expression in the formula for $\cs(e)$ achieves its minimum at $\ls(e)$, therefore its derivative with respect to $\lambda$ vanishes there. This makes the second term in the right hand-side $0$, so
\begin{equation*} %eq for \Gamma_j
\Gamma_j(e)=\ls(e)\frac{\partial}{\partial_{e_j}}\cs(e).
\end{equation*}
Finally, $\ws(e)=\min_{e\cd\bar{e}>0}\frac{\cs(\bar{e})}{e\cd\bar{e}}$, with the minimum achieved at $e'$, so $\nabla_{\bar{e}}(\frac{\cs(\bar{e})}{e\cd\bar{e}})$ must vanish at $e'$. Carrying out the differentiation we find that $\nabla \cs(e')=\frac{\cs(e')}{e\cd e'}e=\ws(e)e$, and hence $\Gamma(e')=\ls(e')\ws(e)e$. Going back to the original problem,
\begin{equation*} %int of F
\int F(x)\,dx=-\int\nu(x;e')\kappa(x;e')\,dx=\frac{1}{\ls(e')}\Gamma(e')=\ws(e)e.
\end{equation*}
\end{proof}

\noindent  Note that now that we know $\Gamma_j$, \eqref{chicomp} gives the following equation
\begin{equation} \label{chi}
L(e')\chi(x;e')=\ls(e')\kappa(x;e')+\ls(e')\ws(e)e,
\end{equation}
which will be important again later in the proofs.
%----------------------------------------------------------------------------

\subsection{Proof of Theorem \ref{unique}}

\textbf{Uniqueness of minimizer.} Let $e'$ be a minimizer for $e$. From the computations above,
\begin{equation} \label{gamma.at.min}
\nabla\gamma(e')=\Gamma(e')=\ls(e')\ws(e)e.
\end{equation}
Let $e^\perp$ be a vector orthogonal to $e$. Then for any minimizer $e'$,
$$\partial_{e^\perp}\gamma(e')=\nabla\gamma(e')\cd e^\perp=0.$$
If there are two minimizers $e_1$ and $e_2$ for $e$, then 
$$\partial_{e^\perp}\gamma(e_1)=\partial_{e^\perp}\gamma(e_2)=0.$$
%($e^\perp$ is 'independent' of $e'$).
%If $e_1, e_2\in\sn$  are both minimizers for $e\in\sn$, from the computations above,
%$$\Gamma(e_1)=\ls(e_1)\ws(e)e, \quad \Gamma(e_2)=\ls(e_1)\ws(e)e \quad \implies \quad \Gamma(e_1)\parallel\Gamma(e_2).$$
It is easy to show that $\gamma$ is convex and super-linear at infinity, so $\partial_{e^\perp}\gamma=0$ must hold in the segment joining $e_1$ and $e_2$. But $\gamma$ is analytic in $e$, so it's Hessian can not have a $0$ determinant along a segment unless $\partial_{e^\perp}\gamma\equiv 0$, which contradicts the super-linear growth condition for $\gamma$. Hence $e_1=e_2$.\\

\noindent\textbf{Every direction is a minimizer.} Let $\bar{e}\in\sn$ be any direction. $\nabla\gamma(\bar{e})$ will be parallel to some unit direction, say $e$, with $e\cd\bar{e}>0$. If $e'$ minimizes $e$, then \eqref{gamma.at.min} holds, so $\nabla\gamma$ are parallel for $\bar{e}$ and $e'$. Then the same argument as the uniqueness one above gives $\bar{e}=e'$, so $\bar{e}$ is the minimizer for $e$.\\

\noindent$\mathbf{e'}$\textbf{ minimizes at most one }$\mathbf{e.}$ This follows from an elementary geometric argument. Indeed, let $e_1$ and $e_2$ have $e'$ as a minimizer, and let $OE_1,OE_2, OE'$ be the segments centered at the origin, of length $\ws(e_1), \ws(e_2), \cs(e')$, in the direction of $e_1, e_2, e'$ respectively (see figure \ref{min}). $O, E, E_i$ lie in a sphere $\Gamma_i$ centered at the middle of $OE_i$. Since
$$\ws(e_i)=\frac{\cs(e')}{e'\cd e_i}\le\frac{\cs(e)}{e\cd e_i}$$
for every $e\in\sn$, the segment of length $\cs(e)$ along $e$ will be outside the sphere $\Gamma_i$. This means that $\cs(e)$ lies outside of two different spheres $\Gamma_1$ and $\Gamma_2$ that intersect at $E'$, and it coincides with the intersection at $e'$ ($\cs(e')=OE'$), and hence, it can not be Lipschitz.

\begin{figure}[h]
\centering
\includegraphics[width=0.45\textwidth]{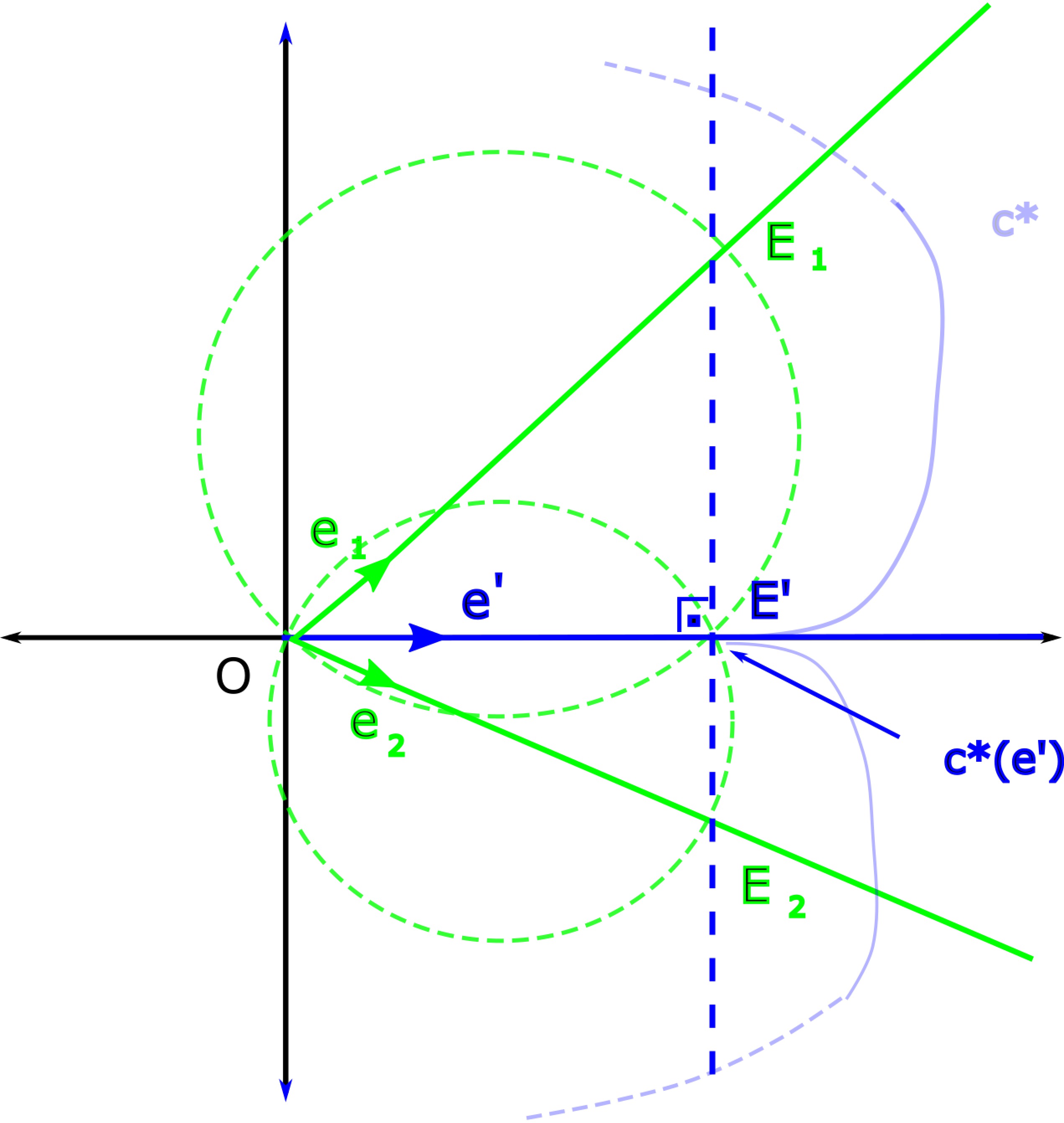}
\caption[b]{\footnotesize 
The surface $\cs:\sn\to\R^+$ lies outside of the spheres determined by $O, E_i$ and $E'$, and equals $|OE'|$ at $e'$, therefore it can not be Lipschitz at that point.
}
\label{min}
\end{figure}
% So let $e^\perp$ be any vector orthogonal to $e$, and consider the function
%$$f(v)=\Gamma(v)\cd e^\perp=\partial_{e^\perp}$$

%which means that $\Gamma=\nabla_e\gamma$ should be constant in the segment connecting $e_1$ and $e_2$. This means that the Hessian has $0$ determinant along that segment. But by analyticity of $\gamma$, this should be true everywhere. In particular, $\bar{e}\cdot\nabla\gamma=constant$, where $\bar{e}=e_2-e_1$. This contradicts 
%$$\lim\limits_{||e||\to\infty}\frac{\gamma(e)}{||e||}=+\infty.$$
%Hence, $e_1=e_2$, i.e. $e'$ is unique.
%-----------------------------------------------------------------------

\section{Asymptotics of the linearized equation: the main results}\label{asymptotics}

The proof of Theorem \ref{main} relies on the asymptotics of the Dirichlet problem for the linearized equation
\begin{equation*}
v_t=\Delta v+\mu(x)v
\end{equation*}
in an appropriately moving half-space. Because the main blocking of propagation in the direction $e$ comes from the minimizer $e'$, we look at the problem in the half-space moving along $e'$, first with speed $\cs(e')t$, and then in a slowed down frame $\cs(e')t-\alpha\log t$ with an appropriately chosen $\alpha$. We state the related results below here but postpone their proofs till Sections \ref{linearspace} and \ref{logspace} respectively.

\subsection{Linearly moving half-space}
So let $v(t,x):=v(t,x;e')$ solve
\eal{linearkilling}{ll}{
v_t=\Delta v + \mu(x)v, \quad & x\cd e'>\cs(e')t, \\
v(t,x)=0, \quad & x\cd e'=\cs(e')t, \\
v(0,x)=v_0(x), \quad & x\cd e'>0,
}
with $v_0\in C_c(\{x\cd e'>0\})$ non-negative and $v_0\not\equiv 0$. %We expect for $v(t,x)$ at distances far away from the boundary to not feel its effect much and behave like the exponential solution of \eqref{linear}, therefore we express it as a perturbation of such solutions
To avoid the potential growth coming from the lowest order term, we express $v$ as a perturbation of the exponential solutions just as in \eqref{perturbed}:
\begin{equation}\label{perburbation}
v(t,x)=\e{-\ls(e')(x\cd e'-\cs(e')t)}\psi(x)p(t,x)
\end{equation}
Following the result of Lemma \ref{selfadj}, $p$ satisfies
\eal{p.linear}{ll}{
p_t=\frac{1}{\nu(x)}\nabla\cd(\nu(x)\nabla p) - \frac{F(x)}{\nu(x)}\cd \nabla p, \quad & x\cd e'>\cs(e')t, \\
p(t,x)=0, \quad & x\cd e'=\cs(e')t, \\
p(0,x)=p_0(x)=v_0(x)\e{\ls(e')x\cd e'}\psi(x)^{-1}, \quad & x\cd e'>0,
}
Solutions of this equation satisfy the following bounds:

%----------------------------------------------------------------------------
\begin{prop}\label{linearbounds}
Let $p(t,x;e')$ solve \eqref{p.linear}. There exist constants $C_u, C_l, \sigma, \rho$ and $T_0$ depending only on $v_0$ such that $p$ satisfies the upper bound
\begin{equation*}
|p(t,\ws(e)t\,e+x)|\le C_u\frac{x\cd e'}{t^{\frac{n}{2}+1}}\int\limits_{x\cd e'\ge 0}x\cd e'p_0(x)\,dx \quad \text{ for all } t\ge T_0,
\end{equation*}
and the lower bound
\begin{equation*}
|p(t,\ws(e)t\,e + \sigma \sqrt{t}\,e' +x)|\ge C_l\frac{1}{t^{\frac{n+1}{2}}} \quad \text{ for all } x\in B_{\rho\sqrt{t}}(0) %\text{ such that } x\cd e'=0, \text{ and } t\ge T_0.
\text{ and } t\ge T_0.
\end{equation*}
\end{prop}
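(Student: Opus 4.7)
My plan is to compare $p$, in an appropriate moving frame, with the Dirichlet heat kernel on a half-space. The bounds claimed are exactly what one expects from the half-space heat kernel: the prefactor $t^{-(n/2+1)}$ together with the linear factor $x\cd e'$ is the method-of-images correction, and the $t^{-(n+1)/2}$ at a point $\sigma\sqrt{t}$ away from the boundary is that same quantity evaluated at $x\cd e' = \sigma\sqrt{t}$. The geometric fact that makes everything consistent is $\ws(e)\,e\cd e' = \cs(e')$ (Definition \ref{min.def}): in the $x$-frame the centre of the surviving mass $\ws(e)t\,e$ sits exactly on the moving Dirichlet boundary $\{x\cd e' = \cs(e')t\}$, and the relative drift $\ws(e)\,e - \cs(e')\,e'$ is parallel to it.

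First I would freeze the boundary by setting $q(t,y) := p(t,\;y + \cs(e')t\,e')$, so the domain becomes the fixed half-space $\{y\cd e'>0\}$ and $q$ satisfies a uniformly parabolic divergence-form equation with coefficients periodic in $y + \cs(e')t\,e'$, together with an extra constant transport term $\cs(e')\,e'\cd\nabla q$ coming from the change of frame. Using the corrector $\chi(x;e')$ from \eqref{chi} and the invariant density $\nu$ from Lemma \ref{nu.existance}, I would carry out a two-scale expansion
\[
Q(t,y) := q(t,y) + \chi(y+\cs(e')t\,e')\cd\nabla q(t,y) + \ldots
\]
and verify that the leading homogenised problem on $\{y\cd e' > 0\}$ is the constant-coefficient heat equation
\[
Q_t \;=\; A_{\mathrm{hom}} : D^{2}Q - b\cd\nabla Q, \qquad Q|_{\,y\cd e'=0} = 0,
\]
with $A_{\mathrm{hom}}$ positive definite (computable from $\nu$ and $\chi$) and drift $b = \ws(e)\,e - \cs(e')\,e'$ parallel to the boundary -- this second identity being exactly the content $\int F = \ws(e)\,e$ of Lemma \ref{nu.existance}. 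The Dirichlet Green's function of this limit problem is given by the method of images, and in the regime $|y-bt|,\,|z|\ll\sqrt{t}$ it expands as
\[
\Phi_{D}(t,y,z) \;\asymp\; \frac{(y\cd e')(z\cd e')}{t^{n/2+1}}\,\e{-|y - z - bt|^{2}_{A^{-1}}/(Ct)}.
\]
Convolving against $p_0$ and evaluating at $y = bt + x$ (which, after undoing the frame shift, is $p$ at $\ws(e)t\,e + x$, and satisfies $y\cd e' = x\cd e'$) recovers the upper bound, while evaluating at $y = bt + \sigma\sqrt{t}\,e' + \xi$ with $|\xi|\le\rho\sqrt{t}$ gives the lower bound of order $\sigma/t^{(n+1)/2}$. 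The homogenisation error $p - Q$ is of strictly lower order and can be controlled by Schauder or Nash--Aronson estimates for the oscillating problem.

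The main technical obstacle, which I expect will be the heart of Section \ref{linearspace}, is twofold. First, after the shift of frame the coefficients are periodic in $y + \cs(e')t\,e'$ rather than in $y$ for fixed $t$: unless $\cs(e')\,e' \in \mathbb{Z}^{n}$, the equation is \emph{not} $1$-periodic at any given time, so the corrector argument has to be run in a genuinely time-dependent setting with uniform-in-$t$ bounds on the corrector and its gradient. Second, the Dirichlet condition forces a boundary layer of thickness $O(1)$ near $\{y\cd e'=0\}$ that the naive two-scale ansatz does not resolve; reproducing the correct factor $x\cd e'$ in both bounds will require an additional boundary-layer corrector matching the interior expansion to $0$ on the hyperplane. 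Once these are handled, a parabolic boundary Harnack argument transfers the sharp Gaussian bounds from the homogenised problem to $p$ itself.
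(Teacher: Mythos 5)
Your heuristic is the right one --- the paper itself motivates Proposition \ref{linearbounds} by exactly the homogenised half-space picture you describe, and the geometric identities you isolate ($\ws(e)\,e\cd e'=\cs(e')$ so that $\ws(e)t\,e$ sits on the moving boundary, and the residual drift being tangential by \eqref{drift}) are correct and essential. But as a proof the route you propose does not close, and the obstacles you flag at the end are not side issues: they are the entire content of the proposition. The actual argument in Section \ref{linearspace} is quite different. For the upper bound the paper never homogenises; it proves a weighted Nash inequality on the half-space (with weight $x\cd e'$ in the $L^1$ term), differentiates $\int\nu p^2$ in the moving domain, and closes the resulting differential inequality using a \emph{conserved linear moment} $I(t)=\int\nu f p$, where $f$ is a global solution of the backwards adjoint Dirichlet problem with linear growth (Lemma \ref{lemma.backwards}). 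This conservation law is what produces the factor $\int x\cd e'\,p_0$ in the statement, and duality plus $S_t=S_{t/2}\circ S_{t/2}$ then yields the boundary factor $x\cd e'$ and the full power $t^{-(n/2+1)}$, valid for \emph{all} $x$ in the half-space. Your method-of-images expansion is only asserted in the regime $|y-bt|,|z|\ll\sqrt t$, so it cannot give the global upper bound; and without an analogue of $f$ and of $I(t)=I(0)$ you have no mechanism forcing the stated normalisation constant. The claim that the homogenisation error is ``of strictly lower order and controlled by Schauder or Nash--Aronson estimates'' is precisely what needs proof: near the boundary the two-scale ansatz has an $O(1)$ relative error unless the boundary-layer corrector is actually constructed, and that construction (the paper does a version of it, but only on cylinders of diameter $O(\sqrt t)$ with the error killed by the ABP estimate, in Section \ref{logspace}) is not easier than the moment argument it would replace.

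The lower bound has a second, independent gap. Evaluating a homogenised Green's function at $y=bt+\sigma\sqrt t\,e'+\xi$ presupposes both a pointwise \emph{lower} bound on the Dirichlet heat kernel of the oscillating, moving-domain problem away from the boundary, and a guarantee that at an intermediate time a definite fraction of the (conserved) linear moment of $p$ is carried by a set of measure $\gtrsim t^{n/2}$ located at distance $\asymp\sqrt t$ from the boundary. The paper supplies the first via a Fabes--Stroock argument built on Norris's whole-space Gaussian bounds (Lemma \ref{heat.kernel.bounds}), and the second via exponential moments $I_\alpha,V_\alpha,D_\alpha$ controlled by the eigenfunction construction of Lemma \ref{lemma.eta} and a Nash inequality lifted to $\mathbb R^{n+2}$ (Proposition \ref{lower.estimate}); the exponential moments are needed to rule out the mass escaping to spatial infinity or collapsing onto the Dirichlet boundary. ``A parabolic boundary Harnack argument transfers the sharp Gaussian bounds'' does not substitute for this: Harnack chains over distances of order $\sqrt t$ degrade constants geometrically unless one already has the kernel bounds one is trying to prove. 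To repair your proposal you would need, at a minimum, (i) a construction of the linear-growth adjoint solution $f$ and the conservation of $I(t)$, (ii) a global-in-space error estimate for the corrector expansion including the boundary layer, and (iii) a quantitative localisation of mass at scale $\sqrt t$; at that point you would essentially have reproduced the paper's proof.
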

%----------------------------------------------------------------------------

Note that because we are interested in what happens while moving along $e$ with speed $\ws(e)$, we choose to observe the solution in a reference frame centered at the tip of $\ws(e)e$. Since $\psi$ is uniformly bounded above and below by positive constants,
\begin{equation*}
v(t,\ws(e)t\,e+x)\sim\e{-\ls(e')x\cd e'}p(t,\ws(e)t\,e+x).
\end{equation*}
Hence the proposition implies that $v$ in this reference frame decays exponentially at a rate proportional to the distance of the point to the boundary, while staying of $O\left(\frac{1}{t^{\frac{n+1}{2}}}\e{-\ls(e')\sigma\, e\cd e'\,\sqrt{t}}\right)$ when $x$ is in the $n-1$ dimensional ball $B(\sigma \sqrt{t}\,e,\rho\sqrt{t})$ perpendicular to $e'$, which we'll denote by $B_{\perp}(\sigma\sqrt{t}\,e,\sqrt{t}; e')$.

\begin{figure}[h]
\centering
%\includegraphics[width=0.38\textwidth]{cw2} 
%\hspace*{0.1cm}
\includegraphics[width=0.55\textwidth]{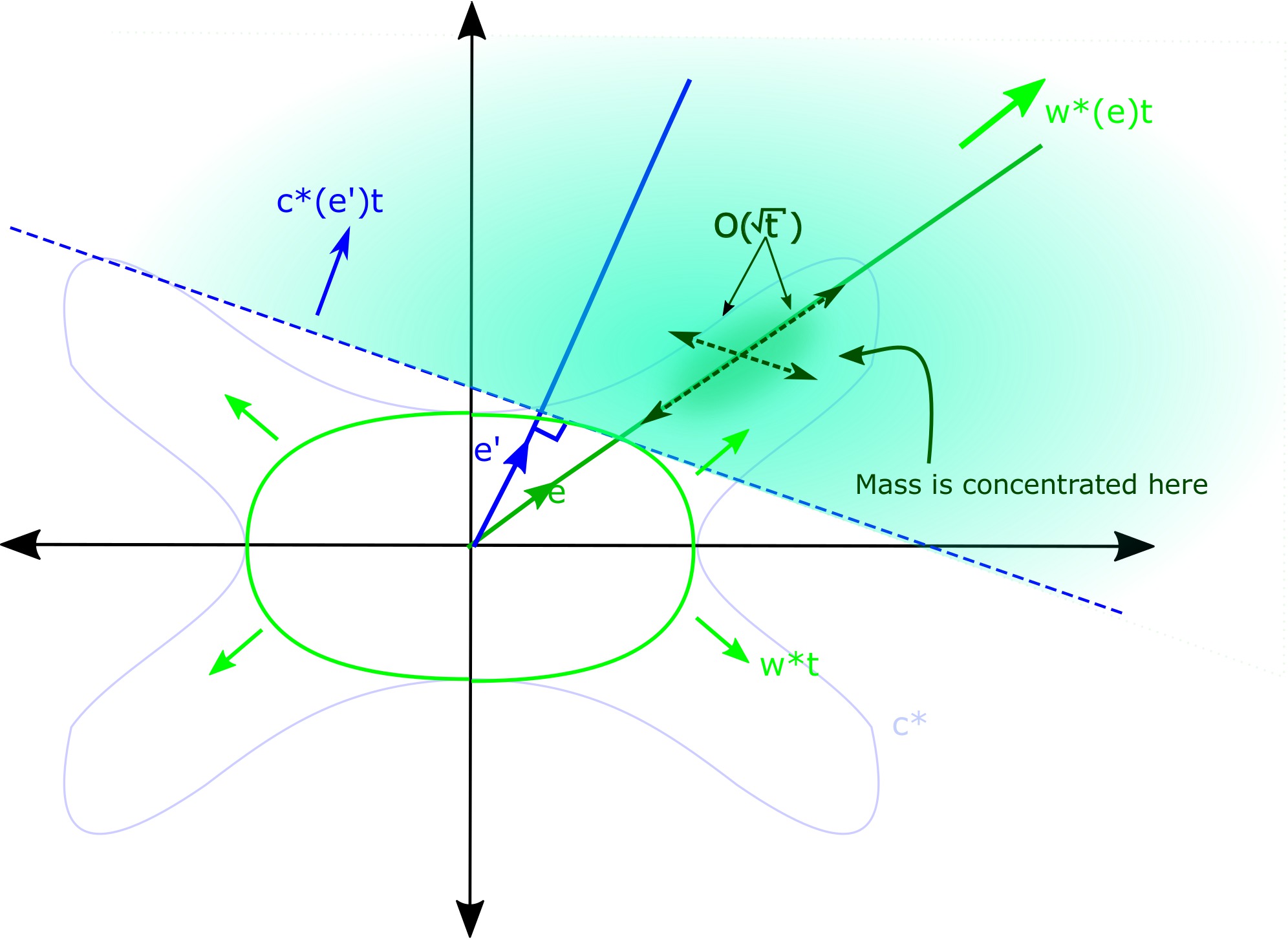}
\caption[b]{\footnotesize %\textbf{(a)} The true propagation speed $\ws$ may be much lower than the traveling front speed $\cs$. 
\textbf{(b)} In the problem with a Dirichlet boundary moving linearly in the direction of $e'$, the mass is concentrated in a region of diameter $\sqrt{t}$ along $e$.}
\label{fig2}
\end{figure}

To see how these bounds come about, we can think of the homogenized equation with $\nu(x)=1, F(x)=\ws(e)e$ and $p$ solving
\ea{ll}{
p_t=\Delta p - \ws(e)e\cd \nabla p, \quad & x\cd e'\ge \cs(e')t, \\
p(t,x)=0, \quad & x\cd e'= \cs(e')t,\\
p_0 \text{ compactly supported}, & x\cd e'>0.
}
Fixing the frame by $q(t,x):=p(t,\ws(e)t\,e+x)$ results in the heat equation in a half space
\ea{ll}{
q_t=\Delta q, \quad & x\cd e'>0, \\
q(t,x)=0, \quad & x\cd e'=0,\\
q_0 \text{ compactly supported}, & x\cd e'>0,
}
whose asymptotics can be computed easily to give
\begin{equation*}
p(t,\ws(e)t\,e+x)=q(t,x)\sim \frac{x\cd e'}{t^{\frac{n}{2}+1}}\e{-\frac{|x|^2}{4t}}.
\end{equation*}
From here we immediately notice the upper bound in the whole space and the lower bound on positions of size $\sqrt{t}e'$.

\subsubsection{Implications for $v$}

Using these bounds in 
\begin{equation*}
v(t,\ws(e)t\,e+x)=\e{-\ls(e')x\cd e'}\psi(\ws(e)t\,e+x)p(t,\ws(e)t\,e+x),
\end{equation*}
we get that 
\begin{equation*}
v(t,\ws(e)t\,e+x)\le C\frac{x\cd e'}{t^{\frac{n}{2}+1}}\e{-\ls(e')x\cd e'}
\end{equation*}
for the upper bound, and
\begin{equation*}
v(t,\ws(e)t\,e+x)\ge C'\frac{1}{t^{\frac{n+1}{2}}}\e{-\ls(e')\sigma\sqrt{t}}
\end{equation*}
at positions around $x\sim \sigma\sqrt{t}e'\,e$. Hence the $L^\infty$ norm of $v$ decays algebraically in time, with a portion of the mass accumulated in front of the direction of interest $e$. However our strategy was to compare the solution $u$ of \eqref{kpp} to $v$ solving the linear equation \eqref{linearkilling}, and the decay of $v$ makes it futile in construction of sub and super-solutions for $u$. In the following section we correct for the over-absorption by devising a new moving frame that is shifted logarithmically by $\alpha$, with $\alpha$ chosen such that the solutions to the new problem remain bounded by from above and below by a constant close to the boundary.

\subsection{Logarithmically delayed half-space}

\noindent Consider the equation
\eal{logkilling}{ll}{
\ds w_t=\Delta w+\mu(x)w, & x\cd e'>\cs(e')t-\alpha\log(\frac{t+T}{T}),\ t>0, \\
\ds w(t,x)=0, & x\cd e'=\cs(e')t-\alpha\log(\frac{t+T}{T}),\ t>0, \\
\ds w(0,x)=w_0(x), & x\cd e'>0,
}
with $w_0$ continuous, compactly supported, non-negative and $w_0\not\equiv 0$. Because the boundary here is quite complicated, it is convenient to pass first to logarithmically shifted new time variable that shifts the boundary to $\{x\cd e'>\cs(e')t\}$. So we start first by studying the asymptotic connection of this time variable to $t$, as well as the form of the resulting new linearized equation. We then write this function as a perturbation of the global exponential solution along $e'$, like we did in Section \ref{linearspace}. Here we notice that the resulting equation differs from \eqref{p.linear} by just an $O(\frac{1}{t})$ term in the coefficient of the time derivative, which allows us to adapt some of the earlier proofs in this setting. The main results are the estimates close to the boundary in Proposition \ref{p.bar.est}.

\subsubsection{Log-shifted time variable}\label{log.var}
In order to simplify the moving rate of the boundary and to be able to use the results from the previous chapter, we make the following convenient change of variables: let
\begin{equation*}
\bar{w}(t,x):=w(a(t),x),
\end{equation*}
where $a(t)$ is such that 
\begin{equation*}
\cs(e')a(t)-\alpha\log\left(\frac{a(t)+T}{T}\right):=\cs(e')t,
\end{equation*}
i.e.
\begin{equation}\label{varchange}
t=a(t)-\frac{\alpha}{\cs(e')}\log\left(\frac{a(t)+T}{T}\right).
\end{equation}
This function exists and is monotonic for $T$ large enough by the implicit function theorem, so $a'(t)>0$ for all $t$. The equation for the new function $\bar{w}$ then becomes
\eal{newvar}{ll}{
\frac{1}{a'(t)}\bar{w}_t=\Delta \bar{w}+\mu(x)\bar{w}, & x\cd e'>\cs(e')t,\ t>0, \\
\ds \bar{w}(t,x)=0, & x\cd e'=\cs(e')t,\ t>0, \\
\ds \bar{w}(0,x)=w_0(x), & x\cd e'>0.
}
Next express $\bar{w}$ in terms of the exponential solution along $e'$ as
\begin{equation*}
\bar{w}(t,x)=b(t)\e{-\ls(e')(x\cd e'-\cs(e')t)}\psi(x;e',\ls(e'))\bar{p}(t,x),
\end{equation*}
with $b(t)>0$ to be determined later. The equation for $\bar{p}$ becomes
\begin{equation}\label{pbar}
\ds\frac{1}{a'(t)}\bar{p}_t=\Delta\bar{p}+\kappa(x)\cd\nabla\bar{p}
%\frac{1}{\nu(x)}\nabla\cd(\nu(x)\cd\bar{p})-\frac{F(x)}{\nu(x)}\cd\nabla\bar{p}
+\left(\ls(e')\cs(e')\left(1-\frac{1}{a'(t)}\right)-\frac{1}{a'(t)}\frac{b'(t)}{b(t)} \right)\bar{p}
\end{equation}
for $x\cd e'>\cs(e')t$, and $\bar{p}(t,x)=0$ when $x\cd e'=\cs(e')t$. The function $$\kappa(x)=\kappa(x;e')=2\left(\frac{\nabla \psi}{\psi}-2\ls(e')e'\right)$$
is defined in \eqref{kappa}. We will choose $b(t)$ in a way that eliminates the lower order term in \eqref{pbar}. For this we need to understand $a'(t)$, so differentiate \eqref{varchange} in $t$ to obtain
\begin{equation*}
\alpha'(t)=1+\frac{\alpha}{\cs(e')(a(t)+T)-\alpha}.
\end{equation*}
Then
\begin{equation*}
\frac{b'(t)}{b(t)}=(a'(t)-1)\ls(e')\cs(e')=\frac{\alpha\ls(e')\cs(e')}{\cs(e')(a(t)+T)-\alpha}=\frac{\alpha\ls(e')}{t+T}-\frac{\alpha^2\lambda(e')}{\cs(e')}\frac{\log(\frac{a(t)+T}{T})-1}{(t+T)(a(t)+T-\frac{\alpha}{\cs(e')})},
\end{equation*}
and since $a(t)=t+O(\log t)$, 
\begin{equation*}
\frac{b'(t)}{b(t)}=\frac{\alpha\ls(e')}{t+T}+O\left(\frac{1}{(t+T)^{3/2}}\right).
\end{equation*}
This implies that 
\begin{equation}\label{b-asymptotics} %asymptotics for b(t)
b(t)=\e{\alpha\ls(e')\log(t+T)+O(t^{-1/2})}=(t+T)^{\alpha\ls(e')}(1+O(t^{-1/2})).
\end{equation}
On the other hand,
\begin{equation*}
\ds\frac{1}{a'(t)} =\frac{\cs(e')(a(t)+T)-\alpha}{\cs(e')(a(t)+T)}=1-\beta(t),
\end{equation*}
where
\begin{equation*} %exact expression for \beta(t)
\begin{array}{ll}
\ds\beta(t) &\ds =\frac{\alpha}{\cs(e')(a(t)+T)} =\frac{\alpha}{\cs(e')(t+T)}-\frac{\alpha}{\cs(e')}\frac{a(t)-t}{(t+T)(a(t)+T)}\\ & \ds=\frac{\alpha}{\cs(e')(t+T)}-\frac{\alpha^2}{\cs(e')^2}\frac{\log(a(t)+T)}{(t+T)\left(t+\frac{\alpha}{\cs(e')}\log\left(\frac{a(t)+T}{T}\right)+T\right)}.
\end{array}
\end{equation*}
So $\beta(t)$ satisfies
\begin{equation} \label{beta.asymp}
\beta(t)=\frac{\alpha}{\cs(e')t}+O\left(\frac{1}{t^{3/2}} \right)\ \text{ as } \ t\to\infty, \quad \text{ and } \ |\beta(t)|\le\frac{C}{t}, \ |\beta'(t)|\le\frac{C}{t^2} \ \text{ for } t\ge T_0.
\end{equation}
In the other hand, since $a(t)\ge 0$ we may choose $T$ large enough so that $|\beta(t)|\le\frac{1}{2}$ for all $t$. With these asymptotics for $1/a'(t)$ $\bar{p}$ turns out to satisfy similar bounds to those of $p$ in Proposition \ref{linearbounds}. These are stated in the following proposition, whose proof will be postponed to the last section of this chapter.
%
%--------------------------------------------------------------------
\begin{prop}\label{p.bar.est}
Let $\bar{p}$ solve 
\begin{equation}\label{pbareq}\left\{\hspace{20pt} %eq for \bar{p}
\begin{array}{ll}
\ds(1-\beta(t))\bar{p}_t=\Delta\bar{p}+\kappa(x)\cd\nabla\bar{p},
%\frac{1}{\nu(x)}\nabla\cd(\nu(x)\cd\bar{p})-\frac{F(x)}{\nu(x)}\cd\nabla\bar{p},
 & x\cd e'>\cs(e')t, \\
\bar{p}(t,x)=0,  & x\cd e'=\cs(e')t, \\
\bar{p}(0,x)=p_0(x)=w_0(x)\e{\ls(e')x\cd e'}\psi(x)^{-1}, & x\cd e'>0,
\end{array}\right.
\end{equation}
Then there exist positive constants $M$ and $T_0$ such that for all $t\ge T_0$ and $x\in B_{\sigma\sqrt{t}}(0)\cap\{x\cd e'\ge 0\}$,
\begin{equation*}
M^{-1}\frac{x\cd e'}{t^{\frac{n}{2}+1}}\le \bar{p}(t,\ws(e)t\,e+x)\le M\frac{x\cd e'}{t^{\frac{n}{2}+1}}.
\end{equation*}
\end{prop}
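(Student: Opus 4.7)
Equation \eqref{pbareq} for $\bar p$ differs from \eqref{p.linear} for $p$ only by the factor $1-\beta(t)$ in front of the time derivative, with $|\beta(t)|\le C/t$ and $|\beta'(t)|\le C/t^2$ by \eqref{beta.asymp}. The plan is to treat $\bar p$ as an $O(1/t)$ perturbation of $p$, recover the bounds of Proposition \ref{linearbounds} for $\bar p$ via a time change, and then sharpen the lower bound near the moving Dirichlet boundary by a Hopf-type sub-solution argument.

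\textbf{Reduction and upper bound.} First I would set $\tau(t):=\int_{T_0}^t \frac{ds}{1-\beta(s)}$, so that $\tilde p(\tau,x):=\bar p(t(\tau),x)$ solves $\tilde p_\tau=\Delta\tilde p+\kappa(x)\cd\nabla\tilde p$, which is exactly the equation of \eqref{p.linear}. By \eqref{beta.asymp}, $\tau(t)=t+\frac{\alpha}{\cs(e')}\log t+O(1)$, so the Dirichlet boundary $\{x\cd e'=\cs(e')t(\tau)\}$ is only an $O(\log\tau)$ shift of the one in \eqref{p.linear}; since $\log\tau=o(\sqrt{\tau})$, it is negligible on the scale $\sqrt{\tau}$ at which the asymptotic profile $(x\cd e')/\tau^{n/2+1}$ lives. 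Translating out this shift by $\hat p(\tau,y):=\tilde p(\tau,y-\delta(\tau)e')$ with $\delta(\tau)=\cs(e')(\tau-t(\tau))$, the upper bound of Proposition \ref{linearbounds} applies to $\hat p$ up to a constant factor absorbing the integrable $O(1/\tau)$ drift introduced by the translation, and transferring back gives $\bar p(t,\ws(e)te+x)\le M(x\cd e')/t^{n/2+1}$ on the region of interest.

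\textbf{Lower bound.} The same reduction transfers the bulk lower bound of Proposition \ref{linearbounds} to $\bar p(t,\ws(e)te+\sigma\sqrt{t}e'+y)\ge C\,t^{-(n+1)/2}$ for $y\in B_{\rho\sqrt{t}}(0)$ and $t$ large. To upgrade this to the sharp $(x\cd e')/t^{n/2+1}$ estimate uniformly on $B_{\sigma\sqrt{t}}(0)\cap\{x\cd e'\ge 0\}$, I would build a parabolic sub-solution of the form $q(t,x)=c\,t^{-n/2-1}(x\cd e'-\cs(e')t)\,\e{-|x-\ws(e)te|^2/(Kt)}$, possibly multiplied by a periodic correction tied to the cell-problem function $\chi$ from \eqref{chi} or the invariant measure $\nu$ from Lemma \ref{nu.existance} in order to cancel the first-order terms coming from $\kappa(x)$. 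The linear factor makes $q$ vanish on the moving boundary; the Gaussian localizes it; and for $c$ small, $K$ large, $T_0$ large, the bulk lower bound dominates $q$ on $\partial B_{\sigma\sqrt{t}}$ and at $t=T_0$, so the parabolic maximum principle yields $q\le\bar p$ inside.

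\textbf{Main obstacle.} The delicate step is the construction of $q$: it must simultaneously vanish linearly on the moving Dirichlet boundary, decay at the correct rate $t^{-n/2-1}$ in the bulk, and satisfy the sub-solution inequality in presence of both the periodic drift $\kappa(x)$ and the $(1-\beta(t))$-perturbation. The heterogeneity of $\kappa$ prevents a naive Gaussian ansatz from working, and I expect to need the explicit correction by $\chi$ (or integration against $\nu$) to absorb the first-order terms; the $\beta$-perturbation is then absorbed using the sharper decay $|\beta'(t)|\le C/t^2$ supplied by \eqref{beta.asymp}.
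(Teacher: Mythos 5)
Your overall architecture (reduce the $\beta$-perturbation to the linear-frame problem, then sharpen the lower bound near the moving boundary with a corrected Gaussian-times-linear comparison function) points in the right direction, but two of its load-bearing steps do not close as stated. First, the reduction. After your time change the boundary sits at $x\cd e'=\cs(e')t(\tau)$ with $t(\tau)=\tau-\frac{\alpha}{\cs(e')}\log\tau+O(1)$, and undoing this by the translation $\delta(\tau)e'$ introduces a drift $-\delta'(\tau)e'\cd\nabla\hat p$ with $\delta'(\tau)=O(1/\tau)$. This is \emph{not} integrable: Gronwall applied naively to the conserved-moment identity produces a factor $\e{C\log\tau}=\tau^{C}$, which destroys the $t^{-(n/2+1)}$ rate. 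The term this drift multiplies involves $\int\nu\hat p$ (the mass, not the first moment), which is not controlled by the first moment near the boundary. The paper confronts exactly this in Lemma \ref{int.bounds.pbar}: it keeps the $(1-\beta(t))$ factor, and shows that the dangerous $O(1/t)$ term multiplies a quantity that is itself $O(t^{-1/4})$, by splitting the half-space at distance $t^{1/4}$ from the boundary and using whole-space heat kernel upper bounds on the inner region. Without an argument of this type your upper bound (and the transferred bulk lower bound) is not established. The same issue reappears for the exponential moments, where Lemma \ref{l1bound} is needed to control $(\eta_\alpha)_t$ and $\zeta_t$.

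Second, the sub-solution. A profile $c\,t^{-n/2-1}(x\cd e'-\cs(e')t)\e{-|x-\ws(e)t\,e|^2/(Kt)}$ corrected only by $\chi$ leaves a residual of order $t^{-(n+4)/2}$; integrated over times of order $t$ (or fed into ABP over a cylinder of diameter $\sigma\sqrt t$) this produces an error of order $t^{-(n+1)/2}$, which swamps the target quantity $x\cd e'/t^{n/2+1}\sim t^{-(n+2)/2}$ precisely in the region $x\cd e'=O(1)$ where the new information lives. This is why Proposition \ref{qfunction} carries the multiple-scale expansion through $v^2$ and $v^3$, pushing the residual to $O(t^{-(n+5)/2})$ so that the ABP bound in Proposition \ref{approxsol} yields $C\sigma^{1/(n+1)}t^{-(n+2)/2}$, absorbable for small $\sigma$. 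Moreover, the comparison cannot be initialized as you propose: on the part of the lateral boundary of the comparison region that lies within $O(\sqrt t)$ of the Dirichlet boundary, neither the bulk lower bound at distance $\sigma\sqrt t$ nor the initial data gives you $\bar p\ge q$. The paper's resolution is geometric: the oblique region $P_t$ is built so that each such lateral boundary point was at distance $\epsilon\sqrt s$ from the boundary at an earlier time $s$ with $t-s\gtrsim\sqrt t$, and the heat kernel lower bound of Lemma \ref{heat.kernel.bounds} (extended to \eqref{pbareq}) propagates the $\sqrt t$-scale lower bound forward to yield $\bar p\ge c\,r/t^{(n+2)/2}$ there; the constants $\chi_0^{\pm}$ are then chosen to make $q^{\pm}$ have a sign on the Dirichlet boundary itself. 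These three ingredients --- the perturbed moment estimates, the third-order expansion, and the oblique-cylinder/heat-kernel initialization --- are the substance of the proof and are absent from the proposal.
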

%--------------------------------------------------------------------

\subsubsection{Implications for $w$: the correct log shift}
We will now use this proposition to infer about the bounds of $w$ at locations of order $\ws(e)t-\frac{\alpha}{\ls(e')e\cd e'}\log t$. Going back to the original variables, using the explicit form of $\bar{w}, a(t)$ and $b(t)$,
\begin{equation*}
\begin{array}{l}
w(t,x)\ds=\bar{w}(a^{-1}(t),x)=\bar{w}\left(t-\frac{\alpha}{\cs(e')}\log\left(\frac{t+T}{T}\right),x \right)\\
\ds=(t+T)^{\alpha\ls(e')}\left(1+O\left(\frac{1}{\sqrt{t}}\right)\right) \e{-\ls(e')\left(x\cd e'-\cs(e')t+\alpha\log\left(\frac{t+T}{T}\right)\right)} \psi(x) \bar{p}\left(t-\frac{\alpha}{\cs(e')}\log\left(\frac{t+T}{T}\right),x\right)\\
\ds=\left(1+O\left(\frac{1}{\sqrt{t}}\right)\right) \e{-\ls(e')\left(x\cd e'-\cs(e')t\right)} \bar{p}\left(t-\frac{\alpha}{\cs(e')}\log\left(\frac{t+T}{T}\right),x\right),
\end{array}
\end{equation*}
where we used the fact that $\psi(x)$ is uniformly positive. Increasing $M$ if necessary, Proposition \ref{p.bar.est} gives the following estimates for $\bar{p}$ at the logarithmically shifted time $t-\frac{\alpha}{\cs(e')}\log\left(\frac{t+T}{T}\right)$:
\begin{equation*}
M^{-1}\frac{x\cd e'}{(t+T)^{\frac{n}{2}+1}}\le \bar{p}\left(t-\frac{\alpha}{\cs(e')}\log\left(\frac{t+T}{T}\right),\left(\ws(e)t-\frac{\alpha}{e\cd e'}\log\left(\frac{t+T}{T}\right)\right)\,e+x\right)\le M\frac{x\cd e'}{(t+T)^{\frac{n}{2}+1}},
\end{equation*}
for $t\ge T_0$ and $x\in B_{\sigma\sqrt{t}/2}(0)\cap\{x\cd e'\ge 0\}$. Increasing $M$ even more and shifting $x$ by $L\,e$ to account for $O(1)$ corrections gives 
\begin{equation*}
M^{-1}\frac{x\cd e'}{t^{\frac{n}{2}+1}}\le \bar{p}\left(t-\frac{\alpha}{\cs(e')}\log\left(\frac{t+T}{T}\right),\left(\ws(e)t-\frac{\alpha}{e\cd e'}\log t+L\right)\,e+x\right)\le M\frac{x\cd e'}{t^{\frac{n}{2}+1}}.
\end{equation*}
This implies that at the same locations, $w$ satisfies
\begin{equation*}
\begin{array}{l}
\ds w\left(t,\left(\ws(e)t-\frac{\alpha}{e\cd e'}\log t+L\right)\,e+x\right)\\
\ds\ge C \e{-\ls(e')x\cd e'}\e{-\ls(e')L e\cd e'}\e{\ls(e')\alpha\log t}\bar{p}\left(t-\frac{\alpha}{\cs(e')}\log\left(\frac{t+T}{T}\right),\left(\ws(e)t-\frac{\alpha}{e\cd e'}\log t+L\right)\,e+x\right)\\
\ds\ge c_L\e{-\ls(e')x\cd e'}t^{\ls(e')\alpha-(\frac{n}{2}+1)}x\cd e'\ge c_Lt^{\ls(e')\alpha-(\frac{n}{2}+1)} =c_L
\end{array}
\end{equation*}
when
$$\alpha=\frac{n/2+1}{\ls(e')}$$
for all $x\in B_{\epsilon}(0)$. Similarly for this $\alpha$ and $x$ in the same range,
\begin{equation*}
w\left(t,\left(\ws(e)t-\frac{\alpha}{e\cd e'}\log t+L\right)\,e+x\right)\le C_L.
\end{equation*}
This means that for this particular shift, $w$ remains bounded above and below by a constant at positions of order $1$ from the boundary, in the direction of $e$. The estimate above can be summarized in the following corollary.

%-------------------------------------------------------------------
\begin{cor}\label{order.one}
Given $L>L_0$, there exist $\epsilon>0$ and $T>0$ such that for all $t\ge T$ and $x\in B_{\epsilon\sqrt{t}}(0)$,%$x\in B_\epsilon(0)$,
\begin{equation*}
c_L\le w\left(t,\left(\ws(e)t-\frac{n/2+1}{\ls(e')e\cd e'}\log t+L\right)\,e+x\right)\le C_L.
\end{equation*}
\end{cor}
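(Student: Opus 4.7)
The strategy is essentially to distill the chain of estimates carried out in the discussion immediately preceding the statement: combine the identity relating $w$ to $\bar{p}$ through the time rescaling $a(t)$ with the two-sided bounds of Proposition \ref{p.bar.est}, and choose $\alpha = (n/2+1)/\ls(e')$ so that the algebraic factors in $t$ cancel exactly.

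The first step is to substitute the candidate position $x = \bigl(\ws(e)\,t - \tfrac{\alpha}{e\cd e'}\log t + L\bigr)e + y$ with $y\in B_{\epsilon\sqrt{t}}(0)$ into
\begin{equation*}
w(t,x) = \bigl(1+O(t^{-1/2})\bigr)\, e^{-\ls(e')(x\cd e' - \cs(e')t)}\,\bar{p}\bigl(\bar t,\, x\bigr),
\end{equation*}
where $\bar t = t - \tfrac{\alpha}{\cs(e')}\log\bigl(\tfrac{t+T}{T}\bigr)$. Using $\ws(e)(e\cd e') = \cs(e')$, one rewrites the spatial argument of $\bar p$ as $\ws(e)\bar t\, e + z$ with $z = (L+O(1))e + y$; provided $\epsilon<\sigma/2$ and $t$ is large, $z$ lies in $B_{\sigma\sqrt{\bar t}}(0)\cap\{z\cd e'\ge 0\}$, so Proposition \ref{p.bar.est} applies.

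That proposition yields
\begin{equation*}
M^{-1}\,\frac{z\cd e'}{\bar t^{n/2+1}}\le \bar{p}\bigl(\bar t,\,\ws(e)\bar t\, e+z\bigr)\le M\,\frac{z\cd e'}{\bar t^{n/2+1}}.
\end{equation*}
A direct computation gives $x\cd e' - \cs(e')t = z\cd e' - \alpha\log t + O(1)$, so the prefactor $e^{-\ls(e')(x\cd e'-\cs(e')t)}$ equals $t^{\ls(e')\alpha}\,e^{-\ls(e')\,z\cd e'}\,(1+O(t^{-1/2}))$. Setting $\alpha = (n/2+1)/\ls(e')$ forces $t^{\ls(e')\alpha}\cdot \bar t^{-(n/2+1)}\to 1$ as $t\to\infty$, and the estimate collapses to
\begin{equation*}
c\,(z\cd e')\,e^{-\ls(e')\,z\cd e'}\le w(t,x)\le C\,(z\cd e')\,e^{-\ls(e')\,z\cd e'},
\end{equation*}
with constants depending only on $M$, $\psi$, and the $O(1)$ corrections.

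The main obstacle lies in securing the uniformity in $y$ claimed by the statement. The upper bound comes for free since $s\mapsto s\,e^{-\ls(e')s}$ is bounded above on $[0,\infty)$ by $1/(e\ls(e'))$, so $w\le C_L$ regardless of the size of $z\cd e'$. For the lower bound one must keep $z\cd e'$ inside a compact subinterval of $(0,\infty)$, which is precisely why the threshold $L>L_0$ appears: choosing $L$ large so that $L(e\cd e')$ dominates the fluctuations of $y\cd e'$, and shrinking $\epsilon$ accordingly, ensures $z\cd e'\in[\delta,K]$ for constants $\delta,K>0$ depending on $L$. On that interval $s\,e^{-\ls(e')s}\ge c_L>0$, which gives the desired bound from below.
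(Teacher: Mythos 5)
Your proposal follows essentially the same route as the paper: rewrite $w$ in terms of $\bar{p}$ via the time change $a(t)$ and the factor $b(t)=(t+T)^{\alpha\ls(e')}(1+O(t^{-1/2}))$, apply Proposition \ref{p.bar.est} at the shifted time and at $\ws(e)\bar{t}\,e+z$ using $\ws(e)\,e\cd e'=\cs(e')$, and cancel the algebraic factors by taking $\alpha=(n/2+1)/\ls(e')$. Your explicit handling of the lower bound by confining $z\cd e'$ to a compact subinterval of $(0,\infty)$ is the same device the paper uses implicitly (its displayed computation is for $x\in B_\epsilon(0)$), and the remaining mismatch with the radius $\epsilon\sqrt{t}$ in the corollary's statement is present in the paper's own argument as well, so no new gap is introduced by your proof.
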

%-------------------------------------------------------------------

To finish the proof of the upper bound of the main theorem in the last chapter, we will need a decay estimate $w(t,x)$ in the whole half-space. For this, recall  the heat kernel estimates of Norris in the entire space \eqref{heat.rn}. Applying them to $\bar{p}$ implies that there exists $K>0$ such that for $t\ge 1$,  
\begin{equation*}
\bar{p}(t,x+\ws(e)t\,e)\le \frac{K}{t^{\frac{n}{2}}}\e{-\frac{|x|^2}{Kt}}.
\end{equation*}
This implies for $w$ that 
\begin{equation*}
\begin{array}{l}
\ds w\left(t,\left(\ws(e)t-\frac{n/2+1}{\ls(e')e\cd e'}\log t\right)\,e+x\right)\\
\ds\le C \e{-\ls(e')x\cd e'}\e{\ls(e')\alpha\log t}\bar{p}\left(t-\frac{\alpha}{\cs(e')}\log t,\left(\ws(e)t-\frac{\alpha}{e\cd e'}\log t\right)\,e+x\right)\\
\ds= C\e{-\ls(e')x\cd e'}t^{n/2+1} \bar{p}\left(t-\frac{\frac{n}{2}+1}{\ls(e')\cs(e')}\log t,\left(\ws(e)t-\frac{n/2+1}{\ls(e')e\cd e'}\log t\right)\,e+x\right)\\
\ds=C\e{-\ls(e')x\cd e'}t^{n/2+1} \bar{p}\left(t-\frac{\frac{n}{2}+1}{\ls(e')\cs(e')}\log t,\ws(e)\left(t-\frac{n/2+1}{\ls(e')\cs(e')}\log t\right)\,e+x\right)\\
\ds\le C\e{-\ls(e')x\cd e'}t^{n/2+1} \frac{1}{t^{\frac{n}{2}}}\e{-\frac{|x|^2}{Kt}} =Ct\e{-\ls(e')x\cd e'}\e{-\frac{|x|^2}{Kt}}.
\end{array}
\end{equation*}
So for all $x$ such that $x\cd e'>0$ we have the following Gaussian bounds 
\begin{equation}\label{gaussian.decay}
w\left(t,\left(\ws(e)t-\frac{n/2+1}{\ls(e')e\cd e'}\log t\right)\,e+x\right) \le Ct\e{-\ls(e')x\cd e'}\e{-\frac{|x|^2}{Kt}}.
\end{equation}

\noindent On the other hand, the linear bound 
\begin{equation*}
\bar{p}(t,x+\ws(e)t\,e)\le C\frac{x\cd e'}{t^{\frac{n}{2}+1}}
\end{equation*}
implies
\begin{equation*}
\begin{array}{l}
\ds w\left(t,\left(\ws(e)t-\frac{n/2+1}{\ls(e')e\cd e'}\log t\right)\,e+x\right)\\
\ds\le C\e{-\ls(e')x\cd e'}t^{n/2+1} \bar{p}\left(t-\frac{\frac{n}{2}+1}{\ls(e')\cs(e')}\log t,\ws(e)\left(t-\frac{n/2+1}{\ls(e')\cs(e')}\log t\right)\,e+x\right)\\
\ds\le C\e{-\ls(e')x\cd e'}t^{n/2+1}\frac{x\cd e'}{t^{\frac{n}{2}+1}}= C x\cd e'\e{-\ls(e')x\cd e'}.
\end{array}
\end{equation*}
So again, for all $x$ such that $x\cd e'>0$, $w$ has the exponential decay
\begin{equation}\label{exp.decay}
w\left(t,\left(\ws(e)t-\frac{n/2+1}{\ls(e')e\cd e'}\log t\right)\,e+x\right) \le C x\cd e'\e{-\ls(e')x\cd e'}.
\end{equation}

\section{Proof of the main theorem assuming the linear bounds}\label{mainproof}

%\subsection{Logarithmic correction in Fisher-KPP equation} % \label{subsupersol}
%\section{Introduction and main result}
%\section{Local and global bounds for the linearized equation}
This section is dedicated to proving the result of Theorem \ref{main}. In the first part we construct a sub-solution for $u$ using the solution $w$ to the linearized Dirichlet problem in the logarithmically shifted domain, together with the bounds from Corollary \ref{order.one}. The second part is concerned with the proof of the upper bound. This is in turn more complicated and requires combining the effect of multiple directions simultaneously, therefore the super-solution we construct is the result of adding the solutions of the linearized problem arising from finitely many $e$'s.

\subsection{The lower bound}

We want to show show that fixing $e\in\sn$, for every $\epsilon>0$, there exist $t_0$ and $L$ depending on $\epsilon$ and $e$ such that 
\begin{equation*}
u\ge 1-\epsilon \ \text{ for all } t>t\ \text{ and } x=R\, e,\ R\in \left[0, \ws(e) t - \frac{n/2+1}{\ls(e') e\cd e'}\log t - L  \right].
\end{equation*}
Let the initial data $u_0$ be supported on a ball of radius $R_0$ centered at the origin, and consider the solution $w$ of the linearized Dirichlet problem \eqref{logkilling} on the domain
$$x\cd e'>\cs(e')t-\frac{n/2+1}{\ls(e')}\log\left(\frac{t+T}{T}\right)-R_0,$$
with initial data $w_0=u_0$. Then from chapter \ref{logspace} we deduce that $w$ is uniformly bounded above. Indeed $\bar{p}$ satisfies 
\begin{equation*}
\bar{p}(t,\ws(e)t\,e+x)\le C\frac{x\cd e'}{t^{\frac{n}{2}+1}},
\end{equation*}
so similarly to the proof of Corollary \ref{order.one},
\begin{equation*}
\begin{array}{l}
\ds w\left(t,\left(\ws(e)t-\frac{n/2+1}{\ls(e')e\cd e'}\log t\right)\,e+x\right)\\
\ds\le C\e{-\ls(e')x\cd e'}t^{n/2+1} \bar{p}\left(t-\frac{\frac{n}{2}+1}{\ls(e')\cs(e')}\log t,\left(\ws(e)t-\frac{n/2+1}{\ls(e')e\cd e'}\log t\right)\,e+x\right)\\
\ds=C\e{-\ls(e')x\cd e'}t^{n/2+1} \bar{p}\left(t-\frac{\frac{n}{2}+1}{\ls(e')\cs(e')}\log t,\ws(e)\left(t-\frac{n/2+1}{\ls(e')\cs(e')}\log t\right)\,e+x\right)\\
\ds\le C\e{-\ls(e')x\cd e'}t^{n/2+1} \frac{x\cd e'}{t^{\frac{n}{2}+1}} =Cx\cd e'\e{-\ls(e')x\cd e'}\\
\ds\le C\sup\limits_{x\cd e'>0} x\cd e'\e{-\ls(e')x\cd e'}\le M.
\end{array}
\end{equation*}
Therefore multiplying $w$ by a small constant $a$ if necessary and using the bounds of $f(s)$ for small $s$ we can guarantee that $aw(t,x)$ is a sub-solution for \eqref{kpp} and satisfies $aw(t,x)\le\delta$ for all $t,x$. So
\begin{equation*}
u(t,x)\ge aw(t,x) \quad \text{ for all } t\ge 0, \ x\cd e'>\cs(e')t-\frac{n/2+1}{\ls(e')}\log\left(\frac{t+T}{T}\right)-R_0.
\end{equation*}
From Corollary \ref{order.one},
\begin{equation*}
\begin{array}{c}
\ds u\left(t,\left(\ws(e)t-\frac{n/2+1}{\ls(e')e\cd e'}\log t+L\right)\,e+x\right)\ge aw\left(t,\left(\ws(e)t-\frac{n/2+1}{\ls(e')e\cd e'}\log t+L\right)\,e+x\right)\\
\ds\ge ac_L=\delta'
\end{array}
\end{equation*}
for all $t\ge t_0$ and $x\in B_\epsilon(0)$. \\

Note that by parabolic Harnack inequality, there exists a time $s_0$ such that if $u(s,x)\ge\delta'$ for all $x\in B_\epsilon(0)$, then $u(t,x)\ge 1-\epsilon$ for all $x$ in the same $B_\epsilon(0)$ and $t\ge s+s_0$. Since 
\begin{equation*}
u\left(s,\left(\ws(e)s-\frac{n/2+1}{\ls(e')e\cd e'}\log s+L\right)\,e+x\right)\ge\delta', \quad \forall s\ge t_0, \ x\in B_\epsilon(0),
\end{equation*}
this implies that 
\begin{equation*}
u\left(t,\left(\ws(e)s-\frac{n/2+1}{\ls(e')e\cd e'}\log s+L\right)\,e+x\right)\ge 1-\epsilon, \quad \forall s\ge t_0,\ t\ge s+s_0, \ x\in B_\epsilon(0),
\end{equation*}
or equivalently for all $t_0\le s\le t-t_0$. Hence, $u$ is bounded from below by $1-\epsilon$ for all $x=Re$, where 
\begin{equation*}
\ws(e)t_0-\frac{n/2+1}{\ls(e')e\cd e'}\log t_0+L \le R\le \ws(e)(t-t_0)-\frac{n/2+1}{\ls(e')e\cd e'}\log (t-t_0)+L,
\end{equation*}
for all $t$ larger than some $t_0'>t_0$, chosen such that the quantity on the left in the above equation is always smaller than the quantity on the right. Choosing $L_0$ such that 
\begin{equation*}
\ws(e)t-\frac{n/2+1}{\ls(e')e\cd e'}\log t -L_0\le \ws(e)(t-t_0)-\frac{n/2+1}{\ls(e')e\cd e'}\log (t-t_0)+L,
\end{equation*}
we obtain the lower bound for all $C_0\le R\le \ws(e)t-\frac{n/2+1}{\ls(e')e\cd e'}\log t -L_0$. To extend the result to $R\in[0,C_0]$, note that by parabolic regularity, $u(1,x)\ge\alpha>0$ on $B_{C_0}(0)$, therefore $u(t,x)\ge\alpha>0$ on $B_{C_0}(0)$ for all $t\ge t_1$ for some $t_1\ge 1$. So increasing $t_0$ if necessary to ensure $t_0\ge t_1$, we obtain $u(t,Re)\ge 1-\epsilon$ for $R\in \left[0, \ws(e) t - \frac{n/2+1}{\ls(e') e\cd e'}\log t - L_0 \right]$ for all $t\ge t_0$. This concludes the proof of the lower bound.

\subsection{The upper bound}

In this section we establish the existence of $t_0$ and $L_0$ depending on $\epsilon$ (but not on $e$) such that
\begin{equation*}
u\le \epsilon \ \text{ for all } t>t_0\ \text{ and } x=R\, e,\ R\in \left[\ws(e) t - \frac{n/2+1}{\ls(e') e\cd e'}\log\left(\frac{t+T}{T}\right) + L, \infty \right).
\end{equation*}
The solution $w(t,x)$ of the linearized equation \eqref{logkilling}, extended by $0$ to the whole $\rn$, comprises a super-solution for the equation \eqref{kpp} in the logarithmically shifted half-space. From the bounds in Corollary \ref{order.one}, we have that multiplying it by some large constant $A$,
\begin{equation*}
2\le Aw\left(t,\left(\ws(e)t-\frac{n/2+1}{\ls(e')e\cd e'}\log\left(\frac{t+T}{T}\right)+L\right)\,e+x\right)\le M
\end{equation*}
for all $x\in B_{\epsilon\sqrt{t}}(0)$ %$x\in B_\epsilon(0)$ 
and $t>t_0$. Call this
$$W_e(t,x):=Aw(t,x;e)$$
to indicate the function arising from the direction $e$. Let $\mathcal{W}_t$ be the $n-1$-dimensional convex manifold 
\begin{equation*}
\mathcal{W}_t=\left\{Re: \ e\in\sn, \ R=\ws(e)t-\frac{n/2+1}{\ls(e')e\cd e'}\log\left(\frac{t+T}{T}\right)+L \right\}.
\end{equation*}
The discussion above implies that at any point of $\mathcal{W}_t$, $W_e\ge 2$ for the direction $e$ corresponding to that point, so the hope is to 'patch' all the $W_e$'s together to obtain a function that, when intersected with the constant solution $1$, will produce a super-solution for $u$ in the entire $\rn$. However it is impossible to deal with an infinity of functions all at once, therefore instead we approximate $\mathcal{W}_t$ by a convex polyhedral surface $\mathcal{W}_t'$ that circumscribes and is tangent to $\mathcal{W}_t$. We choose this object in such a way that every side corresponds to a surface of diameter of order $O(\sqrt{t})$%$O(1)$ 
in the direction of some $e_i\in\sn$, where $W_{e_i}(t,x)\ge 2$. This means that there are $N(t)$ directions $e_1,e_2,\ldots, e_{N(t)}$ associated to $\mathcal{W}_t'$, and at every point of this polyhedron, at least one of the solutions $W_{e_i}$ is bounded from below by $2$ (See Figure \ref{supersol} for a $2$ dimensional picture). Moreover, if we are interested in the behavior along a specific $e\in\sn$, then the $e_i$'s are chosen so that $e$ is one of them.

\begin{figure}[h]
\centering
\includegraphics[width=0.55\textwidth]{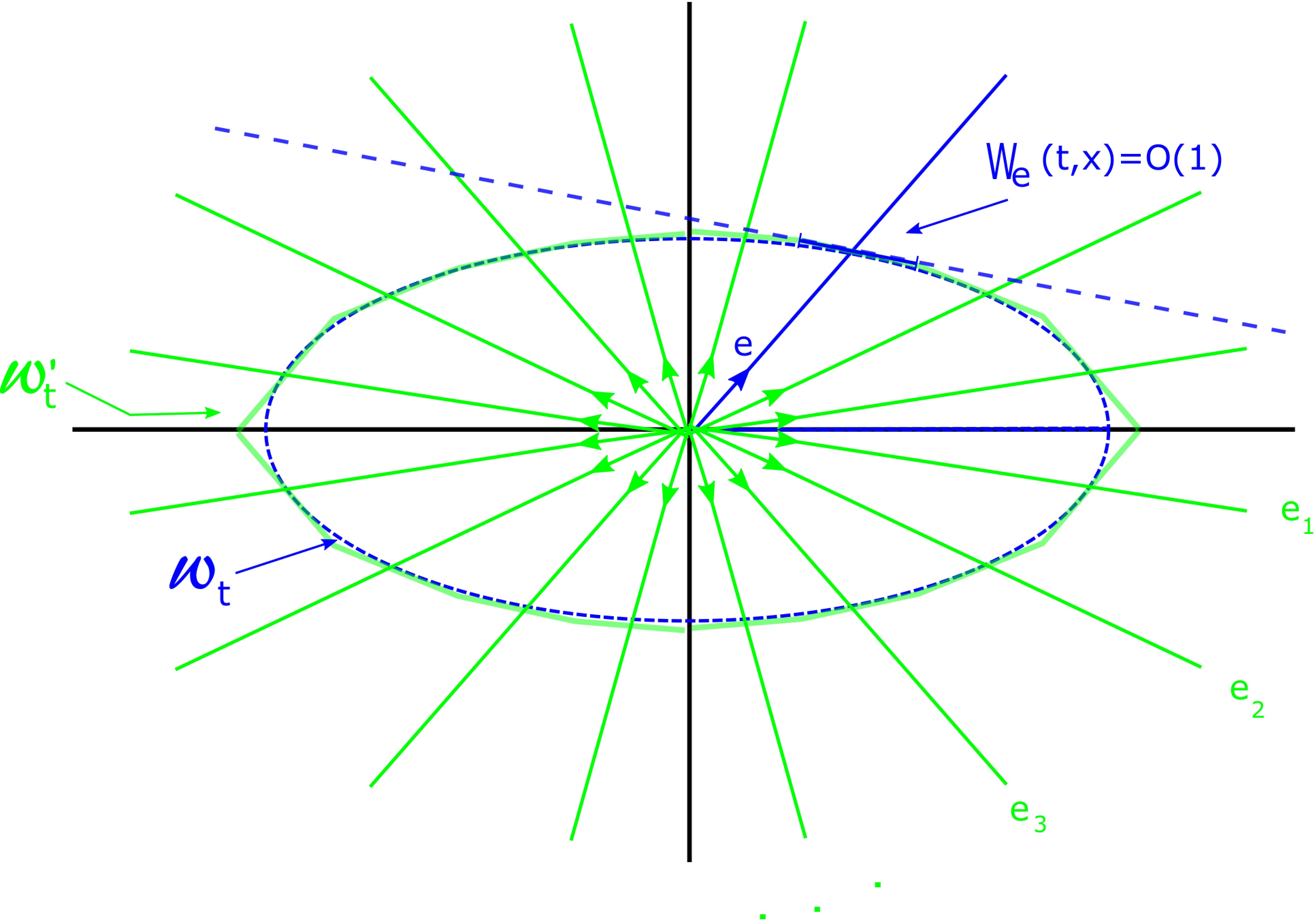} 
\caption[b]{\footnotesize Adding the functions coming from directions $e_1, e_2,\ldots, e_{N(t)}$. For a typical $e$ (blue) among these, $W_e(t,x)\ge 2$ on a segment of length $O(\sqrt{t})$%$O(1)$ 
at position $\ws(e) t - \frac{n/2+1}{\ls(e') e\cd e'}\log\left(\frac{t+T}{T}\right) + L$. Patching all these segments creates the polygonal path $\mathcal{W}'_t$ (green) tangent to $\mathcal{W}_t$ (blue).}
\label{supersol}
\end{figure}

\noindent Let 
\begin{equation*}
W(t,x)=W_{e_1}(t,x)+W_{e_2}(t,x)+\ldots+W_{e_{N(t)}}(t,x).
\end{equation*}
Since each $W_{e_i}$  is a super-solution for the Fisher-KPP equation, so is their sum $W$, and moreover, 
$$W(t,x)\ge 2 \quad \text{ for all } x\in\mathcal{W}_t'.$$

To guarantee that the sides of $\mathcal{W}_t'$ remain of order $O(\sqrt{t})$%$O(1)$
, we need to adjust the number of directions considered with respect to the time of interest. If we want information at time $\tau$, then we patch $N(\tau)$ directions so that at time $2\tau$, this condition is satisfied. Therefore to not abuse notation, we should denote by $\mathcal{W}_{t,\tau}'$ and $W(t,x;\tau)$ the polyhedral surface and function coming from these specific $N(\tau)$ directions, at time $t$. However, we will keep this in mind, but stick to the previous notation for the sake of clarity.\\

Now consider
\begin{equation*}
U(t,x) = \left\{\begin{array}{cl}
1, &\quad \text{ if } x \text{ lies inside } \mathcal{W}'_t, \\
\min\{1, W(t,x)\} &\quad \text{ if } x \text{ lies outside } \mathcal{W}'_t .
\end{array}  \right.
\end{equation*}
By increasing $L$ if necessary to keep the support of $u_0$ inside $B_L(0)$, we get that $U(0,x)\ge u_0(x)$ on $\rn$, therefore $U$ is a super-solution for \eqref{kpp}. Hence, to prove the upper bound in Theorem \ref{main}, it is enough to control $W$ at a fixed distance from $\mathcal{W}'_t$.

The first step is to find $N(t)$. Recall that this number was chosen so that $\mathcal{W}'_{2t}$ has $N(2t)$ sides of order $O(\sqrt{t})$%$O(1)$
. Note that $\mathcal{W}'_t$ has a linear diameter, therefore its surface is of order $O(t^{n-1})$. Since the area of each surface itself is of order $O(\sqrt{t}^{n-1})$%$O(1)$
, then $N(2t)O(\sqrt{t}^{n-1})=O(t^{n-1})$,%$N(2t)O(1)=O(t^{n-1})$,
so
$$N(t)=O(t^{\frac{n-1}{2}}).$$ %$$N(t)=O(t^{n-1}).$$

Fix $e\in\sn$. To control $W$ at locations of size larger than $\ws(e)t-\frac{n/2+1}{\ls(e')e\cd e'}\log\left(\frac{t+T}{T}\right)+L$ along $e$, let us recall the bounds \eqref{gaussian.decay} and \eqref{exp.decay} on $W_{e_i}$'s: for all $x$ satisfying $x\cd e_i'>0$ and $t\ge t_0$, 
\begin{equation*}
W_{e_i}\left(t,\left(\ws(e_i)t-\frac{n/2+1}{\ls(e_i')e\cd e_i'}\log\left(\frac{t+T}{T}\right)\right)\,e_i+x\right) \le Ct\e{-\lambda x\cd e_i'}\e{-\frac{|x|^2}{Kt}}
\end{equation*}
and
\begin{equation*}
W_{e_i}\left(t,\left(\ws(e_i)t-\frac{n/2+1}{\ls(e_i')e_i\cd e_i'}\log\left(\frac{t+T}{T}\right)\right)\,e_i+x\right) \le C x\cd e_i'\e{-\lambda x\cd e_i'},
\end{equation*} 
where $\lambda=\sup\limits_{e'\in\sn}\ls(e')$. Then at the location $$\left(\ws(e)t-\frac{n/2+1}{\ls(e)e\cd e'}\log\left(\frac{t+T}{T}\right)+r\right)\,e$$
we have
\begin{equation*}
\begin{array}{l}
\ds W_{e_i}\left(t,\left(\ws(e)t-\frac{n/2+1}{\ls(e)e\cd e'}\log\left(\frac{t+T}{T}\right)+r\right)\,e\right)\\ \ds=W_{e_i}\left(t,V(t;e_i) +t(\ws(e)e-\ws(e_i)e_i) +\log\left(\frac{t+T}{T}\right)\left(\frac{n/2+1}{\ls(e_i')e_i\cd e_i'}e_i-\frac{n/2+1}{\ls(e)e\cd e'}e\right)+re\right) %\\
%\ds\le C t(\ws(e)e-\ws(e_i)e_i)\cd e_i' \e{-\lambda t(\ws(e)e-\ws(e_i)e_i)\cd e_i'} \e{-\lambda\left(\frac{n/2+1}{\ls(e_i')e_i\cd e_i'}e_i-\frac{n/2+1}{\ls(e)e\cd e'}e \right)\cd e_i'\log\left(\frac{t+T}{T}\right)} \e{-\lambda re\cd e_i'},
\end{array}
\end{equation*}
where 
\begin{equation*}
V(t;e_i)=\left(\ws(e_i)t-\frac{n/2+1}{\ls(e_i')e_i\cd e_i'}\log\left(\frac{t+T}{T}\right)\right)\,e_i.
\end{equation*}

%\noindent The last inequality is true only if $(\ws(e)e-\ws(e_i)e_i)\cd e_i'>0$, and if the term of order $O(\log t)$ in the linear part is controlled by a multiple of $t(\ws(e)e-\ws(e_i)e_i)\cd e'$. 
We want to know how many of the directions will have a non-zero effect there, and from those, what will the size of the effect be. Note that from elementary geometric considerations, the minimal angle between any two adjacent directions is of order $O\left(\frac{1}{\sqrt{t}}\right)$. $e_i$ affects $e$ at the location of order $O(1)$ from $\mathcal{W}'_t$ only if $\sqrt{t}\sin(\alpha)=O(1)$, where $\alpha$ is the angle between $e_1$ and $e'$. Since $\alpha\ge\frac{c}{\sqrt{t}}$ for some $c$ independent of $t$, the maximum number $N_0$ of $e_i$'s that can affect $e$ is independent of $t$. Then by increasing $L$ if necessary and using the second of the bounds on $W_{e_i}$ above, we can assure that at locations of $O(1)$ from $\mathcal{W}'_t$, $W$ remains bounded by $\epsilon$. Since $U(t,x)=\min\{1,W(t,x)\}$ agrees with $W$ there, then it is as well bounded by $\epsilon$. 

As we move further away from the boundary, these directions of small angle with $e$ will be $\epsilon/2$ by the second bound on $W_{e_i}$. However now we need to take into account the combined effect of all the other directions. But the extra directions are at angles of order larger than $1/\sqrt{t}$ from $e$, so now $x-\left(\ws(e_i)t-\frac{n/2+1}{\ls(e_i')e_i\cd e_i'}\log\left(\frac{t+T}{T}\right)\right)\,e_i$ is of size larger than $\sqrt{t}$. To $W_{e_i}$ for these directions we can then apply the first Gaussian bounds. This means that all but $N_0$ directions exhibit exponential decay. Then since the number of directions is algebraic in $t$, the cumulative effect is still decaying, in particular $\le\epsilon/2$, in these locations. This implies that $W(t,x)\le\epsilon$ for all $x$ at a distance greater than some $L$ from $\mathcal{W}_t'$. But then so is $U$ by its definition.

We have shown that $U(t,x)\le\epsilon$ for all $x=Re, R\ge\ws(e)t-\frac{n/2+1}{\ls(e)e\cd e'} \log\left(\frac{t+T}{T}\right)+L$. Since $U$ is a super-solution for $u$, then $u\le\epsilon$ in the same half-space, concluding so the proof of the main theorem.

\section{The equation in a linearly moving half-space - proofs}\label{linearspace}

In this section we obtain decay estimates on the solutions of the linearized Dirichlet problem in the half-space defined by $x\cd e'>\cs(e')t$. 
\subsection{The upper bound}

The first claim is a result of the heat kernel bounds for \eqref{p.linear} in the half-space. These can be obtained by using the kernel bounds in the whole space from \cite{norris}, applied to the equation in the following form.

\subsubsection*{Proof of the upper bound in the proposition}

The proof will be based on finding a weighted $L^1\longrightarrow L^2$ bound on the solution operator $S_t$ for \eqref{p.linear}, and extending this to an $L^1\longrightarrow L^{\infty}$ via a standard duality argument. 

For ease of notation, we denote by $\mathbb{H}^{e}_t:=\{x: x\cd e> \cs(e)t\}$ the half-space moving with speed $\cs(e)$ in the direction $e$. From \eqref{p.linear},
\begin{equation}\label{p^2} %int of \nu p^2
\frac{d}{dt}\ih{t}\nu(x)p^2(t,x)=-2\ih{t}\nu(x)|\nabla p(t,x)|^2\,dx.
\end{equation}
The right hand side will be bounded using the following Nash-type inequality.

%---------------------------------------------------------------------
\begin{lemma}  %Nash
There exists a constant $C$ such that for all $e\in\sn$ and all $f\in L^1(\he{0})\bigcap H^1(\he{0})$ satisfying $f\ge 0$ in $\he{0}$, $f=0$ on $\partial\he{0}$,
\begin{equation} \label{nash.eq}
\ihe{0}|f(x)|^2\,dx
\le C_n\left(\ihe{0}|\nabla f(x)|^2\,dx\right)^{\frac{n+2}{n+4}}\left(\ihe{0}x\cd e f(x)\,dx\right)^{\frac{4}{n+4}}.
\end{equation}
\end{lemma}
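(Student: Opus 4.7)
\bigskip

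\noindent\textbf{Proof plan.} The target exponents $\frac{n+2}{n+4}$ and $\frac{4}{n+4}$ are precisely those of the classical Nash inequality in dimension $n+2$ (noting that $2+\tfrac{4}{n+2}=\tfrac{2(n+4)}{n+2}$). This strongly suggests a dimensional-lifting trick: encode the Dirichlet condition and the weight $x\cd e$ by realizing $f$ as a cylindrically symmetric function on $\rn\times\R^3=\R^{n+2}$ and then invoking Nash's inequality there.

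\smallskip

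After a rotation I may assume $e=e_n$, so $\he{0}=\{x_n>0\}$ and $x\cd e=x_n$. The plan is to define
\[
F(x',y):=\frac{f(x',|y|)}{|y|},\qquad x'\in\R^{n-1},\ y\in\R^{3},
\]
and to compute each ingredient of Nash's inequality for $F$ on $\R^{n+2}$. Passing to spherical coordinates in $y$ (Jacobian $4\pi r^{2}$ with $r=|y|$) the powers of $r$ conspire so that
\[
\|F\|_{L^{1}(\R^{n+2})}=4\pi\!\int_{\he{0}}\! x_{n}f\,dx,\qquad
\|F\|_{L^{2}(\R^{n+2})}^{2}=4\pi\|f\|_{L^{2}(\he{0})}^{2}.
\]
The gradient needs a short computation: writing $G(x',r):=f(x',r)/r$ one has $\nabla_{x'}F=\nabla_{x'}f/r$ and $\nabla_{y}F=G'(r)\hat{y}=(r\partial_{r}f-f)/r^{2}\cdot\hat y$. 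Expanding $|\nabla_{y}F|^{2}$ and multiplying by the spherical factor $4\pi r^{2}$ produces
\[
(\partial_{r}f)^{2}-\frac{\partial_{r}(f^{2})}{r}+\frac{f^{2}}{r^{2}}.
\]
The key step is to integrate the middle term by parts in $r$: the boundary term at $r=0$ vanishes because $f(x',0)=0$ (and the Dirichlet condition gives $f^{2}/r\to0$ as $r\downarrow 0$), while at infinity one uses compact support / density, so the $\tfrac{f^{2}}{r^{2}}$ contribution exactly cancels. This yields the clean identity
\[
\|\nabla F\|_{L^{2}(\R^{n+2})}^{2}=4\pi\,\|\nabla f\|_{L^{2}(\he{0})}^{2}.
\]

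\smallskip

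With these three identities in hand, the plan is to apply the standard Nash inequality on $\R^{n+2}$,
\[
\|F\|_{2}^{\,2+\frac{4}{n+2}}\le C_{n+2}\|\nabla F\|_{2}^{\,2}\,\|F\|_{1}^{\frac{4}{n+2}},
\]
substitute, and raise to the power $\tfrac{n+2}{n+4}$; the constants $(4\pi)$ combine to give a dimensional constant $C_{n}$, and the remaining powers of $\|f\|_{2}^{2}$, $\|\nabla f\|_{2}^{2}$, $\|x\cd e\,f\|_{1}$ match the target exponents $1$, $\tfrac{n+2}{n+4}$, $\tfrac{4}{n+4}$. A quick scaling check ($f\mapsto f(\lambda\,\cd)$) confirms that both sides scale as $\lambda^{-n}$, consistent with the claim.

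\smallskip

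\noindent\textbf{Main obstacle.} The calculation itself is essentially forced once one has the idea of lifting to $\R^{n+2}$ via three artificial coordinates (rather than one or two); this is the only choice that makes the weight $x\cd e$ appear on the $L^{1}$ side while simultaneously killing the Hardy-type term $\int f^{2}/r^{2}$ in the gradient identity. The technical obstacle is just the justification for rough $f\in L^{1}\cap H^{1}$: one first proves the inequality for $f\in C_{c}^{\infty}(\he{0})$, for which $F\in C_{c}^{\infty}(\R^{n+2}\setminus\R^{n-1}\times\{0\})$ with $F$ bounded near $y=0$ (since $f$ vanishes linearly in $r$), and then extends by density. Uniformity of the constant in $e$ is immediate from the rotational invariance.
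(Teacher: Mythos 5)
Your proof is correct, but it takes a genuinely different route from the paper. The paper proves the inequality directly on the Fourier side: it rotates so the half-space is $\{x_1>0\}$, takes the \emph{odd extension} $G$ of $f$ to $\rn$, and splits $\int|\hat G|^2$ into low and high frequencies; the key point is that oddness forces $\hat G(0,\xi_2,\dots,\xi_n)=0$, so $|\hat G(\xi)|\le|\xi_1|\,\|x_1G\|_{L^1}$, which yields $\int_{B_R}|\hat G|^2\le CR^{n+2}\|x_1G\|_{L^1}^2$, while $\int_{B_R^c}|\hat G|^2\le R^{-2}\|\nabla G\|_{L^2}^2$; optimizing in $R$ gives the exponents. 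Your argument instead lifts $f$ to $\R^{n+2}$ via $F(x',y)=f(x',|y|)/|y|$ with $y\in\R^3$ and invokes the classical Nash inequality there; the division by $|y|$ makes the $L^2$ and $\dot H^1$ norms unweighted (after the integration by parts that cancels the Hardy term $\int f^2/r^2$, legitimate for $f\in C_c^\infty$ of the open half-space and then by density), while the Jacobian $r^2$ produces exactly the weight $x\cd e$ on the $L^1$ side. Both are sound and give a constant uniform in $e$ by rotation invariance. Your version is more conceptual -- it identifies $n+2$ as an effective dimension, which matches the paper's own heuristic that $p$ behaves like a heat kernel in $\R^{n+2}$, and in fact the paper uses precisely this three-coordinate lifting later, in the proof of the exponential-moment Nash inequality \eqref{Nash.n+2} (there with $F(z)=f(|(z_1,z_2,z_3)|,z_4,\dots,z_{n+2})$, not divided by $|y|$, which is why weighted norms appear on all three sides there). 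The paper's Fourier proof buys self-containedness (it does not quote Nash's inequality as a black box) and avoids the integration-by-parts justification near $r=0$.
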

%----------------------------------------------------------------------------
\begin{proof}
Rotate the $x$-axis so that the function $g(x)$ obtained from $f(x)$ with this change of variables is defined on the half-space $\{x_1\ge 0\}$. Let $G(x)$ be the odd extension of $g(x)$ to the whole $\rn$. Using Fourier isometry,
\begin{equation*} %int of G
(2\pi)^n\int\limits_{\rn}|G(x)|^2\,dx =\int\limits_{\rn}|\hat{G}(\xi)|^2\,d\xi =\int\limits_{B_R}|\hat{G}(\xi)|^2\,d\xi+\int\limits_{B_R^c}|\hat{G}(\xi)|^2\,d\xi.
\end{equation*}
To bound the first term, note that
\begin{equation*} %\partial_1\hat{G}
\left|\frac{\partial}{\partial\xi_1}\hat{G}(\xi)\right| =\left|\int\limits_{\rn}x_1\e{-ix\cd\xi}G(x)\,dx\right| \le\ds\int_{\rn}|x_1G(x)|\,dx.
\end{equation*}
Since $\hat{G}(0,\xi_2,\ldots,\xi_n)=0$, $\ |\hat{G(\xi)}|
=\left|\ds\int\limits_0^{\xi_1}\ds\frac{\partial}{\partial\xi_1}\hat{G}(\zeta_1,\xi_2,\ldots,\xi_n)\,d\zeta_1\right|\le |\xi_1|\ds\int_{\rn}|x_1G(x)|\,dx$. Therefore,
\begin{equation*} %1st bound
\int\limits_{B_R}|\hat{G}(\xi)|^2\,d\xi \le\int\limits_{B_R}|\xi_1|^2\,d\xi\left(\int\limits_{\rn}|x_1G(x)\,dx\right)^2 \le CR^{n+2} \left(\int\limits_{\rn}|x_1G(x)\,dx\right)^2.
\end{equation*}
On the other hand, 
\begin{equation*} %2nd bound
\int\limits_{B_R^c}|\hat{G}(\xi)|^2\,d\xi \le\frac{1}{R^2}\int\limits_{B_R^c}|\xi|^2|\hat{G}(\xi)|^2\,d\xi =\frac{1}{R^2}\int\limits_{\rn}|\nabla G|^2\,dx.
\end{equation*}
Combining both bounds,
\begin{equation*}
\int\limits_{\rn}|G(x)|^2\,dx
\le CR^{n+2} ||x_1G||_1^2+C\frac{1}{R^2}||\nabla G||_2^2,
\end{equation*}
which can be optimized by choosing $R=\left(\frac{||\nabla G||_2}{||x_1G||_1}\right)^{\frac{2}{n+4}}$, to give
\begin{equation*} %Nash for G
||G||_2\le C||\nabla G||_2^{\frac{n+2}{n+4}}||x_1G||_1^{\frac{2}{n+4}}.
\end{equation*}
Finally converting back to the original variables gives the desired inequality.
\end{proof}

After applying this inequality to $f(x)=p(t,x+\ws(e)t\,e)$ and a change of variables,
\begin{equation*} %Nash to p(t,x+\ws(e)te)
\ih{t}|p(t,x)|^2\,dx
\le C\left(\ih{t}|\nabla p(t,x)|^2\,dx\right)^{\frac{n+2}{n+4}}\left(\ih{t}(x\cd e'-\cs(e')t) p(t,x)\,dx\right)^{\frac{4}{n+4}}.
\end{equation*}
Recalling that $\nu(x)$ is uniformly bounded from above and below by positive constants, this implies for \eqref{p^2} that
\begin{equation}\label{p^2.2} %ineq for \int \nu p^2
\frac{d}{dt}\ih{t}\nu(x)p(t,x)^2\,dx
\le -C\left(\ih{t}\nu(x)p(t,x)^2\,dx\right)^{\frac{n+4}{n+2}}\left(\ih{t}(x\cd e'-\cs(e')t)p(t,x)\,dx\right)^{-\frac{4}{n+2}}
\end{equation}
If we could bound the second term on the right uniformly from above, then we would be left with an easy differential inequality for the $L^2$ norm of $p$. To this end, if we had a funtion $f$ that vanished when $x\cd e'=\cs(e')$, then
\begin{equation*} %\int \nu pf
\frac{d}{dt}\int\limits_{\h{t}} \nu(x)f(t,x)p(t,x)\,dx=\int\limits_{\h{t}}\nu(x)p(t,x)\left[f_t(t,x)+\frac{1}{\nu(x)}\nabla\cd(\nu(x)\nabla f(t,x))+\frac{F(x)}{\nu(x)}\cd\nabla f(t,x)  \right]dx.
\end{equation*}
So if $f$ solves the backwards adjoint equation, then the integral on the left would be conserved. The existence of $f$ is established in the next lemma.

%----------------------------------------------------------------------------
\begin{lemma} \label{lemma.backwards}
There exists a function $f(t,x)$ and a constant $M>1$ such that
\eal{backwards}{ll}{
f_t+\frac{1}{\nu(x)}\nabla\cd(\nu(x)\nabla f)+\frac{F(x)}{\nu(x)}\cd\nabla f = 0, \quad & x\in\h{t}, \ t\in\R,\\
f(t,x)=0, \quad & x\in\h{0}, \ t\in\R,
}
and satisfying 
\begin{equation}\label{fbounds}
f_t<0, \quad \text{ and } \quad M^{-1}(x-\ws(e)t\,e)\cd e' \le f(t,x) \le M(x-\ws(e)t\,e)\cd e' \ \text{ for all } x\in\h{t}, \ t\in\R.
\end{equation}
\end{lemma}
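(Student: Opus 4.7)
I would build $f$ as an explicit linear profile plus a bounded periodic corrector, and then absorb the resulting boundary error with a bounded backward-parabolic correction. The pivotal observation is that the Fredholm solvability condition for the cell corrector is exactly the minimizer identity $\cs(e') = \ws(e)\,e\cd e'$ from Theorem~\ref{unique}.

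\textbf{Cell corrector.} Try the ansatz $f_0(t,x) := (x\cd e' - \cs(e')t) + h(x)$ with $h$ periodic. Using $\nabla\nu = \nu\kappa + F$ from \eqref{nu}, a direct computation shows that $f_0$ solves the PDE in \eqref{backwards} iff
\begin{equation*}
\frac{1}{\nu}\nabla\cd(\nu\nabla h) + \frac{F}{\nu}\cd\nabla h \;=\; \cs(e') - e'\cd\kappa - \frac{2\,F\cd e'}{\nu} \qquad \text{on } \tn.
\end{equation*}
Multiplying by $\nu$ and integrating over $\tn$, the left-hand side vanishes thanks to periodicity and $\nabla\cd F = 0$, so solvability requires the right-hand side to integrate against $\nu$ to zero. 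Using $\int_{\tn}\nu = 1$, $\int_{\tn} F = \ws(e)\,e$ from \eqref{drift}, and $\int_{\tn}\nu\kappa = -\ws(e)\,e$ (obtained by integrating \eqref{nu}), this compatibility collapses exactly to $\cs(e') - \ws(e)\,e\cd e' = 0$, the minimizer identity from Theorem~\ref{unique}. A smooth periodic $h$ therefore exists, and $f_0$ solves the equation throughout $\h{t}$.

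\textbf{Boundary correction, bounds, and sign of $f_t$.} Since $f_0 = h$ on $\partial\h{t}$, define $\varphi$ to solve the same backward parabolic equation with Dirichlet data $\varphi = h$ on $\partial\h{t}$. I would construct $\varphi$ as the limit of truncated solutions $\varphi_T$ on $[-T,0]$ with zero terminal data; after reversing time this is a standard forward-parabolic Dirichlet problem on an expanding half-space with bounded data, and the maximum principle yields $\|\varphi_T\|_\infty \le \sup|h|$ uniformly in $T$. Passing $T \to \infty$ gives a bounded $\varphi$, and $f := f_0 - \varphi$ solves \eqref{backwards}. Away from the boundary, $f - (x\cd e' - \cs(e')t) = h - \varphi$ is bounded, so the upper bound in \eqref{fbounds} holds for $M$ large. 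For the lower bound I would use $M^{-1}$ times the same ansatz, modified by a small boundary-layer subtraction, as an explicit sub-solution, and appeal to Hopf's lemma to control $f$ near the boundary, uniformly in $t$ thanks to the periodicity of $\nu, F, \kappa$. For the sign of $f_t$: differentiating the equation in $t$ shows that $g := f_t$ solves the same homogeneous backward parabolic equation, while differentiating the boundary identity $f(t, \cs(e')t\,e' + y) = 0$ for $y\perp e'$ gives $f_t = -\cs(e')\,\nabla f\cd e'$, which is strictly negative on $\partial\h{t}$ by Hopf; the strong maximum principle then propagates $f_t < 0$ into $\h{t}$.

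\textbf{Main obstacle.} The most delicate step is the sharp lower bound $f \ge M^{-1}(x - \ws(e)t\,e)\cd e'$ uniformly up to the moving boundary: one needs a quantitative Hopf estimate whose constant does not degenerate as $t\to\infty$, which requires combining the periodicity of $\nu, F, \kappa$ with uniform parabolic boundary regularity. Everything else — existence of $h$, boundedness of $\varphi$, the upper bound, and the strict sign of $f_t$ — is essentially forced by the ansatz and the minimizer identity.
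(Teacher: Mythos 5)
Your cell-corrector step is exactly the paper's: there one writes $G(t,x)=(x\cd e'-\cs(e')t)+g(x)$ with $g$ periodic solving $\nabla\cd(\nu\nabla g)+F\cd\nabla g=\cs(e')\nu-\nabla\nu\cd e'-F\cd e'$, and the Fredholm condition is precisely $\cs(e')=\ws(e)\,e\cd e'$ via \eqref{normalized} and \eqref{drift}; your identity $\int_{\tn}\nu\kappa=-\ws(e)e$ is the same computation, and the paper likewise obtains $f$ as a limit of truncated backward problems. The gaps are in how you finish. First, the decomposition $f=f_0-\varphi$ with $\varphi$ carrying the lateral data $h$ does not yield $f\ge 0$ near the moving boundary: the bound $\|\varphi\|_\infty\le\sup|h|$ says nothing about $\nabla\varphi\cd e'$ there, and without $f\ge 0$ in the interior you cannot invoke Hopf's lemma either for the lower bound or for the sign of $\nabla f\cd e'$ that your $f_t$ argument relies on, so the argument is circular at that point. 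The paper avoids this by normalizing $g$ so that $G\le 0$ on $\partial\h{t}$ and prescribing the terminal data $\max\{0,G(n,\cdot)\}$ for the truncated problems $f^{(n)}$; comparison then gives $G\le f^{(n)}\le G+\alpha$ and $f^{(n)}\ge 0$ outright, and a Hopf-type estimate $\nabla f^{(n)}\cd e'\ge c_0$ on the boundary (uniform in $n$ and $t$ by periodicity and parabolic regularity) supplies the linear lower bound in the boundary layer.

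Second, the claim that the maximum principle propagates $f_t<0$ from the lateral boundary into $\h{t}$ is not valid for an eternal backward solution: boundedness of $g=f_t$ together with $g\le 0$ on the lateral boundary does not force $g\le 0$ inside. For instance $g(t,x)=\e{\lambda t}\sin(\sqrt{\lambda}\,x_1)$ solves $g_t+\Delta g=0$ on $\{x_1>0\}$, is bounded, vanishes on the boundary, and changes sign in the interior; the same obstruction persists for the drifted operator on the moving half-space. To control the sign of $f_t$ one must control the terminal slices of the approximating problems — for example by the time-shift comparison $f^{(n)}(t+s,x)\le f^{(n)}(t,x)$ for $s>0$, which uses $f^{(n)}\ge 0$, the monotone motion of $\h{t}$, and the monotonicity in $t$ of the terminal data $\max\{0,G\}$ — and this is exactly what the paper's choice of terminal data is engineered to deliver. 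In your construction the terminal data of $\varphi_T$ is $0$, so $f_T=f_0$ at the terminal time; this does give $\partial_tf_T=-\cs(e')<0$ on that slice, but you would still need to verify the lateral sign of $\partial_tf_T$ without presupposing $f_T\ge 0$, which brings you back to the first gap.
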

%----------------------------------------------------------------------------
%
Note that if we define
\begin{equation}\label{lin.mom}
I(t):=\ih{t}\nu(x)f(t,x)p(t,x)\,dx,
\end{equation}
then $I(t)=I(0)$, and the linear moments of $p$ remain uniformly bounded, i.e.
\begin{equation}\label{lin.mom.bound}
C^{-1}I(0)\le\ih{0}x\cd e'p(t,x+\ws(e)t\,e)\,dx\le CI(0).
\end{equation}

\begin{proof}
The idea is to construct $f$ as the limit of a sequence of functions $f^{(n)}$ that satisfy approximate growth bounds on increasing time intervals. We start by noting that \eqref{backwards} possesses a solution of the form $G(t,x)=(x\cd e'-\cs(e')t) + g(x)$, where $g(x)$ satisfies the periodic equation
\begin{equation*} %eq for g
\nabla\cd(\nu(x)\nabla g(x)) + F(x)\cd\nabla g(x)
=\cs(e')\nu(x)-\nabla\nu(x)\cd e'-F(x)\cd e'.
\end{equation*}
The existence of $g$ is a consequence of the integral on the right being $0$ due to \eqref{normalized} and \eqref{drift}. Subtracting a constant from $g$ if necessary, we may assume that $G(t,x)\le 0$ on $\partial\h{t}=\{x\cd e'=\cs(e')t\}$.
% Continuous dependence on the parameter $e$, together with compactness of $\sn$ imply that both max and min of $g(x;e)$ are unif bounded as functions of $e$. Therefore the constant added to assure negative boundary data is uniform over $\sn$.
$G$ solves \eqref{backwards}, but may not satisfy the boundary conditions. Instead, we use $G$ as a 
\ea{ll}{
f^{(n)}_t+\frac{1}{\nu(x)}\nabla\cd(\nu(x)\nabla f^{(n)})+\frac{F(x)}{\nu(x)}\cd\nabla f^{(n)} = 0, \quad & x\in\h{t}, \ t<n,\\
f^{(n)}(t,x)=0, \quad & x\in\h{t}, \ t\in\R,\\
f^{(n)}(n,x)=\max\{0,G(n,x)\}, \quad & x\in\h{n}.
}
%Forward equation: $g_n(t,x)=f_n(-t,x)$ on $x\cd e'>-\cs(e')t$, and $t>-n$, with initial data $\max\{0,G(-n,x)\}$. $0, G(-t,x)$ both solutions of the equation, and $G\le 0$ on boundary ---> sub-sol. $g_n\ge\max\{0,G(-t,x)\}$ by comparison.
Note that by the comparison principle, $f^{(n)}\ge G$ for all $x\in\h{t}, \ t<n$. Similarly, if $\alpha=\max_{\tn}g(x)$, then $G+\alpha\ge f^{(n)}$ both at the terminal time, and on the boundary of $\h{t}$, therefore the inequality holds everywhere in time and space. So
\begin{equation*} % bounds on f^{(n)}
G(t,x)\le f^{(n)}(t,x)\le G(t,x)+\alpha, \ \forall x\in\h{t}, \ t\le n.
\end{equation*}
From this, when $x\cd e'-\cs(e')t\ge 2\alpha$,
\begin{equation*} % upper linear bound
f^{(n)}(t,x)\le x\cd e'-\cs(e')t+g(x)+\alpha
\le x\cd e'-\cs(e')t+2\alpha
\le 2(x\cd e'-\cs(e')t),
\end{equation*}
and
\begin{equation*}
f^{(n)}(t,x)\ge x\cd e'-\cs(e')t+g(x)
\ge x\cd e'-\cs(e')t-\alpha
\ge 2^{-1}(x\cd e'-\cs(e')t).
\end{equation*}
In particular, by the strong maximum principle, $\nabla f^{(n)}(t,x)\cd e'\ge c_0$ on the boundary of $\h{t}$ for all $n$ and $t\le n/2$. Therefore, by parabolic regularity, we can extract a subsequence converging to a limit $f(t,x)$ satisfying the PDE and the linear bounds.

Finally, $f^{(n)}(t,x)\ge 0$ and $f^{(n)}(t,x)\ge G$ for all $n$, therefore the same bounds hold for the limit $f(t,x)$. But then the parabolic maximum principle implies that $f_t(t,x)<0$.
\end{proof}
This implies that
\begin{equation*}
\begin{array}{ll}
\left(\ds\ih{t}(x\cd e'-\cs(e')t)p(t,x)\,dx\right)^{-1} &
\le C\left(\ds\ih{t}\nu(x)f(t,x)p(t,x)\,dx\right)^{-1}
=C\left(\ds\ih{0}\nu(x)f_0(x)p_0(x)\,dx\right)^{-1}\\
& \le C'\left(\ds\ih{0}x\cd e'\, p_0(x)\,dx\right)^{-1}.
\end{array}
\end{equation*}
Going back to \eqref{p^2.2} and solving the differential inequality, we obtain the following $L^1\longrightarrow L^2$ bound:
\begin{equation*} %L^1 to L^2 bound
||p(t,\cd)||_2
\le \frac{C}{t^{\frac{n+2}{4}}}\ih{0}y\cd e' p_0(y)\,dy.
\end{equation*}
But the adjoint $S_t^*$ of the solution operator $S_t$ is the same except for having $-F(x)$ instead of $F(x)$ in \eqref{p.linear}, therefore by duality, it satisfies an $L^2\longrightarrow L^{\infty}$ bound with the same norm:
\begin{equation*} %L^2 to L^{\infty} bound
|p(t,x)|\le \frac{C(x\cd e'-\cs(e')t)}{t^{\frac{n+2}{4}}}||p_0||_2.
\end{equation*}
Since $S_t=S_{t/2}\circ S_{t/2}$, combining the bounds proves the first part of proposition \ref{linearbounds}.

\subsection{The lower bound}

The proof makes use of the estimates of the heat kernel bounds of the equation for $p$, together with a lower estimate for it in a small portion of a ball of radius $O(\sqrt{t})$. These are given respectively by lemma \ref{heat.kernel.bounds} and proposition \ref{lower.estimate} below.

We start with analyzing the heat kernel of the equation
\begin{equation} \label{p.equation}
\rho_t=\frac{1}{\nu(x;e')}\nabla\cd(\nu(x;e')\nabla \rho)-\frac{F(x;e')}{\nu(x;e')}\cd\nabla \rho
\end{equation}
on the cylinder
\begin{equation*} % domain for heat kernel bounds
\mathcal{C}(s,x_0,R;e') := (s,\infty)\times B_R(x_0+\ws(e)s\,e)\subset \R_t\times\rn_x,
\end{equation*}
with Dirichlet boundary conditions. Let $\bar{\Gamma}(t,x,s,y;x_0,R,e')$ denote the heat kernel for \eqref{p.equation} in the cylinder. This means that $\bar{\Gamma}$ satisfies the equation in $(t,x)$ for $t>s$, $x\in B_R(x_0+\ws(e)t\,e)$ and $y\in B_R(x_0+\ws(e)s\,e)$, it vanishes when $x\in\partial B_R(x_0+\ws(e)t\,e)$, and has the initial condition $\lim\limits_{t\downarrow s} \bar{\Gamma}(t,x,s,y)=1/\nu(y)\delta_y(x)$.

\subsubsection{Heat kernel bounds}

Let us also denote by $\Gamma(t,x,s,y;e')$ the heat kernel for \eqref{p.equation} on the whole space. We will use the following result from \cite{norris} to get Gaussian bounds for $\Gamma$.

%----------------------------------------------------------------------------
\begin{lemma}[Norris]\label{lemma.Norris}
Let L be an operator given by 
\begin{equation} \label{Norris}
Lf=\frac{1}{\nu(x)}\nabla\cd(\nu(x)(A(x)+B(x))\nabla f) +\frac{b}{\nu(x)}\cd\nabla f,
\end{equation}
where $A$ is a measurable, symmetric, uniformly elliptic matrix, $B$ is measurable, skew-symmetric, and they satisfy
\begin{equation*} %matrix conditions
\lambda^{-1}|\xi|^2\le \xi\cd (A(x)\xi) \le\lambda |\xi|^2, \ \forall x,\forall \xi, \qquad
|B(x)|\le \lambda, \ \forall x;
\end{equation*}
$\nu(x)$ satisfies $\lambda^{-1}\le\nu(x)\le\lambda$, and if $b\neq 0$, there exists a measurable, bounded vector field $V$ such that 
\begin{equation} \label{vf.condition}% vector field condition
\frac{1}{\nu(x)}\nabla\cd(\nu(x)V)=\frac{1}{\nu(x)}-1.
\end{equation}
Then there exists $K=K(n,\lambda)$ such that if $\Gamma$ is the heat kernel associated to $\partial_t-L$,
\begin{equation*} %Norris' heat kernel bounds
\frac{1}{Kt^{n/2}}\e{-K|x-y|^2/t} \le \Gamma(t,x,0,y+tb)
\le\frac{K}{t^{n/2}}\e{-|x-y|^2/Kt}.
\end{equation*}
\end{lemma}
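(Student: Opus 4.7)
I would treat the upper and lower bounds separately, following the classical program of Aronson, Nash, Moser and Davies, with the constant drift $b$ absorbed by working in a moving frame and the skew-symmetric part $B$ handled through the fact that it is invisible to the natural Dirichlet form associated with $L$. Concretely, the weighted $L^2$ inner product is $\langle f, g\rangle_\nu = \int \nu f g \, dx$, and the symmetric part of $L$ with respect to $\langle\cdot,\cdot\rangle_\nu$ is exactly $\frac{1}{\nu}\nabla\cd(\nu A\nabla \cd)$, while $\frac{1}{\nu}\nabla\cd(\nu B\nabla\cd)$ and $\frac{b}{\nu}\cd\nabla$ are antisymmetric. In particular, if $u$ solves $u_t = Lu$, then $\frac{d}{dt}\langle u,u\rangle_\nu = -2\int \nu \nabla u\cd A\nabla u\,dx$, with $B$ and $b$ dropping out.

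For the upper bound, I would first establish the on-diagonal decay $\|T_t\|_{L^1(\nu)\to L^\infty} \le K t^{-n/2}$ by Nash's argument: combine the energy identity above with the Nash inequality $\|u\|_2^{2+4/n} \le C\|\nabla u\|_2^2\|u\|_1^{4/n}$ to obtain the differential inequality that yields $\|T_t\|_{L^1\to L^2}\le Ct^{-n/4}$, then dualize using $T_t^*$ (which has the same form with $A, -B, -b, V$ replacing $A, B, b, V$, in particular the compatibility condition \eqref{vf.condition} is symmetric under $b\to -b$), and compose. The off-diagonal Gaussian upper bound is then obtained by Davies' perturbation trick: for any Lipschitz $\phi$ with $\|\nabla\phi\|_\infty \le 1$, conjugate the semigroup as $T_t^{\alpha\phi} = e^{-\alpha\phi}T_t e^{\alpha\phi}$; a computation using the antisymmetry of $B$ and $b$ shows that the bilinear form of the generator picks up an additional quadratic term of size $O(\alpha^2)$ from the symmetric part, and a term of size $O(\alpha b\cd\nabla\phi)$ from the drift. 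Applying Nash to the perturbed equation yields $\|T_t^{\alpha\phi}\|_{1\to\infty}\le Kt^{-n/2}e^{Ct(\alpha^2+\alpha b\cd\nabla\phi)}$; optimizing $\phi(z)=|z-y|$ and $\alpha = |x-y-tb|/(2Ct)$ and unwinding the exponential weight produces $\Gamma(t,x,0,y) \le Kt^{-n/2}e^{-|x-y-tb|^2/(Kt)}$.

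For the lower bound, the key tool is Moser's parabolic Harnack inequality for divergence-form equations. The operator $L$ is not quite of standard divergence form because of the lower-order drift $\frac{b}{\nu}\cd\nabla$; this is where condition \eqref{vf.condition} enters decisively: it allows us to rewrite $\frac{b}{\nu}\cd\nabla f = b\cd\nabla f + (\frac{1}{\nu}-1)b\cd\nabla f = b\cd\nabla f + \frac{1}{\nu}\nabla\cd(\nu V) b\cd\nabla f$, repackaging the drift into a divergence-form piece (with the skew-symmetric contribution from $V\otimes b - b\otimes V$ absorbed into a modified $B$) plus a constant shift $b\cd\nabla$ which is removed by the moving frame $y\mapsto y+tb$. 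With this, one is in the scope of Moser--Aronson Harnack, so a standard chaining argument along a tube of radius $\sqrt{t/k}$ connecting $y+tb$ to $x$ with $k\sim 1+|x-y|^2/t$ Harnack balls yields the lower Gaussian bound.

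The main obstacle I anticipate is the Davies perturbation step in the presence of the skew-symmetric $B$: although $B$ is annihilated by the unperturbed energy identity, once one conjugates by $e^{\alpha\phi}$ it produces a cross term $2\alpha\int \nu(\nabla u\cd B\nabla\phi)u\,dx$ that must be absorbed into the symmetric part via Cauchy--Schwarz using only the ellipticity of $A$, not of $A+B$. This absorption is what forces the Gaussian constant $K$ to depend on $\lambda$ in the way Norris states, and checking the constants carefully through the Nash--Davies combination is the delicate bookkeeping step of the argument.
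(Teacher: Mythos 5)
You should know at the outset that the paper does not prove this statement at all: it is imported verbatim, with attribution, from Norris \cite{norris}, and the only work done in the text is the verification of its hypotheses (identifying $A=I$, the skew-symmetric $B$ with $\nabla B = F-\ws(e)e$ via Helmholtz, $b=-\ws(e)e$, and $V=\nabla u$ for a periodic $u$) for the specific operator arising from \eqref{p.equation}. So there is no in-paper argument to compare yours against; what follows is an assessment of your sketch on its own terms. Its overall shape — antisymmetry of $B$ and of the drift with respect to $\langle\cd,\cd\rangle_\nu$, Nash iteration plus duality for the on-diagonal bound, Davies' exponential conjugation for the Gaussian upper bound, Harnack chaining for the lower bound — is the standard and correct program for results of this type, and your identification of the delicate point in the Davies step (absorbing the cross term produced by $B$ using only the ellipticity of $A$) is apt.

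One step, however, does not work as written. You claim that \eqref{vf.condition} lets you repackage $\bigl(\tfrac{1}{\nu}-1\bigr)b\cd\nabla f$ as a divergence-form term with the skew-symmetric matrix $V\otimes b-b\otimes V$ absorbed into a modified $B$, leaving only the constant drift $b\cd\nabla$. Expanding that divergence and using \eqref{vf.condition} gives
\begin{equation*}
\frac{1}{\nu}\nabla\cd\bigl(\nu\,(V\otimes b-b\otimes V)\nabla f\bigr)
=\Bigl(\frac{1}{\nu}-1\Bigr)\,b\cd\nabla f-\frac{1}{\nu}\,\bigl(b\cd\nabla(\nu V_j)\bigr)\,\partial_j f,
\end{equation*}
so a residual first-order term survives whose coefficient involves $\nabla(\nu V)$; only $V$ itself, not its gradient, is assumed bounded (indeed $V$ is merely measurable), so this term is not controlled, and even in the weak formulation the leftover bilinear form $\int\nu\,(V\cd\nabla f)(b\cd\nabla g)\,dx$ is a non-symmetric perturbation of the second-order part whose symmetric component can destroy uniform ellipticity when $|b|\,|V|$ is not small. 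The reduction to the Moser--Aronson setting is therefore not immediate; this is exactly where Norris's actual argument is more involved (the corrector $V$ is used at the level of the semigroup to fix the centering $y\mapsto y+tb$ in the long-time regime, not to rewrite the generator). The rest of the sketch is sound modulo this repair, but as it stands the lower bound is not proved.
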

%----------------------------------------------------------------------------
%
For our operator, $A$ is the identity matrix, and $F(x)=(F(x)-\ws(e)e)+\ws(e)e$, where the first part is a mean zero, divergence-free vector field by virtue of \eqref{drift}. Then by Helmholtz decomposition, that can be expressed as the divergence of a skew-symmetric matrix $B(x)$, i.e. $F(x)-\ws(e)e=\nabla B(x)$, so
\begin{equation*}
\frac{F(x)}{\nu(x)}\cd\nabla\rho
=\frac{1}{\nu(x)}\nabla B(x)\cd\nabla\rho+\frac{\ws(e)e}{\nu(x)}\cd\nabla\rho
=\frac{1}{\nu(x)}\nabla\cd(\nu(x)(\frac{1}{\nu(x)} B(x)\cd\nabla\rho))+\frac{\ws(e)e}{\nu(x)}\cd\nabla\rho.
\end{equation*}
Therefore we can take $b=-\ws(e)e$, and $1/\nu(x)B(x)$ as the second matrix. Furthermore, \eqref{vf.condition} can be solved with $V=\nabla u$ for a periodic $u$, as the integral of the right side with respect to the $\nu(x)\,dx$ measure vanishes. Finally, all the functions involved have bounds that depend only on $\nu(x)$ and $\ws(e)$, and as all of these are bounded uniformly from above and below on $\sn$, then $\lambda$ exists and is uniform for all directions. Hence, this result can be applied to give
\begin{equation} \label{heat.rn}
\frac{1}{K(t-s)^{n/2}}\e{-K|x-y|^2/(t-s)} \le \Gamma(t,x,s,y-\ws(e)(t-s)\,e)
\le\frac{K}{(t-s)^{n/2}}\e{-|x-y|^2/K(t-s)}.
\end{equation}
We'll use these to prove the following result for $x$ and $y$ separated from the boundary of the cylinder.
%
%----------------------------------------------------------------------------
\begin{lemma} \label{heat.kernel.bounds}%the heat kernel bounds on the cylinders
Given  $\delta\in(0,1)$, there exist constants $a$ and $K$ such that for all $R>0, x_0\in\rn, e\in\sn, t\in(s,s+R^2)$ and $x,y\in B_{\delta R}(x_0+\ws(e)t\,e)$,
\begin{equation*} %lower heat kernel bounds on the cylinders
\bar{\Gamma}(t,x,s,y-\ws(e)(t-s)\,e;x_0,R,e') \ge\frac{a}{K(t-s)^{\frac{n}{2}}}\e{-K|x-y|^2/(t-s)}.
\end{equation*}
\end{lemma}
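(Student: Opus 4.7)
The plan is to pass to the Lagrangian frame $\tilde{z}=z-\ws(e)r\,e$, which turns the moving cylinder $\mathcal{C}(s,x_0,R;e')$ into the fixed cylinder $(s,\infty)\times B_R(x_0)$. After this change of coordinates the $\ws(e)(t-s)e$-shift appearing in \eqref{heat.rn} is absorbed, so the translated full-space kernel $\tilde{\Gamma}$ enjoys the \emph{centered} Gaussian bounds
\begin{equation*}
\frac{1}{K(t-s)^{n/2}}\e{-K|\tilde{x}-\tilde{y}|^2/(t-s)}\le \tilde{\Gamma}(t,\tilde{x},s,\tilde{y})\le \frac{K}{(t-s)^{n/2}}\e{-|\tilde{x}-\tilde{y}|^2/K(t-s)},
\end{equation*}
with $K$ uniform in $e\in\sn$. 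The lemma reduces to proving, for the corresponding Dirichlet kernel $\tilde{\bar{\Gamma}}$ on the fixed ball $B_R(x_0)$, a lower bound of the same Gaussian form up to a multiplicative constant $a$, whenever $\tilde{x},\tilde{y}\in B_{\delta R}(x_0)$ and $t-s\in(0,R^2]$.

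The central tool will be the strong Markov decomposition
\begin{equation*}
\tilde{\Gamma}(t,\tilde{x},s,\tilde{y})=\tilde{\bar{\Gamma}}(t,\tilde{x},s,\tilde{y})+\mathbb{E}_{\tilde{y}}\bigl[\tilde{\Gamma}(t,\tilde{x},\tau,X_\tau)\,\mathbf{1}_{\tau<t}\bigr],
\end{equation*}
where $X_r$ is the diffusion generated by the moving-frame operator and $\tau$ is its first exit time from $B_R(x_0)$. It suffices to dominate the expectation on the right by half of the Gaussian lower bound for $\tilde{\Gamma}$. On $\{\tau<t\}$ the exit point $X_\tau\in\partial B_R(x_0)$ sits at distance at least $(1-\delta)R$ from $\tilde{x}\in B_{\delta R}(x_0)$, so combining the Norris upper bound with the observation that $r\mapsto r^{-n/2}\e{-cR^2/r}$ is maximized near $r\sim R^2$ gives
\begin{equation*}
\sup_{s<r<t}\tilde{\Gamma}(t,\tilde{x},r,X_\tau)\le \frac{C}{R^n}\e{-c(1-\delta)^2R^2/(t-s)}.
\end{equation*}

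Since $\tilde{y}$ likewise sits at distance $\ge(1-\delta)R$ from $\partial B_R(x_0)$, a Doob-type maximal inequality for $X_r$ built from the same Norris upper bound (using that to exit within time $t-s$ the process must travel at least $(1-\delta)R$) yields $\mathbb{P}_{\tilde{y}}(\tau<t)\le C\e{-c(1-\delta)^2R^2/(t-s)}$. Multiplying these two Gaussian tails, the boundary term is controlled by $R^{-n}\e{-c'(1-\delta)^2R^2/(t-s)}$.

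Because $|\tilde{x}-\tilde{y}|\le 2\delta R$, the Gaussian lower bound on $\tilde{\Gamma}$ is of order $(t-s)^{-n/2}\e{-4K\delta^2R^2/(t-s)}$, so enlarging the constant $K$ in the lemma so that $4K\delta^2<c'(1-\delta)^2$ makes the boundary contribution at most half the main term uniformly for $t-s\in(0,R^2]$; the remaining half yields the claimed lower bound once we transfer back to the original coordinates. The main obstacle will be the joint calibration of $a$, $K$ and $\delta$: the $K$ of the final lower bound must be taken large enough to absorb the $\delta$-dependent factors in the exponent while remaining independent of $R$, $x_0$, $e$ and the pair $(t,s)$. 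The most subtle ingredient is the exit-time estimate, since our diffusion is not Brownian motion and any bound on $\mathbb{P}_{\tilde{y}}(\tau<t)$ must rely solely on the direction-uniform Norris Gaussian estimate rather than any Brownian-specific reflection argument—this is where the uniformity in $e$ established after \eqref{heat.rn} is essential.
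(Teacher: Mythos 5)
Your decomposition is the right one: the strong Markov identity you write down is exactly the probabilistic form of the Green's-identity computation in the paper (the paper's boundary measure $d\mu(z,r)=-\nu(z)\nabla_z\bar{\Gamma}\cd\hat{n}\,dS_z\,dr$ is the joint law of the exit time and exit place, and its total mass being $<1$ is the trivial bound $\mathbb{P}_{\tilde{y}}(\tau<t)\le 1$ that you propose to refine). The genuine gap is in the final comparison, where you try to dominate the boundary term by half of the whole-space Gaussian lower bound simultaneously for all $x,y\in B_{\delta R}$ and all $t-s\le R^2$. The boundary term decays like $\e{-c'(1-\delta)^2R^2/(t-s)}$ with $c'$ tied to the \emph{fixed} Norris constant, while the lower bound for $\Gamma(t,x,s,y)$ at the worst case $|x-y|\approx 2\delta R$ decays like $\e{-4K_N\delta^2R^2/(t-s)}$, again with the fixed Norris constant $K_N$. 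For $\delta$ close to $1$ one has $c'(1-\delta)^2<4K_N\delta^2$, so as $t-s\to 0$ the boundary term is exponentially \emph{larger} than the lower bound on $\Gamma$, and $\Gamma$ minus the boundary term need not be positive. ``Enlarging the constant $K$ in the lemma'' cannot repair this (and in fact enlarging $K$ makes your inequality $4K\delta^2<c'(1-\delta)^2$ harder, not easier): weakening the target bound does not change the sign of the difference you must bound from below; the constants in competition are the Norris constants, not the one displayed in the conclusion.

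The paper avoids this by first establishing the bound only in the restricted regime $|x-y|\le\epsilon R$ and $t-s\le\epsilon^2R^2$ with $\epsilon$ small relative to $1-\delta$ — there the two exponents can be ordered and the prefactor $1-K^2\e{-\frac{K}{t}(R^2(1-\delta)^2/K^2-|x-y|^2)}$ kept above $1/2$ — and then extending to all $x,y\in B_{\delta R}$ and $t-s\le R^2$ by the Fabes--Stroock chaining argument: connect $x$ to $y$ by a bounded number of intermediate points and iterate the short-time, short-distance bound through Chapman--Kolmogorov, at the cost of the constant $a$. Your proposal omits this chaining step, and without it the claimed uniformity over $\delta\in(0,1)$ and $t-s\le R^2$ fails. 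Two minor further points: your bound $\sup_r\tilde{\Gamma}\le CR^{-n}\e{-c(1-\delta)^2R^2/(t-s)}$ understates the supremum by the factor $(R^2/(t-s))^{n/2}$, which is harmless only after absorbing it into the exponential by shrinking $c$; and the extra factor $\mathbb{P}_{\tilde{y}}(\tau<t)$ is a refinement the paper does not need and does not rescue the comparison for $\delta$ near $1$.
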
 
%----------------------------------------------------------------------------
%
\begin{proof}
The proof is based on the analysis in \cite{fs}. Without loss of generality, we may assume $s=0$ and $x_0=0$. Let $\mathcal{C}_r=B_R(r\ws(e)\,e)$, and let $\phi(r,z)$ be a test function. From \eqref{p.equation}, 
\begin{equation*} % integrating equation for \rho
\begin{array}{lll}
0&=& \ds\int\limits_{t_1}^{t_2}\ds\int\limits_{\disk{r}}(\nu(z)\rho_r-\nabla_z\cd(\nu(z)\nabla_z\rho)+F(z)\cd\nabla_z\rho)\phi\,dz\,dr  \\
&=& -\ds\int\limits_{t_1}^{t_2}\ds\int\limits_{\disk{r}}(\nu(z)\phi_r+\nabla_z\cd(\nu(z)\nabla_z\phi)+F(z)\cd\nabla_z\phi)\rho\,dz\,dr\\
&& + \ds\int\limits_{t_1}^{t_2}\ds\int\limits_{\partial\disk{r}}(\nu(z)\rho\nabla_z\phi - \nu(z)\phi\nabla_z\rho + \rho\phi F(z))\cd \hat{n}(z)\,dS_z\,dr + \left.\ds\int\limits_{\disk{r}} \nu(z)\rho\phi\,dz\right|_{t_1}^{t_2} ,
\end{array}
\end{equation*}
where $dS_z$ is the surface area of $\disk{r}$, and $\hat{n}(z)$ its outward unit normal. Now if $\rho=0$ on the boundary of $\disk{r}$, and $\phi$ solves the adjoint equation, we're left only with
\begin{equation} \label{heat.int} % when \phi solves the adjoint equation
\ds\int\limits_{t_1}^{t_2}\ds\int\limits_{\partial\disk{r}}\nu(z)\phi\nabla_z\rho\cd \hat{n}(z)\,dS_z\,dr = \ds\int\limits_{\disk{t_2}} \nu(z)\rho\phi\,dz-\ds\int\limits_{\disk{t_1}} \nu(z)\rho\phi\,dz.
\end{equation}
Fixing $t>0$ and $x\in\disk{t}$, $y\in\disk{0}$, we can take $\rho(r,z)=\bar{\Gamma}(r,z,0,y)$, and $\phi(r,z)=\Gamma(t,x,r,z)$, as the heat kernel solves the backwards adjoint equation in its initial variables for $0<r<t$. Plugging these in \eqref{heat.int}, and keeping in mind that for any continuous $f$,
\begin{equation*} %limits of heat kernels
\lim\limits_{t_2\uparrow t}\int\limits_{\disk{t_2}}\nu(z)\Gamma(t,x,t_2,z)f(z)\,dz=f(x), \quad \text{ and }
\quad \lim\limits_{t_1\downarrow 0}\int\limits_{\disk{t_1}}\nu(z)\bar{\Gamma}(t_1,z,0,y)f(z)\,dz = f(y),
\end{equation*}
we obtain
\begin{equation} \label{heat.id} %heat kernel identity
\bar{\Gamma}(t,x,0,y)=\Gamma(t,x,0,y)+\int\limits_0^t\int\limits_{\partial\disk{r}}\nu(z)\Gamma(t,x,r,z)\nabla_z\bar{\Gamma}(r,z,0,y)\cd \hat{n}(z)\,dS_z\,dr.
\end{equation}
Note that $\nabla_z\bar{\Gamma}(r,z,0,y)\cd \hat{n}(z)\le 0$ on $\partial\disk{r}$, therefore we can define a positive measure
\begin{equation*} %measure
d\mu(z,r)=-\nu(z)\nabla_z\bar{\Gamma}(r,z,0,y)\cd \hat{n}(z)\,dS_z\,dr \ \text{ on } \ \mathcal{C}(t)=\mathcal{C}(t,0,R;e').
\end{equation*}
To obtain information for this measure, we now take $\phi(r,z)=1$. \eqref{heat.int} then gives
\begin{equation*} %identity with \phi=1
\begin{array}{ll}
\mu(\mathcal{C}(t))& =-\ds\int\limits_{0}^{t}\ds\int\limits_{\partial\disk{r}}\nu(z)\nabla_z\bar{\Gamma}(r,z,0,y)\cd \hat{n}(z)\,dS_z\,dr = 1-\ds\int\limits_{\disk{t}}\nu(z)\bar{\Gamma}(t,z,0,y)\,dz <1.
\end{array}
\end{equation*}
Using this and \eqref{Norris} in \eqref{heat.id} for $x,y\in B_R(0)=D_0$ gives
\begin{equation*} % lower bound on \Gamma
\hat{\Gamma}(t,x,0,y-\ws(e)t\,e)
%=\Gamma(t,x,0,y-\ws(e)t\,e)-\int\limits_{\disk{t}}\Gamma(t,x+\ws(e)t\,e,r,z)\,d\mu(r,z) 
\ge\Gamma(t,x,0,y-\ws(e)t\,e)-\sup\limits_{z\in\disk{r}, 0\le r\le t} \Gamma(t,x+\ws(e)t\,e,r,z) .
\end{equation*}
In particular, if $x$ is away from the boundary of the cylinder, say $x\in B_{\delta R}(0)$, then 
\begin{equation*} % lower exponential bound on \Gamma
\hat{\Gamma}(t,x,0,y-\ws(e)t\,e)\ge \frac{1}{Kt^{n/2}}\e{-K|x-y|^2/t}-\sup\limits_{0\le \tau\le t}\frac{K}{\tau^{n/2}}\e{-R^2(1-\delta)^2/K\tau}.
\end{equation*}
The function on the left attains its maximum at $\tau_{max}=\frac{2(1-\delta)^2}{Kn}R^2$, therefore for any $\epsilon<\frac{\sqrt{2}}{\sqrt{Kn}}(1-\delta)$, if $t\le\epsilon^2R^2$, then $t\le\tau_{max}$, so 
\begin{equation*} %almost final bound on \Gamma
\begin{array}{ll}
\hat{\Gamma}(t,x,0,y-\ws(e)t\,e) &\ge \ds\frac{1}{Kt^{n/2}}\e{-\frac{K|x-y|^2}{t}}-\ds\frac{K}{t^{n/2}}\e{-\frac{R^2(1-\delta)^2}{Kt}}\\ &=\ds\frac{1}{Kt^{n/2}}\e{-\frac{K|x-y|^2}{t}}\left(1-K^2\e{-\frac{K}{t}\left(\frac{R^2(1-\delta)^2}{K^2}-|x-y|^2\right)}\right).
\end{array}
\end{equation*}
If furthermore $|x-y|\le\epsilon R$, and we shrink $\epsilon$ a bit to $\epsilon<\frac{1-\delta}{\sqrt{2Kn\ln(2K^2)}}$, then the expression in the brackets is at least $1/2$, leading to 
\begin{equation*} % \Gamma bound on |x-y|<\epsilon^2R^2
\hat{\Gamma}(t,x,0,y-\ws(e)t\,e) \ge \frac{1}{2Kt^{n/2}}\e{-\frac{K|x-y|^2}{t}}.
\end{equation*}
This bound was obtained for $x\in B_{\delta R}(0)$, with $|x-y|\le\epsilon^2R^2$ and $t\le\epsilon^2R^2$. Finally using a chaining argument analogous to the one in \cite{fs}, we can extend the bound to 
\begin{equation*} %final bound on \Gamma
\bar{\Gamma}(t,x,s,y-\ws(e)t\,e;x_0,R,e') \ge\frac{a}{Kt^{n/2}}\e{-\frac{K|x-y|^2}{t}}.
\end{equation*}
for all $x,y\in B_{\delta R}(0)$ and $t\le R^2$.
\end{proof}

\subsubsection{Lower bound on a portion of the ball}

The next step towards a lower bound on $p$ on the whole ball of radius $\rho \sqrt{t}$ is the following bound that holds instead on a large enough portion of the ball.
%
%----------------------------------------------------------------------
\begin{prop}\label{lower.estimate}
There exists a time $T_0>0$ and constants $c_0>0, a>0$ and $A,A',B>0$ depending only on $u_0(x)$ such that for all $t\ge T_0$, there exists a set $D^+_t\subseteq \{x: A'\sqrt{t}\le x\cd e'\le A\sqrt{t}, x^\perp\in B_{B\sqrt{t}}\}$ with $|D^+_t|\ge \frac{a}{t^{\frac{n}{2}}}$, and
\begin{equation} \label{p.lower}
p(t,x+\ws(e)t\,e)\ge\frac{c_0}{t^{\frac{n+1}{2}}} \quad \text{ for all } x\in D^+_t.
\end{equation}
\end{prop}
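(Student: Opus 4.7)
My plan is to combine the conserved linear moment $I(t)\equiv I(0)$ from Lemma \ref{lemma.backwards} with the upper bounds on $p$ (the Dirichlet-corrected polynomial bound from Proposition \ref{linearbounds} together with the whole-space Gaussian heat-kernel estimate \eqref{heat.rn}) via a Chebyshev-type pigeonhole. Concretely, Lemma \ref{lemma.backwards} yields $\int_{\h{t}}\nu(x)f(t,x)p(t,x)\,dx \equiv I(0)>0$; since the test function $f$ satisfies $f(t,x)\asymp x\cd e'-\cs(e')t$ and $\nu$ is uniformly bounded, passing to the moving frame $\xi = x - \ws(e)t\,e$ gives
\begin{equation*}
\int_{\xi\cd e'>0} (\xi\cd e')\, p(t,\ws(e)t\,e+\xi)\,d\xi \ge c_1 > 0,
\end{equation*}
uniformly in $t$, with $c_1$ depending only on the initial datum.

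Next, I would show that this mass is essentially supported in $R_t := \{\xi : A'\sqrt{t} \le \xi\cd e' \le A\sqrt{t},\ |\xi^\perp|\le B\sqrt{t}\}$ by splitting the moment integral into three regions. On the slab $\{0<\xi\cd e'\le A'\sqrt{t}\}$, the bound $p\le C(\xi\cd e')/t^{n/2+1}$ from Proposition \ref{linearbounds} yields, after rescaling $\xi = \sqrt{t}z$, a contribution of size $O((A')^3)$ that is independent of $t$ and negligible for $A'$ small. For the outer regions $\{\xi\cd e'\ge A\sqrt{t}\}$ and $\{|\xi^\perp|\ge B\sqrt{t}\}$, I would use the Gaussian kernel bound $p(t,\ws(e)t\,e+\xi)\le K t^{-n/2} e^{-c|\xi|^2/t}\|p_0\|_1$, obtained by dominating $p$ by the corresponding whole-space solution and invoking \eqref{heat.rn}; after the same rescaling the contribution becomes $\int_{|z\cd e'|\ge A\text{ or }|z^\perp|\ge B}|z\cd e'|\,e^{-c|z|^2}\,dz$, which is $o(1)$ as $A,B\to\infty$. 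Choosing $A'$ small and $A,B$ large, all depending only on $v_0$, one concludes $\int_{R_t}(\xi\cd e')p(t,\ws(e)t\,e+\xi)\,d\xi \ge c_1/2$.

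Finally, a Chebyshev-type pigeonhole on $R_t$ closes the argument. On $R_t$ we have $p\le CA/t^{(n+1)/2}$ and $|R_t|\le C_0 t^{n/2}$. Set
\begin{equation*}
D^+_t := \{\xi \in R_t :\ p(t,\ws(e)t\,e+\xi) \ge c_0/t^{(n+1)/2}\}.
\end{equation*}
For $c_0$ small enough, the complement contributes $\int_{R_t\setminus D^+_t}(\xi\cd e')p\,d\xi \le A\sqrt{t}\cdot(c_0/t^{(n+1)/2})\cdot|R_t| \le c_1/4$, so that $\int_{D^+_t}(\xi\cd e')p\,d\xi\ge c_1/4$. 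Combined with the pointwise bound $(\xi\cd e')p\le CA^2/t^{n/2}$ on $D^+_t$, this forces $|D^+_t|\ge c\, t^{n/2}$, which is much stronger than the claimed $a/t^{n/2}$.

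The main obstacle is the Gaussian localization in the second step: the polynomial bound $p\le C(\xi\cd e')/t^{n/2+1}$ alone is not integrable against $(\xi\cd e')$ over the half-space, so the explicit $e^{-c|\xi|^2/t}$ factor is indispensable. Producing it requires the whole-space estimate \eqref{heat.rn} and verification that the constants furnished by Lemma \ref{lemma.Norris} are uniform in $e'\in\sn$, which follows from the uniform positivity of $\nu$ and the boundedness of $\ws(e)$ on the sphere.
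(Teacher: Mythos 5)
Your overall architecture --- the conserved linear moment from Lemma \ref{lemma.backwards}, localization of that moment to a box of side $O(\sqrt{t})$, then a Chebyshev pigeonhole --- is exactly the paper's, and your inner-slab estimate and final pigeonhole (including the observation that one in fact obtains $|D_t^+|\gtrsim t^{n/2}$ rather than $t^{-n/2}$) match the paper's proof. The gap is in the localization step for the outer regions. You bound $p$ there by the corresponding whole-space solution and invoke \eqref{heat.rn}, giving $p(t,\ws(e)t\,e+\xi)\le Kt^{-n/2}\e{-c|\xi|^2/t}$ for $\xi$ outside the support of $p_0$. But after the rescaling $\xi=\sqrt{t}\,z$ the tail of the moment integral is
\begin{equation*}
\int\limits_{\{|\xi^\perp|\ge B\sqrt{t}\}}(\xi\cd e')\,\frac{K}{t^{n/2}}\,\e{-c|\xi|^2/t}\,d\xi
= K\sqrt{t}\int\limits_{\{|z^\perp|\ge B\}}(z\cd e')\,\e{-c|z|^2}\,dz,
\end{equation*}
so there is a factor $\sqrt{t}$ that your computation drops: for fixed $A,B$ the outer contribution grows like $\sqrt{t}$ and cannot be made $\le c_1/2$ uniformly in $t$. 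The root cause is that the whole-space Gaussian bound does not see the Dirichlet boundary and is weaker than the true decay $p\sim(\xi\cd e')\,t^{-(n/2+1)}\e{-c|\xi|^2/t}$ by exactly the factor $(\xi\cd e')/t\sim t^{-1/2}$ in the relevant range; moreover, no interpolation between $Kt^{-n/2}\e{-c|\xi|^2/t}$ and the boundary bound $C(\xi\cd e')\,t^{-(n/2+1)}$ from Proposition \ref{linearbounds} recovers an $O(1)$ tail, since only the pure boundary bound has the correct $t$-power and it carries no Gaussian factor.

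This failure is precisely why the paper builds the exponential-moment machinery: the eigenfunctions $\eta_\alpha$ of Lemma \ref{lemma.eta}, comparable to $\e{\alpha\cd x}\bigl(1-\e{-2\alpha\cd e'\,x\cd e'}\bigr)/(\alpha\cd e')$ and hence retaining the boundary factor, the Nash inequality lifted to $\mathbb{R}^{n+2}$, and the resulting weighted $L^2$ estimate \eqref{int.bound} applied with $\alpha=\xi/\sqrt{t}$. The outer regions are then controlled by Cauchy--Schwarz against that weighted $L^2$ norm, split over the $2n-1$ sectors $R_t^\sigma$ and $R_t^{e_1}$, which is what yields a $t$-uniform smallness $\phi(A)$, $\phi(B)$. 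To repair your argument you would need either to reproduce this exponential-moment route or to prove a pointwise Gaussian upper bound for the moving half-space kernel that carries the extra factor $(x\cd e'-\cs(e')t)/t$; the whole-space estimate \eqref{heat.rn} alone does not supply it.
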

%----------------------------------------------------------------------
%
Recall that $p$ is a function satisfying 
\eal{plinear}{ll}{
p_t=\ds\frac{1}{\nu(x;e')}\nabla\cd(\nu(x;e')\nabla p)-\ds\frac{F(x;e')}{\nu(x;e')}\cd\nabla p, &\quad x\in \h{t}, t>0,\\
p(t,x)=0, &\quad x\in\partial\h{t}, t>0.
}
The proof follows the same ideas as in \cite{hnrr2}. The idea is to find an upper bound for the exponential moments of $p$, and use them to control from above the $L^1$ norm of $x\cd e' p(t,x+\ws(e)t\,e)$ on $\{x\cd e'\ge N_u\sqrt{t}\}$. The same norm can be controlled from above on balls of radius $\sqrt{t}$ using the first part of proposition \ref{linearbounds}. Combining the two, we can ensure \eqref{p.lower} on a large portion of a ball of radius $O(\sqrt{t})$.\\

\noindent\textbf{Connection to the heat equation in $\R^{n+2}$.} Our fist observation is that for $x\sim O(\sqrt{t})$, $p(t,x+\ws(e)t\,e)$ should behave like $\frac{x\cd e'}{t^{\frac{n}{2}+1}}\e{-C|x|^2/t}$, and $\frac{1}{t^{\frac{n}{2}+1}}\e{-C|x|^2/t}$ appears to satisfy the asymptotics of a solution of the heat equation in $\mathbb{R}^{n+2}$. This suggests that we can try to understand the behavior of $p(t,x+\ws(e)t\,e)/(x\cd e')$. However as the function in the denominator may not work well with the operators in the problem, we'll instead look for a function that enjoys the same asymptotics, while solving a compatible equation.
%
%---------------------------------------------------------------------
\begin{lemma} \label{lemma.zeta} % \zeta ~ solution of heat equation in R^{n+1}
There exists a function $\zeta(t,x)$ and a constant $M>1$ such that 
\eal{zeta}{ll} %equation for zeta
{
\zeta_t=\frac{1}{\nu(x)}\nabla\cd(\nu(x)\nabla\zeta)-\frac{F(x)}{\nu(x)}\cd\nabla\zeta, & \quad x\in\h{t}, t\in\R,\\
\zeta(t,x)=0, & \quad x\in\partial\h{t}, t\in\R,
}
and
\begin{equation*} %linear bounds for \zeta
M^{-1}(x-\ws(e)t\,e)\cd e'\le \zeta(t,x)\le M(x-\ws(e)t\,e)\cd e'.
\end{equation*}
\end{lemma}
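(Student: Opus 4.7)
The plan is to mirror the construction of Lemma~\ref{lemma.backwards}, running time forward rather than backward. Note that the identity $(x-\ws(e)t\,e)\cd e' = x\cd e' - \cs(e')t$ (a consequence of the minimizer relation $\cs(e') = \ws(e)(e\cd e')$) makes the two-sided bounds in the lemma equivalent to $M^{-1}r \le \zeta \le M r$ with $r := x\cd e' - \cs(e')t$, so I will work with $r$ throughout.

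I would first build a particular global solution of \eqref{zeta} of the form $G(t,x) = r + g(x) = (x\cd e' - \cs(e')t) + g(x)$, with $g$ periodic. Substituting into \eqref{zeta} yields the periodic elliptic equation
\begin{equation*}
\nabla\cd(\nu\nabla g) - F\cd\nabla g = -\cs(e')\nu - \nabla\nu\cd e' + F\cd e'.
\end{equation*}
Since $L(e')$ has principal eigenvalue $0$ with eigenfunction $1$, Fredholm solvability reduces to the integral of the right-hand side vanishing over $\tn$, i.e.\ $-\cs(e') + \ws(e)(e\cd e') = 0$, which is exactly Definition~\ref{min.def}. Hence $g$ exists and is unique up to an additive constant; I normalize it so that $\max_{\tn} g = 0$, which makes $G \le 0$ on $\partial\h{t}$.

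Next I build an eternal approximating sequence: let $\zeta^{(n)}$ solve \eqref{zeta} on $\{t \ge -n\}$ with Dirichlet data $0$ on $\partial\h{t}$ and initial datum $\zeta^{(n)}(-n,x) = \max\{G(-n,x),0\}$. The maximum principle gives $\zeta^{(n)} \ge 0$, and two comparisons (with $G$ from below and $G - \min_{\tn} g$ from above) yield the uniform bounds
\begin{equation*}
r + g(x) \le \zeta^{(n)}(t,x) \le r + (g(x) - \min_{\tn} g),
\end{equation*}
so for $r \ge 2\|g\|_\infty$ we already have $M^{-1} r \le \zeta^{(n)} \le M r$ uniformly in $n$. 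Close to $\partial\h{t}$, comparison alone only delivers $\zeta^{(n)} \ge 0$, so I would invoke the parabolic Hopf lemma: because $\zeta^{(n)}$ is uniformly bounded below by a positive constant on the slab $\{r = 1\}$ (by the previous step), and the change of variables $y = x - \cs(e')t\,e'$ flattens $\partial\h{t}$ into a fixed hyperplane with periodic, time-independent coefficients, standard boundary Schauder theory produces $\partial_{e'}\zeta^{(n)} \ge c_0 > 0$ on $\partial\h{t}$ uniformly for $t \in [-n+1, n-1]$. This extends the linear lower bound $\zeta^{(n)} \ge M^{-1} r$ down to the boundary, and the matching upper bound $\zeta^{(n)} \le M r$ follows from the same boundary gradient control.

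Finally, interior $C^{2,1}_{\text{loc}}$ parabolic estimates make $\{\zeta^{(n)}\}$ precompact, so a subsequence converges locally uniformly to a limit $\zeta$ that inherits \eqref{zeta}, the zero boundary condition, and the two-sided linear bounds. I expect the main obstacle to be the uniformity in $n$ of the Hopf constant $c_0$; the flattening of $\partial\h{t}$ into a fixed half-space with stationary periodic coefficients is precisely what makes that uniformity transparent, reducing matters to a time-independent boundary estimate that depends only on $\nu$, $F$, and the dimension.
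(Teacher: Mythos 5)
Your proposal is correct and follows essentially the same route as the paper: the paper also writes the candidate as $(x-\ws(e)t\,e)\cdot e' + z(x)$ with a periodic corrector $z$ whose solvability comes from \eqref{normalized} and \eqref{drift}, normalizes $z\le 0$ on the boundary, defines the approximating sequence $\zeta^{(n)}$ with initial data $\max\{0,Z(-n,x)\}$ at $t=-n$, and obtains the two-sided linear bounds by comparison away from the boundary plus a uniform boundary-gradient (Hopf/parabolic regularity) estimate before passing to the limit, exactly as you describe (the paper in fact says the bounds and extraction "follow verbatim the proof of Lemma \ref{lemma.backwards}").
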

%--------------------------------------------------------------------
%
\begin{proof}
The idea is to start as in the proof of lemma \ref{lemma.backwards} with a solution of the form 
\begin{equation*} %solution without the boundary condition
Z(t,x)=(x-\ws(e)t\,e)\cd e'+z(x),
\end{equation*}
with $z(x)$ periodic, and use this to control a subsequence of solutions that vanish in the boundary as well. Putting $Z$ in \eqref{zeta} implies that $\zeta$ should satisfy
\begin{equation*} % equation for z
\nabla\cd(\nu(x)z) -F(x)\cd \nabla z
= -\nabla\cd(\nu(x)e')+F(x)\cd e'-\nu(x)\ws(e)e\cd e'.
\end{equation*}
This equation has a periodic solution $z(x)$ since the integral of the right side is $0$ because of \eqref{normalized} and \eqref{drift}. Again, subtracting a constant if necessary, we may assume $z(x)\le 0$ on $\partial\h{t}$, and define the approximating sequence $\{\zeta^{(n)}(t,x)\}_n$ via
\ea{ll}{ % approximating sequence for \zeta
\zeta^{(n)}_t=\frac{1}{\nu(x)}\nabla\cd(\nu(x)\nabla\zeta^{(n)})-\frac{F(x)}{\nu(x)}\cd\nabla\zeta^{(n)}, & \quad x\in\h{t}, t>-n,\\
\zeta^{(n)}(t,x)=0, & \quad x\in\partial\h{t}, t>-n,\\
\zeta^{(n)}(-n,x)=\max\{0,Z(-n,x)\}, & \quad x\in\h{-n}
}
The bounds for $\zeta^{(n)}$ and the extraction of a subsequence converging to a solution of \eqref{zeta} is a consequence of the maximum principle and parabolic regularity, and follows verbatim the proof of lemma \ref{lemma.backwards}.
\end{proof}

Now define $q(t,x)=\frac{p(t,x)}{\zeta(t,x)}$. This solves
\begin{equation*}
q_t=\frac{1}{\nu(x)}\nabla\cd(\nu(x)\nabla q)+\left(2\frac{\nabla\zeta}{\zeta}-\frac{F(x)}{\nu(x)}\right)\cd\nabla q.
\end{equation*}
Recall that the linear moment for $p$ remains bounded, as it is controlled by 
\begin{equation*} % I(t)
I(t)=\ih{t}\nu(x)f(t,x)p(t,x)\,dx =I(0),
\end{equation*}
where $f(t,x)$ is the function from lemma \ref{lemma.backwards}. In a similar way, we define the exponential moments
\begin{equation}\label{i.alpha} %I_{\alpha}(t)
I_{\alpha}(t)=\ih{t}\nu(x)\eta_{\alpha}(t,x)p(t,x)\,dx,
\end{equation}
with $\eta_{\alpha}$ expected to grow like $\e{\alpha\cd x}$ for $|x|$ large and $x\cd e'$ bounded away from zero. %$\e{(\alpha\cd e')(x-\ws(e)t\,e)\cd e'}\e{\alpha^{\perp}\cd x^\perp}$ for $x\cd e'\to\infty$. Here $\alpha^\perp=\alpha-(\alpha\cd e)e$ is the projection on the vector in the components perpendicular to $e$, and $\alpha^\perp\cd x^\perp=\alpha\cd x-\alpha\cd e\ x\cd e$
Then $I_{\alpha}$ satisfies
\begin{equation*} %differentiate I_{\alpha}
\begin{array}{ll}
\ds\frac{dI_{\alpha}(t)}{dt} &= \ds\ih{t} \nu(x)(\eta_{\alpha})_t + \eta_{\alpha}(\nabla\cd(\nu(x)\nabla p)-F(x)\cd\nabla p)\,dx\\
&=\ds\ih{t}\nu(x)\left((\eta_{\alpha})_t+\ds\frac{1}{\nu(x)}\nabla\cd(\nu(x)\nabla\eta_{\alpha} +\ds\frac{F(x)}{\nu(x)}\cd\nabla\eta_{\alpha}\right)\,dx +\ds\int\limits_{\partial\h{t}}\nu(x)\eta_{\alpha}\nabla p\cd \hat{n}(x)\,dS_x.
\end{array}
\end{equation*}
The adjoint equation appeared before as well, but associated with linearly growing functions, therefore we expected that those functions to be exact solutions of the equation. In this case, we want $\eta_{\alpha}$ to be exponentially growing, therefore we expect it instead to be an eigenfunction for the adjoint operator. This result is described in the following lemma, whose proof will be postponed for later.
%
%----------------------------------------------------------------------
\begin{lemma} \label{lemma.eta} %existence of exponential eigenfunctions
There exists $C>1, a_0>0$ and a positive-definite matrix $\Lambda$ such that for every $\alpha\in\rn$ satisfying
\begin{equation}\label{alpha.cond}
\alpha\cd e'>n^{-1/2}|\alpha|, \quad 2^{-1}\alpha\cd e'\le\frac{\alpha\cd\Lambda e'}{e'\cd\Lambda e'}\le 2\alpha\cd e' \quad \text{and} \quad |\alpha|\le a_0,
\end{equation}
there exist $\lambda(\alpha)$ and $\eta_{\alpha}(t,x)$ satisfying
\eal{eta.eq}{ll}{ % the equation for \eta_{\alpha}
(\eta_{\alpha})_t+\ds\frac{1}{\nu(x)}\nabla\cd(\nu(x)\nabla\eta_{\alpha}) +\ds\frac{F(x)}{\nu(x)}\cd\nabla\eta_{\alpha}=\lambda(\alpha)\eta_{\alpha}, & \quad t\in\R, \ x\in\h{t},\\
\eta_{\alpha}(t,x)=0, & \quad t\in\R, x\in\partial\h{t},
}
with
%\begin{equation} \label{eta.bounds} % exponential bounds on \eta_{\alpha}
%C^{-1}\frac{\e{\alpha\cd e' x\cd e'}-\e{-\alpha\cd e' x\cd e'}}{\alpha\cd e'}\e{\alpha^\perp\cd x^\perp} \le\eta_{\alpha}(t,x+\ws(e)t\,e) \le C\frac{\e{\alpha\cd e' x\cd e'}-\e{-\alpha\cd e' x\cd e'}}{\alpha\cd e'}\e{\alpha^\perp\cd x^\perp},
%\end{equation}
\begin{equation} \label{eta.bounds} % exponential bounds on \eta_{\alpha}
C^{-1}\e{\alpha\cd x}\frac{1-\e{-2\alpha\cd e'x\cd e'}}{\alpha\cd e'} \le\eta_{\alpha}(t,x+\ws(e)t\,e) \le C\e{\alpha\cd x}\frac{1-\e{-2\alpha\cd e' x\cd e'}}{\alpha\cd e'},
%C^{-1}\e{\alpha\cd x}\frac{1-\e{-2\frac{\alpha\cd\Lambda e'}{e'\cd\Lambda e'}x\cd e'}}{\alpha\cd\Lambda e'} \le\eta_{\alpha}(t,x+\ws(e)t\,e) \le C\e{\alpha\cd x}\frac{1-\e{-2\frac{\alpha\cd\Lambda e'}{e'\cd\Lambda e'}x\cd e'}}{\alpha\cd\Lambda e'},
\end{equation}
and
\begin{equation} \label{lambda}
\lambda(\alpha)=\alpha\cd\Lambda\alpha+O(|\alpha|^3).
\end{equation}
\end{lemma}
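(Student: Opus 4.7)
The plan is to construct $\eta_\alpha$ as a generalized principal eigenfunction of the parabolic operator $\partial_t+L^*$ on the moving half-space $\{x\cd e'>\cs(e')t\}$, prescribed to grow like $\e{\alpha\cd x}$ at spatial infinity and to vanish on the moving boundary, and then to read $\lambda(\alpha)$ off from the associated dispersion relation. The construction perturbs off the $\alpha=0$ case, where $\eta_0$ essentially coincides with the linearly growing function $\zeta$ of Lemma \ref{lemma.zeta} and $\lambda(0)=0$.

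The first step is to analyze the associated twisted periodic eigenvalue problem. For $\alpha\in\rn$ near the origin, the operator $\mathcal{L}_\alpha^*\phi:=\e{-\alpha\cd x}L^*(\e{\alpha\cd x}\phi)$ on $\tn$-periodic functions expands as
\begin{equation*}
\mathcal{L}_\alpha^*\phi=L^*\phi+2\alpha\cd\nabla\phi+\Bigl(|\alpha|^2+\alpha\cd\kappa+\tfrac{2F\cd\alpha}{\nu}\Bigr)\phi,
\end{equation*}
and Krein-Rutman together with analytic perturbation theory produce an analytic family $(\phi(\cd;\alpha),\Lambda^*(\alpha))$ of positive periodic eigenfunctions and simple principal eigenvalues, with $\phi(\cd;0)\equiv 1$ and $\Lambda^*(0)=0$. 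Differentiating the eigenvalue relation at $\alpha=0$ and pairing against $\nu$ (which is the left eigenvector of $L^*$ with respect to Lebesgue measure, since $\Delta\nu-\nabla\cd(\nu(\kappa+2F/\nu))=\Delta\nu-\nabla\cd(\nu\kappa)-2\nabla\cd F=0$ by \eqref{kr} and the divergence-free condition \eqref{drift}), while using $\int_{\tn}\nu\kappa\,dx=-\ws(e)e$ and $\int_{\tn} F\,dx=\ws(e)e$, one obtains
\begin{equation*}
\nabla\Lambda^*(0)=\ws(e)e,\qquad D^2\Lambda^*(0)=:2\Lambda,
\end{equation*}
where $\Lambda$ is positive-definite by strict convexity of principal eigenvalues under exponential twisting; this is the $\Lambda$ appearing in the statement.

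Next, construct $\eta_\alpha$ via a limiting procedure modeled on the proofs of Lemmas \ref{lemma.backwards} and \ref{lemma.zeta}. The bulk profile $u_\alpha(t,x):=\e{-\Lambda^*(\alpha)t+\alpha\cd x}\phi(x;\alpha)$ satisfies $(\partial_t+L^*)u_\alpha=0$ in the interior but does not vanish on the moving boundary; the envelope \eqref{eta.bounds} suggests combining $u_\alpha$ with its ``reflected'' counterpart of rate $\alpha'\approx\alpha-2\frac{\alpha\cd\Lambda e'}{e'\cd\Lambda e'}\,e'$, the unique $\alpha'\ne\alpha$ with $\alpha'-\alpha\parallel e'$ whose bulk exponential has the same time-dependence as that of $\alpha$ on $\{x\cd e'=\cs(e')t\}$. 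Define $\eta_\alpha^{(n)}$ as the solution of \eqref{eta.eq} on $t\in(-n,\infty)$ with terminal data at $t=-n$ given by the full envelope \eqref{eta.bounds} (multiplied by $\e{\lambda(\alpha)(-n)}$). The two envelopes themselves serve as super- and sub-solutions after suitable multiplicative adjustments, so parabolic comparison traps $\eta_\alpha^{(n)}$ between them uniformly in $n$, and parabolic regularity extracts a subsequential limit $\eta_\alpha$ satisfying \eqref{eta.eq} together with the claimed bounds. Conditions \eqref{alpha.cond} are precisely what ensure that $\alpha'$ is well-defined, close to $\alpha$, and that the envelopes come in the correct order.

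Finally, $\lambda(\alpha)$ is pinned down by requiring that $\eta_\alpha$, translated to the moving frame centered at $\ws(e)t\,e$, remain bounded in $t$ (consistent with the stationary character of \eqref{eta.bounds}); this forces $\lambda(\alpha)=\Lambda^*(\alpha)-\ws(e)e\cd\alpha$, which by Step 1 expands as $\lambda(\alpha)=\alpha\cd\Lambda\alpha+O(|\alpha|^3)$, giving \eqref{lambda}. The main obstacle will be the construction step: because $\phi(\cd;\alpha)$ and $\phi(\cd;\alpha')$ are not proportional, no finite linear combination of the two bulk exponentials vanishes on the moving hyperplane, so a closed-form solution is unavailable. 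The limiting argument must therefore rely on carefully ordered explicit envelopes built from the quadratic approximation of $\Lambda^*$, and verifying that the lower envelope is genuinely a sub-solution (so that the comparison sandwich closes) is delicate — this is exactly where the alignment condition on $\alpha\cd\Lambda e'$ in \eqref{alpha.cond} plays its decisive role.
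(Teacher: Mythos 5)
Your overall strategy is the same as the paper's: a twisted periodic (Krein--Rutman) eigenvalue problem produces the analytic family $\bar{\eta}_\alpha$, $\lambda(\alpha)$, with $\Lambda$ as half the Hessian at $\alpha=0$ and \eqref{lambda} following from two differentiations paired against $\nu\,dx$ (which is indeed the invariant measure, since $F$ is divergence-free); and $\eta_\alpha$ is built by superposing the bulk mode at $\alpha$ with a reflected mode at $\beta\approx\alpha-2\frac{\alpha\cd\Lambda e'}{e'\cd\Lambda e'}e'$, then passing to the limit of finite-horizon approximations via comparison and parabolic regularity. Your identification of the corrected reflection vector and of the role of \eqref{alpha.cond} is exactly right.

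However, the construction step contains a genuine gap, which you yourself flag but do not resolve. You propose to trap $\eta_\alpha^{(n)}$ between the \emph{envelopes} of \eqref{eta.bounds}, i.e.\ between multiples of $\e{\alpha\cd x}\frac{1-\e{-2\alpha\cd e'x\cd e'}}{\alpha\cd e'}$; but these functions are not solutions (nor sub-/super-solutions up to controllable errors) of \eqref{eta.eq} in the periodic medium, so the comparison sandwich does not close as written — precisely the ``delicate'' point you acknowledge. The paper's resolution is that one should not ask the two-mode combination to vanish on the moving hyperplane at all. One first adjusts $\beta$ by an $O(|\alpha|^2)$ shift along $e'$ so that $\lambda(\beta)=\lambda(\alpha)$ \emph{exactly} (Lemma \ref{beta}); then every combination $H=\frac{\e{\alpha\cd(x-\ws(e)t\,e)}}{\alpha\cd\Lambda e'}\bar{\eta}_\alpha+C\frac{\e{\beta\cd(x-\ws(e)t\,e)}}{\beta\cd\Lambda e'}\bar{\eta}_\beta$ is an \emph{exact} interior solution of \eqref{eta.eq}, and the constants $C_l,C_u$ are chosen (via $\max$ and $\min$ of $\bar{\eta}_\alpha/\bar{\eta}_\beta$) so that $H_l\le 0\le H_u$ on $\partial\h{t}$. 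The maximum principle then gives $H_l\le\eta_{\alpha,T}\le H_u$ with no sub-solution verification needed beyond this boundary sign. Two further points are missing from your sketch: the bounds $H_l,H_u$ only yield \eqref{eta.bounds} for $x\cd e'\ge L$, and the region $x\cd e'\in[0,L]$ requires a separate Hopf-type boundary gradient estimate (the paper factors out $\e{\alpha\cd(x-\ws(e)t\,e)}$ and uses parabolic regularity on the quotient $N_{\alpha,T}$); and your time orientation is reversed — \eqref{eta.eq} is a backward parabolic equation, so the approximations carry \emph{terminal} data at $t=T\to+\infty$ and are solved for $t<T$, and no factor $\e{\lambda(\alpha)(-n)}$ should appear since the eigenvalue term already absorbs the growth.
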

%------------------------------------------------------------------------
%
\noindent Note that because $\Lambda$ is positive definite, $e'\cd\Lambda e'$ is positive, and the comparability of $\alpha\cd e', \alpha\cd\Lambda e'$ and $|\alpha|$ is needed to ensure that the bounds in \eqref{eta.bounds} are uniform. If $\alpha$ satisfies merely $\alpha\cd\Lambda e'>0$, $\eta_\alpha$ would still exist but would instead be proportional to $\e{\alpha\cd x}\frac{1-\e{-2\frac{\alpha\cd\Lambda e'}{e'\cd\Lambda e'}x\cd e'}}{\alpha\cd\Lambda e'}$.\\

\noindent\textbf{Bounding the exponential moments.} Recall from \eqref{i.alpha} that
\begin{equation*} %I_{\alpha}(t)
I_{\alpha}(t)=\ih{t}\nu(x)\eta_{\alpha}(t,x)p(t,x)\,dx =\ih{t}\eta_\alpha(t,x)\zeta(t,x)q(t,x)\,dx,
\end{equation*}
and with the above choice of $\eta_\alpha(t,x)$, $\frac{dI_\alpha(t)}{dt}=\lambda(\alpha)I_\alpha(t)$. Define the second exponential moment as
\begin{equation*} %V_{\alpha}(t)
V_{\alpha}(t)=\ih{t}\nu(x)\eta_{2\alpha}(t,x)\zeta(t,x)q^2(t,x)\,dx.
\end{equation*}
Then
\begin{equation*} % dV/dt
\begin{array}{lll}
\ds\frac{dV_\alpha(t)}{dt} &=& \ds\ih{t}\zeta q^2[-\nabla\cd(\nu\nabla\eta_{2\alpha})-F\cd\nabla\eta_{2\alpha}+\lambda(2\alpha)\nu\eta_{2\alpha}]\,dx +\ds\ih{t}\eta_{2\alpha}q^2[\nabla\cd(\nu\nabla\zeta)-F\cd\nabla\zeta]\,dx \\
&& +\ds\ih{t}2\eta_{2\alpha}\zeta q[\nabla\cd(\nu\nabla q)+2\nu\zeta^{-1}\nabla\zeta\cd\nabla q-F\cd\nabla q]\,dx \\
&=& \ds\ih{t}\nu q^2\nabla\zeta\cd\nabla\eta_{2\alpha} +2\nu\zeta q\nabla\eta_{2\alpha}\cd\nabla q +\eta_{2\alpha}q^2F\cd\nabla\zeta +2\eta_{2\alpha}\zeta qF\cd\nabla q +\lambda(2\alpha)\nu\eta_{2\alpha}\zeta q^2\,dx \\
&& +\ds\ih{t}-\nu q^2\nabla\zeta\cd\nabla\eta_{2\alpha} -2\nu\eta_{2\alpha}\nabla\zeta\cd\nabla q -\eta_{2\alpha}q^2F\cd\nabla\zeta \,dx \\
&& +\ds\ih{t}-2\nu\zeta q\nabla\eta_{2\alpha}\cd\nabla q -2\nu\eta_{2\alpha}q\nabla\zeta\cd\nabla q -2\nu\eta_{2\alpha}\zeta|\nabla q|^2 +4\nu\eta_{2\alpha}q\nabla\zeta\cd\nabla q -2\eta_{2\alpha}\zeta qF\cd\nabla q\,dx\\
&=& \lambda(2\alpha)V_\alpha(t)-2\ds\ih{t}\nu\eta_{2\alpha}\zeta|\nabla q|^2\,dx.
\end{array}
\end{equation*}
If we let
\begin{equation*} % D_\alpha
D_\alpha(t)=\ih{t}\nu(x)\eta_{2\alpha}(t,x)\zeta(t,x)|\nabla q(t,x)|^2\,dx,
\end{equation*}
we end up with the following ODEs:
\begin{equation} \label{ivd} %ODEs for I and V
\frac{dI_\alpha(t)}{dt}=\lambda(\alpha)I_\alpha(t) \quad \text{and} \quad \frac{dV_\alpha(t)}{dt}=\lambda(2\alpha)V_\alpha(t)-2D_\alpha(t).
\end{equation}
\textbf{Connecting $D_\alpha$ to $I_\alpha$ and $V_\alpha$.} Next we prove that there exists a constant $C$ depending only on the dimension, such that the following Nash type inequality holds for all $\alpha$ satisfying the conditions in \eqref{alpha.cond} and $t>0$:
\begin{equation}\label{Nash.n+2} %Nash type inequality for I,V,D.
D_\alpha(t)\ge V_\alpha(t)^{\frac{n+4}{n+2}}I_\alpha(t)^{-\frac{4}{n+2}}.
\end{equation}
Using the linear and exponential bounds in the lemmas \ref{lemma.zeta} and \ref{lemma.eta}, 
\begin{equation*} %Inequality for I
I_{\alpha}(t)=\ih{t}\nu(x)\eta_\alpha(t,x)\zeta(t,x)q(t,x)\,dx \ge C_I\ih{0}\e{\alpha\cd x}\frac{1-\e{-2\alpha\cd e'x\cd e'}}{\alpha\cd e'}x\cd e'q(t,x+\ws(e)t\,e)\,dx,
\end{equation*}
\begin{equation*} %Inequality for V
V_{\alpha}(t)=\ih{t}\nu(x)\eta_{2\alpha}(t,x)\zeta(t,x)q^2(t,x)\,dx \le C_V\ih{0}\e{2\alpha\cd x}\frac{1-\e{-4\alpha\cd e'x\cd e'}}{2\alpha\cd e'}x\cd e'q^2(t,x+\ws(e)t\,e)\,dx,
\end{equation*}
\begin{equation*} %Inequality for D
D_\alpha(t)=\ih{t}\nu(x)\eta_{2\alpha}(t,x)\zeta(t,x)|\nabla q(t,x)|^2\,dx \ge C_D\ih{0}\e{2\alpha\cd x}\frac{1-\e{-4\alpha\cd e'x\cd e'}}{2\alpha\cd e'}x\cd e'|\nabla q(t,x+\ws(e)t\,e)|^2\,dx.
\end{equation*}
In order to compare $I_\alpha, V_\alpha$ and $D_\alpha$, it is enough to compare the quantities in the right, with $q(t,x+\ws(e)t\,e)$ considered as a function of $x$ only. Given any any bounded, $C^1$ function $f:\h{t}\to\R$ that vanishes in the boundary and decays at infinity, we can define the analogous quantities
\begin{equation*} % \bar{I}, \bar{V}
\hat{I}_\alpha=\ih{0}\e{\alpha\cd x}\frac{1-\e{-2\alpha\cd e'x\cd e'}}{\alpha\cd e'} x\cd e'f(x)\,dx, \quad
\hat{V}_\alpha=\ih{0}\e{2\alpha\cd x}\frac{1-\e{-4\alpha\cd e'x\cd e'}}{2\alpha\cd e'} x\cd e'f^2(x)\,dx,
\end{equation*}
\begin{equation*} % \bar{D}
\hat{D}_\alpha=\ih{0}\e{2\alpha\cd x}\frac{1-\e{-4\alpha\cd e'x\cd e'}}{2\alpha\cd e'}x\cd e'|\nabla f(x)|^2\,dx. %(|\nabla f(x)|^2-\alpha^2f^2(x))\,dx.
\end{equation*}

%-----------------------------------------------------------------
\begin{lemma} % copmaring \hat{I,V,D}
There exists $C>0$ depending only on $n$, such that for all $0<\alpha\le\alpha_0$,
\begin{equation*} % Nash on \hat{I,V,D}
\hat{D}_\alpha\ge C\hat{V}_\alpha^{\frac{n+4}{n+2}}\hat{I}_\alpha^{-\frac{4}{n+2}}.
\end{equation*}
\end{lemma}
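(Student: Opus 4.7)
The strategy is to reduce the weighted inequality to the classical half-space Nash inequality \eqref{nash.eq} by a substitution that absorbs the $\alpha$-dependent weight. After rotating coordinates so that $e' = e_1$ and writing $a := \alpha\cdot e'$, $\alpha^\perp := \alpha - ae_1$, the plan is to set
\begin{equation*}
g(x) := e^{\alpha\cdot x}\sqrt{\phi_V(x_1)}\,f(x), \qquad \phi_V(x_1)=\frac{(1-e^{-4ax_1})x_1}{2a}.
\end{equation*}
Then $g$ vanishes on $\partial\h{0} = \{x_1 = 0\}$ (since $\phi_V(0) = 0$), and by construction $\int_{\h{0}} g^2\,dx = \hat V_\alpha$ identically.

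The first key step is to show $\int|\nabla g|^2 \le \hat D_\alpha$. Writing $\rho(x) := e^{\alpha\cdot x}\sqrt{\phi_V(x_1)}$ so that $g = \rho f$, a standard integration by parts (the boundary term vanishes because $\rho = 0$ on $\partial\h{0}$) gives
\begin{equation*}
\int|\nabla g|^2\,dx = \hat D_\alpha - \int_{\h{0}}\rho\,\Delta\rho\,f^2\,dx,
\end{equation*}
so the claim reduces to the subharmonicity of $\rho$. Factoring $\rho(x) = e^{\alpha^\perp\cdot x'}u(x_1)$ with $u(x_1) := e^{ax_1}\sqrt{\phi_V(x_1)}$ gives $\Delta\rho = e^{\alpha^\perp\cdot x'}(u''(x_1) + |\alpha^\perp|^2 u(x_1))$, and since $u\ge 0$ it suffices to show that $u$ is convex. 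Using the explicit identity $u(x_1)^2 = x_1\sinh(2ax_1)/a$, the convexity condition $2u^2(u^2)'' \ge ((u^2)')^2$ reduces, after substituting $y = 2ax_1$, to the elementary hyperbolic estimate
\begin{equation*}
2y\sinh(y)\cosh(y) + y^2\sinh^2(y) \ge y^2 + \sinh^2(y), \qquad y \ge 0,
\end{equation*}
which holds because both sides vanish at $y=0$ and the derivative of the difference equals $6y\sinh^2(y) + 2y^2\sinh(y)\cosh(y) \ge 0$.

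With $\int|\nabla g|^2 \le \hat D_\alpha$ in hand, applying the Nash inequality \eqref{nash.eq} to $g$ yields
\begin{equation*}
\hat V_\alpha \le C\,\hat D_\alpha^{(n+2)/(n+4)}\Bigl(\int_{\h{0}} x_1\,g\,dx\Bigr)^{4/(n+4)}.
\end{equation*}
The final and most delicate step is to dominate $\int x_1 g = \int x_1 e^{\alpha\cdot x}\sqrt{\phi_V}\,f$ by $C\hat I_\alpha$. Although $\phi_I/\phi_V = 2/(1+e^{-2ax_1}) \in [1,2]$ makes the weights themselves comparable, the extra factor $x_1\sqrt{\phi_V}/\phi_I$ grows like $\sqrt{ax_1}$ in the far-field $ax_1 \gg 1$, so a pointwise comparison fails. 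This is the main technical obstacle. I expect to resolve it by splitting the integration region at $x_1 \sim 1/a$: in the near-field $ax_1 \lesssim 1$ the ratio is bounded by a universal constant, while in the far-field the smallness of $|\alpha| \le a_0$ from \eqref{alpha.cond} together with a Cauchy--Schwarz argument interpolating $\int x_1 g$ between $\hat I_\alpha$ and $\hat V_\alpha^{1/2}$ (with the surplus absorbed into $\hat D_\alpha$) should close the estimate. Rearranging then gives $\hat D_\alpha \ge C\hat V_\alpha^{(n+4)/(n+2)}\hat I_\alpha^{-4/(n+2)}$.
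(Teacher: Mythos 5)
Your first two steps are correct and check out in detail: with $\rho(x)=e^{\alpha\cd x}\sqrt{\phi_V(x_1)}$ one indeed has $\int|\nabla(\rho f)|^2=\hat D_\alpha-\int \rho\,\Delta\rho\,f^2$, the identity $\rho^2=e^{2\alpha^\perp\cd x'}x_1\sinh(2ax_1)/a$ is right, and your hyperbolic inequality (whose derivative is $6y\sinh^2 y+2y^2\sinh y\cosh y$) does establish convexity of $u$ and hence subharmonicity of $\rho$. This is a genuinely different route from the paper: the paper does not substitute into the half-space Nash inequality \eqref{nash.eq} at all, but instead lifts $f$ to $\mathbb{R}^{n+2}$ by treating $x\cd e'$ as the radial variable of three new coordinates, sets $G(z)=e^{\tilde\alpha\cd z}F(z)$, and checks that $\int G=2\pi\hat I_\alpha$, $\int G^2=2\pi\hat V_\alpha$, $\int|\nabla G|^2\le 2\pi\hat D_\alpha$, after which the classical whole-space Nash inequality in dimension $n+2$ gives the result immediately. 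The point of that construction is precisely that the $L^1$-type quantity carries the exponential weight $e^{\tilde\alpha\cd z}$ only once, so its spherical average produces exactly the weight $e^{\alpha\cd x}\,\frac{1-e^{-2\alpha\cd e'\,x\cd e'}}{\alpha\cd e'}\,x\cd e'$ of $\hat I_\alpha$, with no mismatch between near- and far-field.

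Your third step is a genuine gap, and I do not believe it can be closed in the form you propose. You need $\int x_1 g\le C\hat I_\alpha$ with $C$ independent of $\alpha$, but the ratio of the weights, $x_1\sqrt{\phi_V(x_1)}/\phi_I(x_1)\sim\sqrt{ax_1/2}$ for $ax_1\gg1$, is unbounded, so for $f$ concentrated at $x_1\sim R/a$ with $R$ large the two sides differ by a factor $\sqrt R$; this is not a technical nuisance but a failure of the inequality itself. The proposed repair (split at $x_1\sim 1/a$, Cauchy--Schwarz in the far field, absorb into $\hat D_\alpha$) is not carried out, and the obvious attempts do not work: Cauchy--Schwarz against $\hat V_\alpha^{1/2}$ leaves a divergent second moment $\int x_1^2$ over an unbounded region, and any interpolation $\int x_1g\lesssim \hat I_\alpha^{\theta}(\cdots)^{1-\theta}$ with $\theta<1$ degrades the exponents $\tfrac{n+4}{n+2}$ and $-\tfrac{4}{n+2}$, which must be preserved exactly for the subsequent ODE argument yielding the $t^{-(n+2)/2}$ decay of $V_\alpha$. (It is instructive that at $\alpha=0$ your substitution closes perfectly, since then $x_1\sqrt{\phi_V}=\phi_I/\sqrt2$; the obstruction is exactly the transition of the hyperbolic weight from quadratic to linear growth across $x_1\sim 1/a$, which the paper's radial lift handles automatically.) To repair your argument you would need either a weighted half-space Nash inequality whose $L^1$ moment is $\int\phi_I(x_1)e^{\alpha\cd x}f$ rather than $\int x_1 g$, or simply the paper's lift to $\mathbb{R}^{n+2}$.
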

%-----------------------------------------------------------------
%
\begin{proof}
Assume first that $e'=e_1$. We will lift the function to $\mathbb{R}^{n+2}$ by considering $x_1\ge 0$ as a radial variable for the first $3$ new coordinates, and by taking $\tilde{\alpha}=(\alpha_1,0,0,\alpha_2,\ldots,\alpha_n)$. So for $z\in\mathbb{R}^{n+2}$, define 
\begin{equation*} % lifting in \mathbb{R}^{n+2}
F(z)=f(|(z_1,z_2,z_3)|,z_4,\ldots,z_{n+2}),
\end{equation*}
and let $G(z)$ be
\begin{equation*} % adding an exponential factor to the lifted equation
G(z)=\e{\tilde{\alpha}\cd z}F(z).
\end{equation*}
Using spherical coordinates for $(z_1,z_2,z_3)$ and calling $\bar{z}=(z_4,\ldots,z_{n+2})$ and $\bar{\alpha}=(\tilde{\alpha}_4,\ldots,\tilde{\alpha}_{n+2})=(\alpha_2,\ldots,\alpha_n)$ we note that
\begin{equation*} %\hat{I} in \mathbb{R}^{n+2}
\begin{array}{ll}
\ds\int\limits_{\mathbb{R}^{n+2}}G(z)\,dz &= \ds\int\limits_\R\cdots\ds\int\limits_\R\ds\int\limits_0^\infty\ds\int\limits_0^{2\pi}\int\limits_0^\pi \e{\alpha_1rcos\phi}\e{\bar{\alpha}\cd\bar{z}}f(r,z_4,\ldots,z_{n+2})r^2\sin\phi\,d\phi\,d\theta\,dr\,dz_2\ldots dz_{n+2}\\
&=2\pi\ds\int\limits_{\mathbb{R}^{n-1}}\ds\int\limits_0^\infty \ds\frac{\e{\alpha_1r}-\e{-\alpha_1r}}{\alpha_1}\e{\bar{\alpha}\cd\bar{z}} f(r,z_4,\ldots,z_{n+2})r\,dr\,d\bar{z}=2\pi\hat{I}_\alpha.
\end{array}
\end{equation*}
Similarly
\begin{equation*} %\hat{V} in R^{n+2}
\ds\int\limits_{\mathbb{R}^{n+2}}G^2(z)\,dz=2\pi\hat{V}_\alpha.
\end{equation*}
On the other hand, $|\nabla G(z)|^2=\e{2\tilde{\alpha}\cd z}\left[|\nabla f|^2+2\alpha_1\frac{z_1}{|(z_1,z_2,z_3)|}f\partial_1f+2\bar{\alpha}\cd\bar{\nabla}f+|\alpha|^2f^2\right]$, and
\begin{equation*} % middle = -2*right
\begin{array}{l}
\ds\int\limits_{\mathbb{R}^{n+2}} \e{2\tilde{\alpha}\cd z} \left(2\alpha_1\frac{z_1}{|(z_1,z_2,z_3)|}f\partial_1f+2\bar{\alpha}\cd\bar{\nabla}f\right)\,dz = \ds\int\limits_{\mathbb{R}^{n-1}}\int\limits_0^\infty\ds\int\limits_0^{2\pi}\int\limits_0^\pi \e{2\alpha_1rcos\phi}\e{2\bar{\alpha}\cd\bar{z}}(\alpha_1\cos\phi(f^2)_r+\bar{\alpha}\cd\bar{\nabla}(f^2))r^2\sin\phi\,d\phi\,d\theta\,dr\,d\bar{z}\\
=-\ds\int\limits_{\mathbb{R}^{n-1}}\ds\int\limits_0^\infty\ds\int\limits_0^{2\pi}\int\limits_0^\pi 2\e{2\alpha_1rcos\phi}\e{2\bar{\alpha}\cd\bar{z}} (\alpha_1^2f^2r^2\cos^2\phi +\alpha_1f^2r\cos\phi +|\bar{\alpha}|^2f^2r^2) \sin\phi\,d\phi\,d\theta\,dr\,d\bar{z}\\
=-\ds\int\limits_{\mathbb{R}^{n-1}}\ds\int\limits_0^\infty\ds\int\limits_0^{2\pi} 2\alpha_1^2\frac{\e{2\alpha_1r}-\e{-2\alpha_1r}}{2\alpha_1r} \e{2\bar{\alpha}\cd\bar{z}}f^2r^2\,d\theta\,dr\,d\bar{z} +\ds\int\limits_{\mathbb{R}^{n-1}}\ds\int\limits_0^\infty\ds\int\limits_0^{2\pi}\int\limits_0^\pi 2\alpha_1^2\frac{1}{\alpha_1r}\e{2\alpha_1rcos\phi}\e{2\bar{\alpha}\cd\bar{z}}f^2r^2\cos\phi\sin\phi\,d\phi\,d\theta\,dr\,d\bar{z}\\
\hspace*{10pt}-\ds\int\limits_{\mathbb{R}^{n-1}}\ds\int\limits_0^\infty\ds\int\limits_0^{2\pi}\int\limits_0^\pi 2\alpha_1\e{2\alpha_1rcos\phi}\e{2\bar{\alpha}\cd\bar{z}}f^2r\cos\phi\sin\phi\,d\phi\,d\theta\,dr\,d\bar{z} -\ds\int\limits_{\mathbb{R}^{n-1}}\ds\int\limits_0^\infty\ds\int\limits_0^{2\pi} 2|\bar{\alpha}|^2\frac{\e{2\alpha_1r}-\e{-2\alpha_1r}}{2\alpha_1r} \e{2\bar{\alpha}\cd\bar{z}}f^2r^2\,d\theta\,dr\,d\bar{z}\\
\hspace*{10pt}=-\ds 2|\alpha|^2\int\limits_{\mathbb{R}^{n-1}}\ds\int\limits_0^\infty\ds\int\limits_0^{2\pi} \frac{\e{2\alpha_1r}-\e{-2\alpha_1r}}{2\alpha_1r} \e{2\bar{\alpha}\cd\bar{z}}f^2r^2\,d\theta\,dr\,d\bar{z} =-2|\alpha|^2\ds\int\limits_{\mathbb{R}^{n+2}}\e{2\alpha z\cd e_1}f^2\,dz.
\end{array}
\end{equation*}
Therefore,
\begin{equation*} % \hat{D} in R^{n+2}
\ds\int\limits_{\mathbb{R}^{n+2}}|\nabla G(z)|^2\,dz =\int\limits_{\mathbb{R}^{n+2}}\e{2\tilde{\alpha}\cd z}(|\nabla f|^2-|\alpha|^2f^2) \le 2\pi\hat{D}_\alpha.
\end{equation*}
Now the inequality follows directly from Nash inequality (refer to \cite{nash}) on $G$ in $\mathbb{R}^{n+2}$. If $e'\not\equiv e_1$, consider the orthonormal rotation matrix $E$ that satisfies $e'=Ee_1$, and apply the result to $f(Ex)$ on $x\cd e_1= Ex\cd e'>0$ instead.
\end{proof}

Going back to \eqref{Nash.n+2}, we see that the inequality follows from the lemma and the fact that
\begin{equation*} %comparisons
I_\alpha\ge C_I\bar{I}_\alpha, \quad V_\alpha\le C_V\bar{V}_\alpha, \quad I_\alpha\ge C_D\bar{D}_\alpha.
\end{equation*}
From \eqref{ivd}, $I_\alpha(t)=I_\alpha(0)\e{\lambda(\alpha)t}$, whereas $V_\alpha$ satisfies 
\begin{equation*} %inequality for V_\alpha
V_\alpha'=\lambda(2\alpha)V_\alpha-2D_\alpha \le\lambda(2\alpha)V_\alpha-CV_\alpha^{\frac{n+4}{n+2}}I_\alpha^{-\frac{4}{n+2}} \le\lambda(2\alpha)V_\alpha-CI_\alpha(0)^{-\frac{4}{n+2}}\e{-\frac{4}{n+2}\lambda(\alpha)t}V_\alpha^{\frac{n+4}{n+2}}.
\end{equation*}
If we let $V_\alpha(t)=\e{\lambda(2\alpha)t}Z_\alpha(t)$, and $\sigma(\alpha)=\frac{2}{n+2}\lambda(2\alpha)-\frac{4}{n+2}\lambda(\alpha)$, then
\begin{equation*} %equation for Z_\alpha
Z_\alpha'\le-CI_\alpha(0)^{-\frac{4}{n+2}}\e{(\frac{2}{n+2}\lambda(2\alpha)-\frac{4}{n+2}\lambda(\alpha))t} =-CI_\alpha(0)^{-\frac{4}{n+2}}\e{\sigma(\alpha)t},
\end{equation*}
so
\begin{equation*}
Z_\alpha(t)\le CI_\alpha(0)^2\left(\frac{\sigma(\alpha)}{\e{\sigma(\alpha)t}-\e{\sigma(\alpha)}}\right)^{\frac{n+2}{2}} \le CI_\alpha(0)^2\frac{1}{(t-1)^{\frac{n+2}{2}}}\e{-\frac{n+2}{2}\sigma(\alpha)},
\end{equation*}
because for small $|\alpha|$, $\sigma(\alpha)=\frac{2\alpha\cd\Lambda\alpha}{n+2}+O(|\alpha|^3)>0$ as a consequence of \eqref{lambda}, and for $\sigma>0, t>1$, $\e{\sigma t}-\e{\sigma}\ge\sigma\e{\sigma}(t-1)$. Furthermore the positivity of $\sigma(\alpha)$ implies that 
\begin{equation*}
V_\alpha(t)\le C\frac{I_\alpha(0)^2}{(t-1)^{\frac{n+2}{2}}}\e{\lambda(2\alpha)t},
\end{equation*}
i.e.
\begin{equation*}
\left(\ih{t}\nu(x)\eta_{2\alpha}(t,x)\zeta(t,x)q^2(t,x)\,dx\right)^{1/2} \le C\frac{\e{\frac{\lambda(2\alpha)}{2}t}}{(t-1)^{\frac{n+2}{4}}}\ih{0}\nu(x)\eta_\alpha(0,x)\zeta(0,x)q_0(x)\,dx.
\end{equation*}
Invoking once more the bounds of lemmas \ref{lemma.zeta} and \ref{lemma.eta}, we get that
\begin{equation} \label{int.bound}
\left(\ih{0}\e{2\alpha\cd x}\frac{1-\e{-4\alpha\cd e'x\cd e'}}{2\alpha\cd e'x\cd e'}p^2(t,x+\ws(e)t\,e)\,dx\right)^{1/2} \le C\frac{\e{\frac{\lambda(2\alpha)}{2}t}}{(t-1)^{\frac{n+2}{4}}}\ih{0} \e{\alpha\cd x}\frac{1-\e{-2\alpha\cd e'x\cd e'}}{\alpha\cd e'}p_0(x)\,dx.
\end{equation}

\subsubsection{Proof of lemma \ref{lemma.eta}}

\textbf{Eigenvalue asymptotics.} We expect %$\eta_\alpha(t,x)\sim\frac{\e{\alpha\cd(x-\ws(e)t\,e)}}{\alpha\cd e'}\bar{\eta}_\alpha(x)$ 
$\eta_\alpha(t,x)\sim\e{\alpha\cd(x-\ws(e)t\,e)}\bar{\eta}_\alpha(x)$ for $x$ away from the boundary of $\h{t}$ so with this in mind, we start by the ansatz that %$\frac{\e{\alpha\cd(x-\ws(e)t\,e)}}{\alpha\cd e'}\bar{\eta}_\alpha(x)$
$\e{\alpha\cd(x-\ws(e)t\,e)}\bar{\eta}_\alpha(x)$ solves \eqref{eta.eq} for some $\lambda(\alpha)$. This requires $\bar{\eta}_\alpha$ to solve
\begin{equation*} % equation for \bar{\eta}_\alpha
\begin{array}{ll}
\ds\frac{1}{\nu(x)}\nabla\cd(\nu(x)\nabla\bar{\eta}_\alpha)+\ds\left(\frac{F(x)}{\nu(x)}+\alpha\right)\cd\nabla\bar{\eta}_\alpha + \ds\frac{\alpha}{\nu(x)}\cd\nabla(\nu(x)\bar{\eta}_\alpha) +\ds\alpha\cd\left(\frac{F(x)}{\nu(x)}-\ws(e)e\right)\bar{\eta}_\alpha
=\bar{\lambda}(\alpha)\bar{\eta}_\alpha.
\end{array}
\end{equation*}
where $\bar{\lambda}(\alpha)=\lambda(\alpha)-|\alpha|^2$. Thus we consider $\bar{\eta}_\alpha$ to be the unique positive, periodic eigenfunction of this equation, normalized so that
\begin{equation} \label{normal.eta} %normalization of \eta
\int\limits_{\tn}\nu(x)\bar{\eta}_\alpha(x)\,dx=1.
\end{equation}
First at $\alpha=0$, $\bar{\lambda}(0)=0$ and $\bar{\eta}_0(x)=1$ are respectively the principal eigenvalue and eigenfunction of the operator. The operator depends analytically on the parameter $\alpha$, so we can differentiate the equation in $\alpha_i$'s to find the quadratic approximation of $\bar{\lambda}$ for small $\alpha$. If $\bar{\eta}_i:=\frac{\partial\bar{\eta}_\alpha}{\partial\alpha_i}$ and $\bar{\lambda}_i(\alpha):=\frac{\partial\bar{\lambda}}{\partial\alpha_i}$, then 
\begin{equation*} %first derivative
\begin{array}{ll}
\ds\frac{1}{\nu(x)}\nabla\cd(\nu(x)\nabla\bar{\eta}_i) + \ds\left(\frac{F(x)}{\nu(x)}+\alpha\right)\cd\nabla\bar{\eta}_i + \ds\frac{\alpha}{\nu(x)}\cd\nabla(\nu(x)\bar{\eta}_i) + \alpha\cd\left(\frac{F(x)}{\nu(x)}-\ws(e)e\right)\bar{\eta}_i\\
+e_i\cd\nabla\bar{\eta}_\alpha + \ds\frac{e_i}{\nu(x)}\cd\nabla(\nu(x)\bar{\eta}_\alpha) + \ds e_i\cd\left(\frac{F(x)}{\nu(x)}-\ws(e)e\right)\bar{\eta}_\alpha = \bar{\lambda}_i(\alpha)\bar{\eta}_\alpha +\bar{\lambda}(\alpha)\bar{\eta}_i.
\end{array}
\end{equation*}
At $\alpha=0$, using that $\bar{\lambda}(0)=0$ and $\bar{\eta}_0=1$, we get
\begin{equation} \label{first.der}% first derivative at 0
\frac{1}{\nu(x)}\nabla\cd(\nu(x)\nabla\bar{\eta}_i) +\frac{F(x)}{\nu(x)}\cd \nabla\bar{\eta}_i +\frac{e_i}{\nu(x)}\cd\nabla\nu(x) +e_i\cd\left(\frac{F(x)}{\nu(x)}-\ws(e)e'\right) =\bar{\lambda}'(0).
\end{equation}
Integrating this with respect to $\nu(x)$ on the torus, and recalling \eqref{normalized} and \eqref{drift}, results in $\bar{\lambda}'(0)=0$. Differentiating once more in $\alpha_j$, and calling $\bar{\eta}_{ij}:=\frac{\partial^2\bar{\eta}_{\alpha}}{\partial\alpha_{ij}}$, $\bar{\lambda}_{ij}:=\frac{\partial^2\bar{\lambda}(\alpha)}{\partial\alpha_{ij}}$,
\begin{equation*} % second derivative
\begin{array}{ll}
\ds\frac{1}{\nu(x)}\nabla(\nu(x)\nabla\bar{\eta}_{ij}) +\ds\left(\frac{F(x)}{\nu(x)}+\alpha\right)\cd\nabla\bar{\eta}_{ij}  +\ds\frac{\alpha}{\nu(x)}\cd\nabla(\nu(x)\bar{\eta}_{ij}) +\ds\alpha\cd\left(\frac{F(x)}{\nu(x)}-\ws(e)e\right)\bar{\eta}_{ij}\\
+e_i\cd\nabla\eta_j+e_j\cd\nabla\eta_i 
+\ds\frac{e_i}{\nu(x)}\cd\nabla(\nu(x)\bar{\eta}_j) +\ds\frac{e_j}{\nu(x)}\cd\nabla(\nu(x)\bar{\eta}_i) +\ds e_i\cd\left(\frac{F(x)}{\nu(x)}-\ws(e)e\right)\bar{\eta}_j\\
+\ds e_j\cd\left(\frac{F(x)}{\nu(x)}-\ws(e)e\right)\bar{\eta}_i
=\bar{\lambda}_{ij}(\alpha)\bar{\eta}_\alpha+\bar{\lambda}_i(\alpha)\bar{\eta}_j+\bar{\lambda}_j(\alpha)\bar{\eta}_i+\bar{\lambda}(\alpha)\bar{\eta}_{ij}.
\end{array}
\end{equation*}
At $\alpha=0$, 
\begin{equation*} % second derivative at 0
\begin{array}{ll}
\ds\frac{1}{\nu(x)}\nabla\cd(\nu(x)\nabla\bar{\eta}_{ij}) +\ds\frac{F(x)}{\nu(x)}\cd \nabla\bar{\eta}_{ij} +e_i\cd\nabla\eta_j+e_j\cd\nabla\eta_i  +\ds\frac{e_i}{\nu(x)}\cd\nabla(\nu(x)\bar{\eta}_j) +\ds\frac{e_j}{\nu(x)}\cd\nabla(\nu(x)\bar{\eta}_i)\\
+\ds e_i\cd\left(\frac{F(x)}{\nu(x)}-\ws(e)e\right)\bar{\eta}_j
+\ds e_j\cd\left(\frac{F(x)}{\nu(x)}-\ws(e)e\right)\bar{\eta}_i
=\bar{\lambda}_{ij}.
\end{array}
\end{equation*}
Integrating this with respect to $\nu(x)$ gives
\begin{equation*} % \lambda_{ij} as an integral
\bar{\lambda}_{ij}(0)=\int\limits_{\tn}\nu(x)(e_i\cd\nabla\bar{\eta}_j+e_j\cd\nabla\bar{\eta}_i)+F(x)\cd e_i\bar{\eta}_j+F(x)\cd e_j\bar{\eta}_i\,dx.
\end{equation*}
Here we used the normalization \eqref{normal.eta} to get that $\int\nu(x)\eta_i\,dx=0$ for all $\alpha$ and $i$. Lastly, integrating \eqref{first.der} with respect to $\nu(x)\bar{\eta}_j$ gives
\begin{equation*} %\int F(x)\cd e_i\eta_j
\int\limits_{\tn}F(x)\cd e_i\bar{\eta}_j\,dx =\int\limits_{\tn}\nu(x)\nabla\bar{\eta}_i\cd\nabla\bar{\eta}_j -F(x)\cd\nabla\bar{\eta}_i\bar{\eta}_j +\nu(x)e_i\cd\nabla\bar{\eta}_j\,dx.
\end{equation*}
Plugging this into the expression for $\bar{\lambda}_{ij}(0)$, and recalling that $\bar{\lambda}(\alpha)=\lambda(\alpha)-|\alpha|^2$
\begin{equation*} %\lambda_{ij}
\Lambda_{ij}:=\lambda_{ij}(0) =2\int\limits_{\tn}\nu(x)(\nabla\bar{\eta}_i\cd\nabla\bar{\eta}_j+e_i\cd\nabla\bar{\eta}_j+e_j\cd\nabla\bar{\eta}_i)\,dx+2e_ie_j =2\int\limits_{\tn}\nu(x)(\nabla\bar{\eta}_i+e_i)\cd(\nabla\bar{\eta}_j+e_j)\,dx.
\end{equation*}
Note that $\Lambda$ is a positive-definite matrix, as $\nabla\bar{\eta}_i+e_i\not\equiv 0$ due to the periodicity of $\bar{\eta}_\alpha$. This proves \eqref{lambda}.\\

\noindent\textbf{Construction of $\eta_\alpha(t,x)$.} 
We want the weight $\eta_\alpha$ to be $0$ on the boundary as well as be asymptotically exponential away from it, therefore it is natural to look for a function comparable to $\frac{\e{\alpha\cd(x-\ws(e)t\,e)}-\e{(\alpha-2\alpha\cd e' e')\cd(x-\ws(e)t\,e)}}{\alpha\cd e'}$. The discussion above suggests that the first term should arise from $\frac{\e{\alpha\cd(x-\ws(e)t\,e)}}{\alpha\cd e'}\bar{\eta}_\alpha(x)$. Analogously the second term should be approximated by the same expression, evaluated at $\alpha-2\alpha\cd e'e'$. 

An important factor in how we choose these two sides is that their respective eigenvalues coincide. This means that $\lambda(\alpha)$ and $\lambda(\alpha-2\alpha\cd e'e')$ should coincide in order for the first the combined function to solve \eqref{eta.eq}. However note that their difference
\begin{equation*}
\lambda(\alpha)-\lambda(\alpha-2\alpha\cd e'e') =\alpha\cd\Lambda\alpha-(\alpha-2\alpha\cd e'e')\cd\Lambda(\alpha-2\alpha\cd e'e')+O(|\alpha|^3) =4\alpha\cd e'\alpha\cd(\Lambda e'-e'\cd\Lambda\cd e'e')+O(|\alpha|^3)
\end{equation*}
is of order $|\alpha|^2$ unless $\Lambda e'-e'\cd\Lambda\cd e'e'=0$, which means that to equate the eigenvalues, we would need to change $\alpha-2\alpha\cd e'e'$ by a large factor of order $O(|\alpha|)$. So instead we consider $\beta=\alpha-2\frac{\alpha\cd\Lambda e'}{e'\cd\Lambda e'}e'$ for the second term. Indeed, 
\begin{equation*}
\lambda(\alpha)-\lambda(\beta)  =\alpha\cd\Lambda\alpha-\left(\alpha-2\frac{\alpha\cd\Lambda e'}{e'\cd\Lambda e'}e'\right)\cd\Lambda\left(\alpha-2\frac{\alpha\cd\Lambda e'}{e'\cd\Lambda e'}e'\right)+O(|\alpha|^3) =O(|\alpha|^3)
\end{equation*}
The following lemma establishes the existence of $\beta$ as well as it's relation between between $\lambda$ and $\eta$ corresponding to $\alpha$ and $\beta$. 
%
%-----------------------------------------------------------------------
\begin{lemma} \label{beta}
There exist %$C>0$ and 
$a_0>0$ such that for all $|\alpha|\le a_0$ satisfying the conditions in \eqref{alpha.cond} there exists $\beta(\alpha)$ satisfying 
\begin{equation*} % properties of \beta
\beta=\alpha-2\frac{\alpha\cd\Lambda e'}{e'\cd\Lambda e'}e'+O(|\alpha|^2)e',  \quad \quad \lambda(\beta)=\lambda(\alpha),
\end{equation*}
and
\begin{equation*} % \bar{\eta}_\alpha-1
\sup\limits_{x\in\tn}|\bar{\eta}_\alpha(x)-1|\le C|\alpha|, \quad\quad \sup\limits_{x\in\tn}|\bar{\eta}_{\beta}(x)-1|\le C|\alpha|.
\end{equation*}
\end{lemma}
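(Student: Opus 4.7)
The plan is to seek $\beta$ on the one-dimensional affine ray $\alpha - s e'$ with $s \in \R$, so that the correction $\beta - \alpha$ is automatically a multiple of $e'$, as required by the claim. Substituting into the quadratic expansion \eqref{lambda} gives
\begin{equation*}
\lambda(\alpha - s e') - \lambda(\alpha) = -2s\,(\alpha\cd\Lambda e') + s^2(e'\cd\Lambda e') + R(\alpha,s),
\end{equation*}
with $R(\alpha,s) = O((|\alpha|+|s|)^3)$, by the analytic dependence of the principal eigenvalue $\lambda$ on the parameter $\alpha$ near $\alpha = 0$, where $\lambda(0) = 0$ is simple. Dividing out the trivial root $s = 0$ reduces the condition $\lambda(\beta) = \lambda(\alpha)$ to the scalar problem
\begin{equation*}
H(\alpha,s) := s\,(e'\cd\Lambda e') - 2(\alpha\cd\Lambda e') + s^{-1}R(\alpha,s) = 0,
\end{equation*}
in which $s^{-1}R$ extends smoothly to $s = 0$ since $R$ vanishes to second order there.

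Next I would apply the implicit function theorem near the approximate root $s_0(\alpha) := 2(\alpha\cd\Lambda e')/(e'\cd\Lambda e')$. Because $\Lambda$ is positive-definite we have $e'\cd\Lambda e' > 0$, so $\partial_s H(\alpha, s_0(\alpha)) = e'\cd\Lambda e' + O(|\alpha|)$ stays bounded away from zero for $|\alpha|$ small, yielding a unique smooth branch $s(\alpha) = s_0(\alpha) + O(|\alpha|^2)$. Setting $\beta := \alpha - s(\alpha)\,e'$ produces both the explicit expansion claimed and the identity $\lambda(\beta) = \lambda(\alpha)$. The hypotheses \eqref{alpha.cond}, in particular $2^{-1}\alpha\cd e' \le s_0(\alpha)/2 \le 2\alpha\cd e'$, guarantee $|s(\alpha)| \le C|\alpha|$ and hence $|\beta| \le C|\alpha|$ uniformly over the admissible conic set.

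For the eigenfunction bounds I would invoke Kato--Rellich perturbation theory: the principal eigenpair $(\bar\lambda(\alpha), \bar\eta_\alpha)$ of the $\alpha$-dependent periodic operator, normalized by \eqref{normal.eta}, is analytic in $\alpha$ in a neighborhood of $0$, and $\bar\eta_0 \equiv 1$. Differentiability at $\alpha = 0$ therefore yields
\begin{equation*}
\sup_{x \in \tn} |\bar\eta_\alpha(x) - 1| \le C|\alpha|,
\end{equation*}
and applying the same estimate at $\beta$, combined with $|\beta| \le C|\alpha|$, gives the corresponding bound for $\bar\eta_\beta$. The main obstacle is ensuring the $O(\cdot)$ constants in the expansions of $\lambda$ and $\bar\eta_\alpha$ are uniform on the conic region cut out by \eqref{alpha.cond}; this I would handle by rescaling $\alpha = r\omega$ with $r = |\alpha|$ and using continuous dependence of the eigenpair on the direction $\omega$ on the relevant compact subset of $\sn$.
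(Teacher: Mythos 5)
Your proposal is correct and follows essentially the same route as the paper: the ansatz $\beta=\alpha-2\frac{\alpha\cd\Lambda e'}{e'\cd\Lambda e'}e'+O(|\alpha|^2)e'$ with the quadratic expansion \eqref{lambda} of $\lambda$, a (de-singularized) implicit-function argument to solve $\lambda(\beta)=\lambda(\alpha)$ using $e'\cd\Lambda e'>0$ and the comparability of $\alpha\cd\Lambda e'$ with $|\alpha|$ from \eqref{alpha.cond}, and perturbation/regularity of the principal eigenfunction to get $\sup_{\tn}|\bar{\eta}_\alpha-1|\le C|\alpha|$. You merely spell out the implicit function theorem step that the paper leaves implicit, which is a fair elaboration rather than a deviation.
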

%------------------------------------------------------------------------
%
\begin{proof}
$\beta(\alpha)$ exists due to the asymptotics for $\lambda(\alpha)$ for $\alpha$ small. Indeed if $\beta=\alpha-2\frac{\alpha\cd\Lambda e'}{e'\cd\Lambda e'}e'+be'$, then
\begin{equation*}
\lambda(\beta)=\beta\cd\Lambda\beta+O(|\beta|^3)=\alpha\cd\Lambda\alpha-2b\alpha\cd\Lambda e'+b^2e'\cd\Lambda e'+O(|\alpha|^3)=\lambda(\alpha)-2b\alpha\cd\Lambda e'+O(|\alpha|^3),
%\lambda_0(\beta^2-\alpha^2)=\lambda(-\beta)-\lambda(\alpha)+O(\alpha^3) \implies |\beta-\alpha|\le\frac{1}{\beta+\alpha}O(\alpha^3)\le C\alpha^2.
\end{equation*}
which can be made to equal $\lambda(\alpha)$ for some $b=O(|\alpha|^2)$, since $\alpha\cd\Lambda e'$ and $|\alpha|$ are comparable. The last inequalities follow from elliptic regularity bounds on $\bar{\eta}_\alpha-\bar{\eta}_0=\bar{\eta}_\alpha-1$.
\end{proof}
Now we construct on increasing intervals an approximating sequence $\eta_{\alpha,T}(t,x)$ with terminal condition based on the ansatz. However, instead of using $-\alpha$ in the second term, we'll use $-\beta$ because the equations for both terms then will have the same coefficient for the linear term. 
\eal{approx.eta}{ll}{ % approximating sequence for \eta_\alpha
\ds(\eta_{\alpha,T})_t+\frac{1}{\nu(x)}\nabla\cd(\nu(x)\nabla\eta_{\alpha,T}) +\frac{F(x)}{\nu(x)}\cd\nabla\eta_{\alpha,T} =\lambda(\alpha)\eta_{\alpha,T}, & \quad t<T, \ x\in\h{t},\\
\eta_{\alpha,T}(t,x)=0, & \quad t<T, \ x\in\partial\h{t},\\
\eta_{\alpha,T}(T,x)= \text{ to be determined} & \quad x\in\h{T},
}
with $\eta_{\alpha,T}(T,x)\ge 0$ to be given by functions of the form 
\begin{equation*} % form for the terminal condition of \eta_{\alpha,T}
\frac{\e{\alpha\cd(x-\ws(e)t\,e)}}{\alpha\cd\Lambda e'}\bar{\eta}_\alpha(x) + C\frac{\e{\beta\cd(x-\ws(e)t\,e)}}{\beta\cd\Lambda e'}\bar{\eta}_{\beta}(x).
\end{equation*}
Note that $\beta\cd\Lambda e'=-\alpha\cd\Lambda e'+O(|\alpha|^2)$, and that  since $\lambda(\alpha)=\lambda(\beta)$, this satisfies \eqref{approx.eta} for every $C$. The free parameter $C$ will be adjusted such as to give an upper and lower bound for $\eta_{\alpha,T}$ at all times. These functions in turn will give the bounds analogous to \eqref{eta.bounds} that will allow us to extract a subsequence converging to $\eta_\alpha$.\\

\noindent Define 
\begin{equation*} % H_l
H_l(t,x;\alpha)=\frac{\e{\alpha\cd(x-\ws(e)t\,e)}}{\alpha\cd\Lambda e'}\bar{\eta}_\alpha(x) + C_l(\alpha)\frac{\e{\beta\cd(x-\ws(e)t\,e)}}{\beta\cd\Lambda e'}\bar{\eta}_{\beta}(x).
%H_l(t,x;\alpha)=\frac{\e{\alpha(x-\ws(e)t\,e)\cd e'}}{\alpha}\bar{\eta}_\alpha(x) - C_l(\alpha)\frac{\e{-\beta(x-\ws(e)t\,e)\cd e'}}{\beta}\bar{\eta}_{-\beta}(x).
\end{equation*}
and
\begin{equation*} % H_u
H_u(t,x;\alpha)=\frac{\e{\alpha\cd(x-\ws(e)t\,e)}}{\alpha\cd\Lambda e'}\bar{\eta}_\alpha(x) + C_u(\alpha)\frac{\e{\beta\cd(x-\ws(e)t\,e)}}{\beta\cd\Lambda e'}\bar{\eta}_{\beta}(x).
%H_u(t,x;\alpha)=\frac{\e{\alpha(x-\ws(e)t\,e)\cd e'}}{\alpha}\bar{\eta}_\alpha(x) - C_u(\alpha)\frac{\e{-\beta(x-\ws(e)t\,e)\cd e'}}{\beta}\bar{\eta}_{-\beta}(x).
\end{equation*}
with constants $C_l$ and $C_u$ for which 
\begin{equation*}
H_l(t,x)\le 0, \quad H_u(t,x)\ge 0 \quad \text{ for all } \ t\in\R, \ x\in\partial\h{t}.
\end{equation*}
A choice for them is 
\begin{equation*} % C_l and C_u definition
C_l(\alpha)=\frac{\beta\cd\Lambda e'}{\alpha\cd\Lambda e'}\max\limits_{x\in\tn}\frac{\bar{\eta}_\alpha(x)}{\bar{\eta}_{\beta}(x)} \quad \text{and} \quad C_u(\alpha)=\frac{\beta\cd\Lambda e'}{\alpha\cd\Lambda e'}\min\limits_{x\in\tn}\frac{\bar{\eta}_\alpha(x)}{\bar{\eta}_{\beta}(x)}.
\end{equation*} 
Lemma \ref{beta} implies that for small $\alpha$,
\begin{equation*} % C_l and C_u bounds
C_l(\alpha)=1+O(|\alpha|)\quad \text{and} \quad C_u(\alpha)=1+O(|\alpha|).
\end{equation*}
Now choose 
\begin{equation*} %\eta_{\alpha,T}(T,x)
\eta_{\alpha,T}(T,x)=\max\{0,H_l(T,x;\alpha)\}.
\end{equation*}
From the maximum principle, for all $t\le T$ and $x\in\h{t}$
\begin{equation} \label{eta.H} % bounding \eta_{\alpha,T} by H
H_l(t,x;\alpha)\le\eta_{\alpha,T}(t,x)\le H_u(t,x;\alpha)
\end{equation}

%----------------------------------------------------------------
\begin{lemma}
There exist $L>0$ and $M>1$ such that
\begin{equation} \label{eta.exp} % bounding \eta_{\alpha,T} by exponentials
M^{-1}\e{\alpha\cd x}\frac{1-\e{-2\frac{\alpha\cd\Lambda e'}{e'\cd\Lambda e'}x\cd e'}}{\alpha\cd\Lambda e'} \le\eta_{\alpha,T}(t,x+\ws(e)t\,e) \le M\e{\alpha\cd x}\frac{1-\e{-2\frac{\alpha\cd\Lambda e'}{e'\cd\Lambda e'}x\cd e'}}{\alpha\cd\Lambda e'}.
%M^{-1}\frac{\e{\alpha x\cd e'}-\e{-\alpha x\cd e'}}{\alpha} \le\eta_{\alpha,T}(t,x+\ws(e)t\,e)\le M\frac{\e{\alpha x\cd e'}-\e{-\alpha x\cd e'}}{\alpha}
\end{equation}
for all $\alpha$ satisfying the conditions in \eqref{alpha.cond}, $T\in\R^+, t\le T$, and $x\in\{x\cd e'\ge L\}$.
\end{lemma}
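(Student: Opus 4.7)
The strategy is to read off the desired bounds from the sandwich \eqref{eta.H}, $H_l\le\eta_{\alpha,T}\le H_u$, by proving that both barriers are pointwise comparable to the target expression $\e{\alpha\cd x}(1-\e{-2\mu x\cd e'})/(\alpha\cd\Lambda e')$ on $\{x\cd e'\ge L\}$, where $\mu:=\alpha\cd\Lambda e'/(e'\cd\Lambda e')$ and $L$ is a universal constant. First, substituting $x\mapsto x+\ws(e)t\,e$ into the definitions of $H_{u/l}$ and using $\beta\cd\Lambda e'=-\alpha\cd\Lambda e'(1+O(|\alpha|))$ from Lemma \ref{beta}, both barriers take the common form
\[
H_{u/l}(t,x+\ws(e)t\,e;\alpha)=\frac{\e{\alpha\cd x}}{\alpha\cd\Lambda e'}\Bigl[\bar{\eta}_\alpha(x+\ws(e)t\,e)-\theta_{u/l}\,\bar{\eta}_\beta(x+\ws(e)t\,e)\,\e{(\beta-\alpha)\cd x}\Bigr],
\]
where the constants $\theta_l=\max_{\tn}(\bar{\eta}_\alpha/\bar{\eta}_\beta)\cdot(1+O(|\alpha|))$ and $\theta_u=\min_{\tn}(\bar{\eta}_\alpha/\bar{\eta}_\beta)\cdot(1+O(|\alpha|))$ are both $1+O(|\alpha|)$. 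Moreover, since $\beta-\alpha$ is parallel to $e'$ and $\mu$ is comparable to $|\alpha|$ by \eqref{alpha.cond}, the exponent simplifies to $(\beta-\alpha)\cd x=-2\mu x\cd e'(1+O(|\alpha|))$.

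Dividing $H_{u/l}$ by the target expression reduces the problem to a scalar comparison: show that the quotient
\[
Q_{u/l}(x;\alpha):=\frac{\bar{\eta}_\alpha-\theta_{u/l}\,\bar{\eta}_\beta\,\e{-2\mu x\cd e'(1+O(|\alpha|))}}{1-\e{-2\mu x\cd e'}}
\]
lies between two positive universal constants for all $\alpha$ satisfying \eqref{alpha.cond} and $x\cd e'\ge L$. I would split into two regimes. When $2\mu x\cd e'\ge 1$, the denominator is bounded away from zero while the numerator equals $1+O(|\alpha|)$ modulo an exponentially small term, so $Q_{u/l}$ is uniformly close to $1$. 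When $0<2\mu x\cd e'\le 1$, I would expand $1-\e{-s}=s+O(s^2)$ to write the numerator as $(\bar{\eta}_\alpha-\theta_{u/l}\bar{\eta}_\beta)+2\mu x\cd e'\,\theta_{u/l}\bar{\eta}_\beta(1+O(|\alpha|))+O((\mu x\cd e')^2)$; the first term is an $O(|\alpha|)$ boundary defect with a one-sided sign inherited from the choice of $\theta_{u/l}$, and the second is $\sim 2\mu x\cd e'\sim 1-\e{-2\mu x\cd e'}$. Choosing $L$ large but universal so that $2\mu L$ exceeds a fixed multiple of $|\alpha|$ (possible because $\mu\gtrsim|\alpha|$ uniformly by \eqref{alpha.cond}), the linear term dominates the additive defect and the desired two-sided comparability follows.

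The main technical obstacle is the $O(|\alpha|^2)x\cd e'$ correction inside $(\beta-\alpha)\cd x$, which is a priori unbounded as $x\cd e'\to\infty$. The resolution is to package it as a multiplicative perturbation of the main exponent, $-2\mu x\cd e'(1+O(|\alpha|))$, and to invoke the elementary fact that for $|\epsilon|\le\epsilon_0$ the ratio $(1-\e{-s(1+\epsilon)})/(1-\e{-s})$ stays in an interval $[1-C\epsilon_0,1+C\epsilon_0]$ uniformly in $s\ge 0$ (which follows from the concavity of $1-\e{-s}$ together with its linear and constant behaviors at $0$ and $\infty$ respectively). The resulting bounds are automatically uniform in $T$ and $t\le T$ because $\eta_{\alpha,T}$ enters the argument only through the sandwich \eqref{eta.H}, completing the proof.
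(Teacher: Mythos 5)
Your proposal is correct and follows essentially the same route as the paper: sandwich $\eta_{\alpha,T}$ between $H_l$ and $H_u$ via \eqref{eta.H}, invoke the Lemma \ref{beta} asymptotics for $\beta$, $\bar{\eta}_\alpha$, $\bar{\eta}_\beta$, split into the regimes $\mu\, x\cd e'\gtrsim 1$ and $\mu\, x\cd e'\lesssim 1$, and in the latter use that the linear term $\sim \mu\, x\cd e'$ dominates the $O(|\alpha|)$ boundary defect once $x\cd e'\ge L$ with $L$ universal. Your handling of the $O(|\alpha|^2)\,x\cd e'$ correction as a uniform multiplicative perturbation of the exponent is a clean equivalent of the paper's observation that this term is $O(|\alpha|)$ on the bounded regime $x\cd e'\lesssim 1/|\alpha|$.
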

%---------------------------------------------------------------
\begin{proof}
To get the lower bound, from \eqref{eta.H} it is suffices to show 
\begin{equation*}
H_l(t,x+\ws(e)t\,e)\ge M_l\e{\alpha\cd x}\frac{1-\e{-2\frac{\alpha\cd\Lambda e'}{e'\cd\Lambda e'}x\cd e'}}{\alpha\cd\Lambda e'} 
%\frac{\e{\alpha x\cd e'}-\e{-\alpha x\cd e'}}{\alpha}
\end{equation*}
for $x\cd e'$ greater then some constant $L$. By definition,
\begin{equation*} %H_l(t,x+\ws(e)t\,e)
H_l(t,x+\ws(e)t\,e) =\frac{1}{\alpha\cd\Lambda e'}\bar{\eta}_{\alpha}(x+\ws(e)t\,e) \left(\e{\alpha\cd x} +C_l(\alpha)\frac{\alpha\cd\Lambda e'}{\beta\cd\Lambda e'}\frac{\bar{\eta}_{\beta}(x+\ws(e)t\,e)}{\bar{\eta}_{\alpha}(x+\ws(e)t\,e)}\e{\beta\cd x}\right).
%H_l(t,x+\ws(e)t\,e) =\frac{1}{\alpha}\bar{\eta}_{\alpha}(x+\ws(e)t\,e) \left(\e{\alpha x\cd e'}-C_l(\alpha)\frac{\bar{\eta}_{-\beta}(x+\ws(e)t\,e)}{\bar{\eta}_{\alpha}(x+\ws(e)t\,e)}\e{-\beta x\cd e'}\right).
\end{equation*}
From lemma \ref{beta}, for all $|\alpha|\le\alpha_0$
\begin{equation*} % bounds on the components
\beta=\alpha-2\frac{\alpha\cd\Lambda e'}{e'\cd\Lambda e'}e'+O(|\alpha|^2)e', \quad \bar{\eta}_{\alpha}(x+\ws(e)t\,e)=1+O(|\alpha|) \quad \text{and} \quad C_l(\alpha)\frac{\alpha\cd\Lambda e'}{\beta\cd\Lambda e'}\frac{\bar{\eta}_{\beta}(x+\ws(e)t\,e)}{\bar{\eta}_{\alpha}(x+\ws(e)t\,e)} =-1+O(|\alpha|).
\end{equation*}
Note that $\frac{1}{\alpha\cd\Lambda e'}\bar{\eta}_{\alpha}(x+\ws(e)t\,e)\ge\frac{1}{2}$ therefore decreasing $a_0$ if necessary, it suffices to show
\begin{equation*}
\frac{1}{2}\left(1+C_l(\alpha)\frac{\alpha\cd\Lambda e'}{\beta\cd\Lambda e'}\frac{\bar{\eta}_{\beta}(x+\ws(e)t\,e)}{\bar{\eta}_{\alpha}(x+\ws(e)t\,e)}\e{(\beta-\alpha)\cd x}\right) \ge M_l(1-\e{-2\frac{\alpha\cd\Lambda e'}{e'\cd\Lambda e'} x\cd e'}).
\end{equation*}
or equivalently,
\begin{equation} \label{low.M_l} % sufficient inequality
1-2M_l \ge C_l(\alpha)\frac{\alpha\cd\Lambda e'}{\beta\cd\Lambda e'}\frac{\bar{\eta}_{\beta}(x+\ws(e)t\,e)}{\bar{\eta}_{\alpha}(x+\ws(e)t\,e)}\e{(\beta-\alpha)\cd x}-2M_l\e{-2\frac{\alpha\cd\Lambda e'}{e'\cd\Lambda e'} x\cd e'}.
\end{equation}
There are two cases to be considered, depending on whether $\frac{\alpha\cd\Lambda e'}{e'\cd\Lambda e'} x\cd e'$ is of order smaller or greater than $O(1)$. For small enough $|\alpha|$, $C_l(\alpha)\frac{\alpha\cd\Lambda e'}{\beta\cd\Lambda e'}\frac{\bar{\eta}_{\beta}(x+\ws(e)t\,e)}{\bar{\eta}_{\alpha}(x+\ws(e)t\,e)}=1+O(|\alpha|)\le 2$, and $(\beta-\alpha)\cd x=\left(-2\frac{\alpha\cd\Lambda e'}{e'\cd\Lambda e'}e'+O(|\alpha|^2)e'\right)\cd x\le-\frac{\alpha\cd\Lambda e'}{e'\cd\Lambda e'}x\cd e'$, so
\begin{equation*} % x\cd e big
C_l(\alpha)\frac{\alpha\cd\Lambda e'}{\beta\cd\Lambda e'}\frac{\bar{\eta}_{\beta}(x+\ws(e)t\,e)}{\bar{\eta}_{\alpha}(x+\ws(e)t\,e)}\e{(\beta-\alpha)\cd x} \le 2\e{-\frac{\alpha\cd\Lambda e'}{e'\cd\Lambda e'}x\cd e'}\le 1/2
\end{equation*}
for $x\cd e'\ge 4\frac{\ln 2e'\cd\Lambda e'}{\alpha\cd\Lambda e'}$, so \eqref{low.M_l} holds in this range for $M_l=1/4$.
On the other hand, if $x\cd e'\le 2\frac{\ln 2e'\cd\Lambda e'}{\alpha\cd\Lambda e'}$, then $\left(\beta-\alpha+2\frac{\alpha\cd\Lambda e'}{e'\cd\Lambda e'}e'\right)\cd x\le C|\alpha|^24\frac{\ln 2e'\cd\Lambda e'}{\alpha\cd\Lambda e'}\le C'|\alpha|$, so
\begin{equation*}\begin{array}{l} % x\cd e small
C_l(\alpha)\ds\frac{\alpha\cd\Lambda e'}{\beta\cd\Lambda e'}\frac{\bar{\eta}_{\beta}(x+\ws(e)t\,e)}{\bar{\eta}_{\alpha}(x+\ws(e)t\,e)}\e{(\beta-\alpha)\cd x}-M_l\e{-2\frac{\alpha\cd\Lambda e'}{e'\cd\Lambda e'} x\cd e'}
=\e{-2\frac{\alpha\cd\Lambda e'}{e'\cd\Lambda e'} x\cd e'}\left(C_l(\alpha)\ds\frac{\alpha\cd\Lambda e'}{\beta\cd\Lambda e'}\frac{\bar{\eta}_{\beta}(x+\ws(e)t\,e)}{\bar{\eta}_{\alpha}(x+\ws(e)t\,e)}\e{(\beta-\alpha+2\frac{\alpha\cd\Lambda e'}{e'\cd\Lambda e'}e')\cd x}-\frac{1}{2}\right)\\
\le (1-C_1|\alpha| x\cd e')((1+C_2|\alpha|)-1/2)\le 1/2 -(C_1x\cd e'-C_2)|\alpha|\le 1/2=1-M_l
\end{array}
\end{equation*}
for $x\cd e'\ge C_2/C_1\equiv L_l$, hence proving \eqref{low.M_l} for all $x\cd e'\ge L_l$.

Similarly for the upper bound we need
\begin{equation*} 
H_u(t,x+\ws(e)t\,e)\le M_u\e{\alpha\cd x}\frac{1-\e{-2\frac{\alpha\cd\Lambda e'}{e'\cd\Lambda e'}x\cd e'}}{\alpha\cd\Lambda e'},
\end{equation*}
or equivalently
\begin{equation*} %H_u(t,x+\ws(e)t\,e)
\bar{\eta}_{\alpha}(x+\ws(e)t\,e) \left(1+C_u(\alpha)\ds\frac{\alpha\cd\Lambda e'}{\beta\cd\Lambda e'}\frac{\bar{\eta}_{\beta}(x+\ws(e)t\,e)}{\bar{\eta}_{\alpha}(x+\ws(e)t\,e)}\e{(\beta-\alpha)\cd x}\right) \le M_u\left(1-\e{-2\frac{\alpha\cd\Lambda e'}{e'\cd\Lambda e'}x\cd e'}\right).
\end{equation*}
In an analogous way, $\bar{\eta}_\alpha(x+\ws(e)t\,e)\le 2$, thus a sufficient condition would be
\begin{equation} \label{up.M_u} % sufficient inequality
M_u\e{-2\frac{\alpha\cd\Lambda e'}{e'\cd\Lambda e'}x\cd e'} +2C_u(\alpha)\frac{\alpha\cd\Lambda e'}{\beta\cd\Lambda e'}\frac{\bar{\eta}_{\beta}(x+\ws(e)t\,e)}{\bar{\eta}_{\alpha}(x+\ws(e)t\,e)}\e{(\beta-\alpha)\cd x}\le M_u-2.
\end{equation}
For $M_u=4$ and $x\cd e\ge\frac{\ln 2\,e'\cd\Lambda e'}{4\alpha\cd\Lambda e'}$,
\begin{equation*} % x\cd e big
M_u\e{-\frac{\alpha\cd\Lambda e'}{e'\cd\Lambda e'}x\cd e'}\le 2=M_u-2.
\end{equation*}
For $x\cd e'\le\frac{\ln 2\,e'\cd\Lambda e'}{4\alpha\cd\Lambda e'}$, then $\left(\beta-\alpha+2\frac{\alpha\cd\Lambda e'}{e'\cd\Lambda e'}e'\right)\cd x\ge-C|\alpha|^24\frac{\ln 2e'\cd\Lambda e'}{\alpha\cd\Lambda e'}\ge -C'|\alpha|$, so
\begin{equation*}\begin{array}{l} % x\cd e small
M_u\e{-2\frac{\alpha\cd\Lambda e'}{e'\cd\Lambda e'}x\cd e'} +2C_u(\alpha)\frac{\alpha\cd\Lambda e'}{\beta\cd\Lambda e'}\frac{\bar{\eta}_{\beta}(x+\ws(e)t\,e)}{\bar{\eta}_{\alpha}(x+\ws(e)t\,e)}\e{(\beta-\alpha)\cd x}
=2\e{-2\frac{\alpha\cd\Lambda e'}{e'\cd\Lambda e'}x\cd e'}\left(2+C_u(\alpha)\ds\frac{\alpha\cd\Lambda e'}{\beta\cd\Lambda e'}\frac{\bar{\eta}_{\beta}(x+\ws(e)t\,e)}{\bar{\eta}_{\alpha}(x+\ws(e)t\,e)}\e{(\beta-\alpha+2\frac{\alpha\cd\Lambda e'}{e'\cd\Lambda e'}e')\cd x}\right)\\
\le 2(1-C_1|\alpha| x\cd e')(2-(1-C_2|\alpha|))\le 2(1-(C_1x\cd e'-C_2)|\alpha|)\le 2=M_u-2
\end{array}
\end{equation*}
for $x\cd e'\ge C_1/C_2\equiv L_u$.

So the result is true with $M=4$ and $L=\max\{L_l,L_u\}$.
\end{proof}

To extend \eqref{eta.exp} to all of $\h{t}$ we need to control $\eta_\alpha(t,x+\ws(e)t\,e)$ for $x\cd e'\in[0,L]$. Since we expect $\eta_{\alpha,T}(t,x)\simeq\e{\alpha\cd (x-\ws(e)t\,e)}$ near the boundary, let us express $\eta_{\alpha,T}(t,x)=\e{\alpha\cd(x-\ws(e)t\,e)}N_{\alpha,T}(t,x)$. Then $N_{\alpha,T}(t,x)$ solves
\ea{ll}{  %equation for N_{\alpha,T} 
\ds(N_{\alpha,T})_t +\frac{1}{\nu(x)}\nabla\cd(\nu(x)\nabla N_{\alpha,T}) +\left(\frac{F(x)}{\nu(x)}+\alpha\right)\cd\nabla N_{\alpha,T} +\frac{\alpha}{\nu(x)}\cd\nabla(\nu(x)N_{\alpha,T})& \\
+\ds\alpha\cd\left(\frac{F(x)}{\nu(x)}-\ws(e)e\right)N_{\alpha,T} =(\lambda(\alpha)-|\alpha|^2)N_{\alpha,T}, & \quad t<T, \ x\in\h{t},\\
N_{\alpha,T}(t,x)=0, & \quad t<T, \ x\in\partial\h{t},\\
N_{\alpha,T}(T,x)=\e{-\alpha\cd(x-\ws(e)t\,e)}\max\{0,H_l(T,x;\alpha) \}  & \quad x\in\h{T}.
}
%
%To extend \eqref{eta.exp} to all of $\h{t}$ note that for small $\alpha$, the coefficients of \eqref{approx.eta} are uniformly bounded, therefore by parabolic regularity there exists a constant $m>0$ such that for all $t\le T/2$ and $x\in\partial\h{t}$,
Note that the coefficients of the equation and $N_{\alpha,T}(T,x)$ are uniformly bounded for $|\alpha|\le a_0$, therefore by parabolic regularity there exists a constant $M>0$ such that for all $t\le T/2$ and $x\in\partial\h{t}$
\begin{equation*} % Hopf on the derivative at boundary
|N_{\alpha,T}|\le M \quad \text{ and } \quad m\le\nabla N_{\alpha,T}(t,x)\cd e'\le M.
\end{equation*}
Using this, 
\begin{equation*} % gradient of 
\nabla\eta_{\alpha,T}\cd e'=\e{\alpha\cd(x-\ws(e)t\,e)}\left(\alpha N_{\alpha,T}+\nabla N_{\alpha,T}\right)\cd e',
\end{equation*}
so taking $a_0\le 1/2$
\begin{equation*}
(2M)^{-1}\e{\alpha\cd(x-\ws(e)t\,e)} \le\nabla\eta_{\alpha,T}\cd e'\le 2M\e{\alpha\cd(x-\ws(e)t\,e)}
\end{equation*}

\noindent On the other hand
\begin{equation*} % gradient of the exponentials
\nabla\left(\frac{\e{\alpha\cd(x-\ws(e)t\,e)} -\e{\alpha\cd(x-\ws(e)t\,e) -2\frac{\alpha\cd\Lambda e'}{e'\cd\Lambda e'}(x-\ws(e)t\,e)\cd e'}}{\alpha\cd\Lambda e'}\right)\cd e' =\frac{2}{e'\cd\Lambda e'}\e{\alpha\cd(x-\ws(e)t\,e)}
\end{equation*}
on $\partial\h{0}$, therefore by increasing $M$ if necessary, we get that
\begin{equation*} % exponential bound on \eta_{\alpha,T} everywhere
M^{-1}\e{\alpha\cd x}\frac{1-\e{-2\frac{\alpha\cd\Lambda e'}{e'\cd\Lambda e'}x\cd e'}}{\alpha\cd\Lambda e'} \le\eta_{\alpha,T}(t,x+\ws(e)t\,e) \le M\e{\alpha\cd x}\frac{1-\e{-2\frac{\alpha\cd\Lambda e'}{e'\cd\Lambda e'}x\cd e'}}{\alpha\cd\Lambda e'}
%M^{-1}\frac{\e{\alpha x\cd e'}-\e{-\alpha x\cd e'}}{\alpha} \le\eta_{\alpha,T}(t,x+\ws(e)t\,e)\le M\frac{\e{\alpha x\cd e'}-\e{-\alpha x\cd e'}}{\alpha}
\end{equation*}
for all $T\in\R^+, t\le T/2, x\in\h{0}$. Now we can extract a subsequence $\eta_{\alpha,T_k}(t,x)$ that converges locally uniformly to $\eta_{\alpha}(t,x)$ while preserving all the bounds of lemma \ref{lemma.eta}.\\

Finally, to prove the bound in \eqref{eta.bounds} we need to compare $\frac{1-\e{-2\frac{\alpha\cd\Lambda e'}{e'\cd\Lambda e'}x\cd e'}}{\alpha\cd\Lambda e'}$ to $\frac{1-\e{-2\alpha\cd e'x\cd e'}}{\alpha\cd e'}$. This is where the condition $2^{-1}\alpha\cd e'\le\frac{\alpha\cd\Lambda e'}{e'\cd\Lambda e'}\le 2\alpha\cd e'$ in \eqref{alpha.cond} plays a role. In deed, for such $\alpha$ and $x\cd e'\ge 0$ the following inequalities hold:
\begin{equation*} % real upper bound for \eta_\alpha
\frac{1-\e{-2\frac{\alpha\cd\Lambda e'}{e'\cd\Lambda e'}x\cd e'}}{\alpha\cd\Lambda e'} \le\frac{2}{e'\cd\Lambda e'}\frac{1-\e{-4\alpha\cd e'x\cd e'}}{\alpha\cd e'} =\frac{2}{e'\cd\Lambda e'}\frac{(1-\e{-2\alpha\cd e'x\cd e'})(1+\e{-2\alpha\cd e'x\cd e'})}{\alpha\cd e'} \le\frac{4}{e'\cd\Lambda e'}\frac{1-\e{-2\alpha\cd e'x\cd e'}}{\alpha\cd e'},
\end{equation*}
and
\begin{equation*} % real lower bound for \eta_\alpha
\frac{1-\e{-2\frac{\alpha\cd\Lambda e'}{e'\cd\Lambda e'}x\cd e'}}{\alpha\cd\Lambda e'} \ge\frac{1}{2e'\cd\Lambda e'}\frac{1-\e{-\frac{1}{2}\alpha\cd e'x\cd e'}}{\alpha\cd e'} =\frac{1}{2e'\cd\Lambda e'}\frac{1-\e{-\alpha\cd e'x\cd e'}}{(1+\e{-\frac{1}{2}\alpha\cd e'x\cd e'})\alpha\cd e'} \ge\frac{1}{4e'\cd\Lambda e'}\frac{1-\e{-\alpha\cd e'x\cd e'}}{\alpha\cd e'}.
\end{equation*}
Combining the previous bounds on $\eta_\alpha$ with these inequalities, increasing $M$ if necessary, we conclude the proof of lemma \ref{lemma.eta}.

\subsubsection{Proof of proposition \ref{lower.estimate}}

To get the result in proposition \ref{lower.estimate} we first bound $\int x\cd e' p(t,x+\ws(e)t\,e)\,dx$ from above outside of a ball of radius $A\sqrt{t}$. This will be achieved by splitting the domain into $2n-1$ regions and then using the exponential moment bounds in \eqref{int.bound} in each of them. Let $\xi\in\rn$ be a vector satisfying $\xi\cd e'>0, \frac{1}{2}\xi\cd e'\le\frac{\xi\cd\Lambda e'}{e'\cd\Lambda e'}\le 2\xi\cd e'$, and $|\xi|\le\sqrt{n}\xi\cd e'$. Then for $\alpha=\frac{1}{\sqrt{t}}\xi$, \eqref{int.bound} gives
\begin{equation*} % exp moment at 1/\sqrt{t}
\left(\ih{0}\e{2\frac{x\cd\xi}{\sqrt{t}}}\frac{1-\e{-4\frac{\xi\cd e'x\cd e'}{\sqrt{t}}}}{2\xi\cd e'x\cd e'}p^2(t,x+\ws(e)t\,e)\,dx\right)^{1/2}
\le C\frac{\e{\frac{\lambda(2\xi/\sqrt{t})}{2}t}}{t^{\frac{n+1}{4}}}\ih{0}\e{\frac{x\cd\xi}{\sqrt{t}}}\frac{1-\e{-2\frac{\xi\cd e'x\cd e'}{\sqrt{t}}}}{\xi\cd e'}p_0(x)\,dx.
\end{equation*}
for $t\ge T_0$. By increasing $T_0$ if necessary for $x\in supp(p_0)$ we have 
\begin{equation*} % exp part at t=0
\e{\frac{x\cd\xi}{\sqrt{t}}}\frac{1-\e{-2\frac{\xi\cd e'x\cd e'}{\sqrt{t}}}}{\xi\cd e'}\le 8\frac{x\cd e'}{\sqrt{t}}.
\end{equation*}
Moreover from lemma \ref{lemma.eta} for $T_0$ large, $\lambda(2\xi/\sqrt{t})\le 8\frac{\xi\cd\Lambda\xi}{t}$, so $\e{\frac{\lambda(2\xi/\sqrt{t})}{2}t}\le\e{4\xi\cd\Lambda\xi}$.  On the other hand if $\xi\cd e'x\cd e'\ge A\sqrt{t}$ for $A$ large enough, $1-\e{-4\frac{\xi\cd e'x\cd e'}{\sqrt{t}}}\ge 1/2$. So combining all together, for $A$ and $T_0$ large we obtain
\begin{equation*} % clean exp moment at 1/\sqrt{t}
\left(\ih{0}\frac{\e{2\frac{x\cd\xi}{\sqrt{t}}}}{\xi\cd e'x\cd e'}p^2(t,x+\ws(e)t\,e)\,dx\right)^{1/2}
\le C\frac{\e{4\xi\cd\Lambda\xi}}{t^{\frac{n+3}{4}}}\ih{0}{x\cd e'}p_0(x)\,dx \le C\frac{\e{4\xi\cd\Lambda\xi}}{t^{\frac{n+3}{4}}}I(0),
\end{equation*}
where $I(t)$ is defined as in \eqref{lin.mom}.

Without loss of generality, using a change of variables we may assume that $e'=e_1$ and $\Lambda e'=(\lambda_1,\ldots,\lambda_n)$, where $\lambda_1>0$ due to the positive definiteness of $\Lambda$. Let $\zeta=(\zeta_1,\ldots,\zeta_n)$ be such that $\zeta_i>0$ for all $i$ and $1/2\lambda_1\zeta_1\le\sum|\lambda_i\zeta_i|\le 2\lambda_1\zeta_1$, $|\zeta|\le\sqrt{n}\zeta_1$, so that any vector of the form $(\zeta_1,\pm\zeta_2,\ldots,\pm\zeta_n)$ would be a candidate for $\xi$ in the inequality above.

For any choice of signs $\sigma\in{1}\times\{\pm 1\}^{n-1}$, let $\xi^\sigma=(\sigma_1\zeta_1,\ldots,\sigma_n\zeta_n)$. We will divide the region of interest into $2n-1$ regions as follows: $R_t^\sigma=\{\xi^\sigma_ix_i\ge |\xi^\sigma_i|B\sqrt{t} : i=1,\ldots,n\}=\{x_1\ge 0, \sigma_ix_i\ge B\sqrt{t} : i=2,\ldots,n\}$ and $R_t^{e_1}=\{x_1\ge A\sqrt{t}, |x_i|\le B\sqrt{t}\}$. Then taking $\xi$ in the inequality above to be $\xi^\sigma$ in $R_t^\sigma$ we get
\begin{equation*} % bound on first moment in R^\sigma
\begin{array}{l}
\ds\int\limits_{R_t^\sigma}\nu(x)f(t,x+\ws(e)t\,e)p(t,x+\ws(e)\,te)\,dx\le C\int\limits_{R_t^\sigma}x_1p(t,x+\ws(e)\,te)\,dx\\
\ds=C\int\limits_{R_t^\sigma}\frac{\e{\frac{x\cd\xi}{\sqrt{t}}}}{\sqrt{\xi_1x_1}}p(t,x+\ws(e)\,te)\sqrt{\xi_1}(x_1)^{3/2}\e{-\frac{x\cd\xi}{\sqrt{t}}}\,dx
\le C\left(\int\limits_{\h{0}}\frac{\e{2\frac{x\cd\xi}{\sqrt{t}}}}{\xi_1x_1}p^2(t,x+\ws(e)\,te)\,dx\right)^{1/2} \left(\int\limits_{R_t^\sigma}x_1^{3}\e{-2\frac{x\cd\xi}{\sqrt{t}}}\,dx\right)^{1/2}\\
\ds\le\frac{C}{t^{\frac{n+3}{4}}}I(0) \left(\int\limits_{\xi_1x_1\ge 0} x_1^{3}\e{-\frac{2\xi_1}{\sqrt{t}}x_1}\,dx_1 \int\limits_{\xi_2x_2\ge|\xi_2|B\sqrt{t}} \e{-\frac{2\xi_2}{\sqrt{t}}x_2}\,dx_2 \ldots \int\limits_{\xi_nx_n\ge|\xi_n|B\sqrt{t}} \e{-\frac{2\xi_n}{\sqrt{t}}x_n}\,dx_n\right)^{1/2}\\
\ds\le\frac{C}{t^{\frac{n+3}{4}}}I(0) \left(t^2\int\limits_{0}^\infty y_1^{3}\e{-y_1}\,dy_1 \times\sqrt{t}\int\limits_{B}^\infty\e{-y_2}\,dy_2 \times\ldots\times \sqrt{t}\int\limits_{B}^\infty \e{-y_n}\,dy_n\right)^{1/2}\le\phi(B)I(0),
\end{array}
\end{equation*}
where $\phi(B)$ is a function that decays to $0$ as $B\to\infty$. Similarly we may use $\xi=e_1$ in $R^{e_1}$ to get

\begin{equation*} % bound on first moment in R^{e_1}
\begin{array}{l}
\ds\int\limits_{R_t^{e_1}}\nu(x)f(t,x+\ws(e)t\,e)p(t,x+\ws(e)\,te)\,dx \le C\left(\int\limits_{\h{0}}\frac{\e{2\frac{x_1}{\sqrt{t}}}}{x_1}p^2(t,x+\ws(e)\,te)\,dx\right)^{1/2} \left(\int\limits_{R_t^{e_1}}x_1^{3}\e{-2\frac{x_1}{\sqrt{t}}}\,dx\right)^{1/2}\\
\ds\le\frac{C}{t^{\frac{n+3}{4}}}I(0) \left(\int\limits_{x_1\ge A\sqrt{t}} x_1^{3}\e{-\frac{2\xi_1}{\sqrt{t}}x_1}\,dx_1 \int\limits_{|x_2|\le B\sqrt{t}}1\,dx_2 \ldots \int\limits_{|x_n|\le B\sqrt{t}}1\,dx_n\right)^{1/2}\\
\ds\le\frac{C}{t^{\frac{n+3}{4}}}I(0) \left(t^2\int\limits_{A}^\infty y_1^{3}\e{-y_1}\,dy_1 \times\sqrt{t}\int\limits_{-B}^B1\,dy_2 \times\ldots\times \sqrt{t}\int\limits_{-B}^B1\,dy_n\right)^{1/2}\le\phi(A)\Phi(B)I(0),
\end{array}
\end{equation*}
where $\Phi(B)$ increases as $B\to\infty$. 

Finally let $R'_t=\{x: 0\le x_1\le A'\sqrt{t}, |x_i|\le B\sqrt{t}, i=2,\ldots,n\}$. Using the upper bound in proposition \ref{linearbounds} in this region,
\begin{equation*} % bound on first moment in R^'
\ds\int\limits_{R'_t}\nu(x)f(t,x+\ws(e)t\,e)p(t,x+\ws(e)\,te)\,dx \le C\ds\int\limits_{R'_t}x_1p(t,x+\ws(e)\,te)\,dx \le \frac{C}{t^{\frac{n+2}{2}}}\int\limits_{R'}x_1^2\,dx =\psi(A'),
\end{equation*}
with $\psi(A')$ decreasing as $A'\to 0$. \\

\noindent Combining all of the above together we have that on $$D_t(A',A,B)=\{A'\sqrt{t}\le x_1\le A\sqrt{t}, |x_i|\le B\sqrt{t}: i=2,\ldots,n\}=\h{0}\backslash\{R^{e_1}_t\cup R'_t\cup(\cup_{\sigma\in 1\times\{\pm 1\}^{n-1}} R^\sigma_t)\},$$
with $A, A'$ and $B$ to be chosen later, the same integral is bounded from below by 
\begin{equation*}
\begin{array}{l}
\ds\int\limits_{D_t}\nu(x)f(t,x+\ws(e)t\,e)p(t,x+\ws(e)\,te)\,dx  =I(t)-\int\limits_{R^{e_1}_t\cup R'_t\cup(\cup_{\sigma} R^\sigma_t)}\nu(x)f(t,x+\ws(e)t\,e)p(t,x+\ws(e)\,te)\,dx\\
\ge I(t)-(\phi(B)I(0)+\psi(A')+(2n-2)\phi(A)\Phi(B)I(0))\ge \frac{1}{2}I(0).
\end{array}
\end{equation*}
To get the last inequality we first pick $B$ large enough for $\phi(B)\le 1/6$, $A'$ small enough for $\psi(A')\le I(0)/6$, and finally $A$ large enough (depending on $B$) that $(2n-2)\phi(A)\Phi(B)\le 1/6$. Note that we also used the fact that $I(t)=I(0)$.

For this choice of $A,A',B$, divide $D_t$ into two subregions as follows: $D_t=D_t^+\cup D_t^-$, where $D_t^+=\{x\in D_t: x\ge \frac{c_0}{t^{\frac{n+1}{2}}}\}$ and $D_t^+=\{x\in D_t: x\ge \frac{c_0}{t^{\frac{n+1}{2}}}\}$, with $c_0>0$ to be picked later. Then 
\begin{equation*}
\frac{1}{2}I(0) \le\int\limits_{D_t^+}\nu(x)f(t,x)p(t,x)\,dx +\int\limits_{D_t^-}\nu(x)f(t,x)p(t,x)\,dx \le\int\limits_{D_t^+}\nu(x)f(t,x)p(t,x)\,dx +Cc_0A^2B^{n-1}, %\frac{c_0}{t^{\frac{n+1}{2}}}A^2B^{n-1}t^{\frac{n-1}{2}},
\end{equation*}
so choosing $c_0$ so that $Cc_0A^2B^{n-1}=I(0)/4$, we end up with 
\begin{equation*}
\frac{1}{4}I(0) \le\int\limits_{D_t^+}\nu(x)f(t,x)p(t,x)\,dx.
\end{equation*}
Using once again the upper bound in proposition \ref{linearbounds},
\begin{equation*}
\frac{1}{4}I(0) \le\int\limits_{D_t^+}\nu(x)f(t,x)p(t,x)\,dx \le C\int\limits_{D_t^+}\frac{I(0)}{t^{\frac{n+2}{2}}}x_1^2\,dx \le C\frac{1}{t^{\frac{n+2}{2}}}I(0)A^2t|D_t^+|,
\end{equation*}
which in turn implies that $|D_t^+|\ge \frac{1}{4CA^2}t^{\frac{n}{2}}$, concluding therefore the proof of proposition \ref{lower.estimate}.

\subsubsection{End of proof of lower bound in proposition \ref{linearbounds}}

To obtain a lower bound on $p(t,x)$ we will combine the result in proposition \ref{lower.estimate} with the heat kernel bounds from lemma \ref{heat.kernel.bounds}. Indeed we have that
\begin{equation*}
p(t,x)=\ih{s}\nu(y)p(s,y)\bar{\Gamma}(t,x,s,y)\,dy,
\end{equation*}
where
\begin{equation*}
\bar{\Gamma}(t,x+\ws(e)t\,e,s,y+\ws(e)s\,e)=\bar{\Gamma}(t,x,s,y-\ws(e)(t-s)\,e) \ge\frac{a}{K(t-s)^{\frac{n}{2}}}\e{-K\frac{|x-y|^2}{t-s}} 
\end{equation*}
for $\delta\in(0,1), R>0, x_0\in\rn, t-s\in [0,R^2], x,y\in B_{\delta R}(x_0)$, and
\begin{equation*}
p(s,y+\ws(e)s\,e)\ge\frac{c_0}{s^{\frac{n+1}{2}}} \quad \text{ for } y\in D_s^+\subseteq \{y\cd e'\in(A'\sqrt{s},A\sqrt{s}), |y^{\perp}|\le B\sqrt{s}\}.,
\end{equation*}
In order to use both bounds above, we need to pick $x_0$, $\delta$ and $R$ in such a way that $D_s^+\subseteq B_{\delta R}(x_0)$ and $s, t-s$ and $t$ are comparable. Let $s=\epsilon t$, with $\epsilon\in(0,1)$ to be determined later. Then $R=\sqrt{1-\epsilon}\sqrt{t}$. If $D_s^+\subseteq \{y\cd e'\in(A'\sqrt{s},A\sqrt{s}), |y^{\perp}|\le B\sqrt{s}\}\subseteq B_{\delta R}(x_0)$ is satisfied for some $\delta$ and $x_0$, then for $x\in B_{\delta R}(x_0)$,
\begin{equation*}
\begin{array}{l}
\ds p(t,x+\ws(e)t\,e)\ge \ds C\int\limits_{D_s^+}p(s,y+\ws(e)s\,e)\bar{\Gamma}(t,x+\ws(e)t\,e,s,y+\ws(e)s\,e)\,dy\\
\ds\ge C|D^+_s|\min\limits_{y\in D^+_s}\{p(s,y+\ws(e)s\,e)\bar{\Gamma}(t,x+\ws(e)t\,e,s,y+\ws(e)s\,e)\} \ge \frac{C}{\sqrt{s}(t-s)^{\frac{n}{2}}}=\frac{C}{t^\frac{n+1}{2}},
\end{array}
\end{equation*}
with the constant $C$ depending on $\epsilon$ but not on $t$. This is in particular true as long as
\begin{equation*}
\{y\cd e'\in(A'\sqrt{\epsilon}\sqrt{t},A\sqrt{\epsilon}\sqrt{t}), |y^{\perp}|\le B\sqrt{\epsilon}\sqrt{t}\}\subseteq B_{\delta\sqrt{1-\epsilon}\sqrt{t}}(x_0).
\end{equation*}
So we need to pick $x_0,\delta$ and $R$ to satisfy this, along with $B\sqrt{\epsilon}\sqrt{t}\}\subseteq B_{\delta\sqrt{1-\epsilon}\sqrt{t}}(x_0)\subseteq \h{0}$. Choosing $x_0=R=\sqrt{1-\epsilon}\sqrt{t}e'$, these conditions produce the system
\begin{equation*}
(1+\delta)R\ge A\sqrt{s}, \quad 0\le(1-\delta)R\le A'\sqrt{s}, \quad (\delta R)^2\ge(A\sqrt{s}-R)^2+(n-1)^2B^2s,
\end{equation*}
or equivalently
\begin{equation*}
1+\delta\ge A\sqrt{\frac{\epsilon}{1-\epsilon}}, \quad 0\le 1-\delta\le A'\sqrt{\frac{\epsilon}{1-\epsilon}}, \quad \delta^2\ge\left(A\sqrt{\frac{\epsilon}{1-\epsilon}}-1\right)^2+\left((n-1)\sqrt{\frac{\epsilon}{1-\epsilon}}B\right)^2.
\end{equation*}
Define $r=\sqrt{\frac{\epsilon}{1-\epsilon}}$. Choose $\epsilon\in(0,1)$ so that $r=\frac{1}{A+A'/4}$ and $\delta$ to be $\delta=\frac{A-A'/4}{A+A'/4}\in(0,1)$. For the third condition above to be satisfied we need $(A-A'/4)^2\ge(A'/4)^2+(n-1)^2B^2$. But we can always assure that this holds, because in the proof of proposition \ref{lower.estimate} we first choose $B$ and $A'$ to insure two of the integrals are small, then $A$ to be bigger than a constant depending on $B$. Therefore increasing $A$ if necessary we conclude that with this choice of $x_0,\epsilon$ and $\delta$ all the conditions are satisfied.

With the above choice of constants, we get that the lower bound $p(t,x+\ws(e)e)\ge\frac{C}{t^{\frac{n+1}{2}}}$ holds for all $x\in B_{\delta R}(x_0)=B_{\frac{A-A,/4}{A+A'/4}\sqrt{1-\epsilon}\sqrt{t}}(\sqrt{1-\epsilon}\sqrt{t}e')=B_{\rho\sqrt{t}}(\sigma\sqrt{t}e')$.
%If we pick $x_0=N\sqrt{\epsilon}\sqrt{t\,}e'$, the condition becomes equivalent to $\{y\cd e'\in(N^{-1}\sqrt{\epsilon},N\sqrt{\epsilon}), |y^{\perp}|\le N\sqrt{\epsilon}\}\subseteq B_{\delta\sqrt{1-\epsilon}}(N\sqrt{\epsilon}\,e')$, which in turn is satisfied if $B_{\sqrt{n}N\sqrt{\epsilon}}(N\sqrt{\epsilon}\,e')\subseteq B_{\delta\sqrt{1-\epsilon}}(N\sqrt{\epsilon}\,e')$. Therefore we need to find $\delta,\epsilon\in(0,1)$ such that $\sqrt{n}N\sqrt{\epsilon}\le\delta\sqrt{1-\epsilon}$. Choosing for example $\delta=\frac{1}{2}$ and $\epsilon=\frac{1}{4nN^2+1}$ we get that the lower bound above holds for all $x\in B_{\delta R}(x_0)=B_{\sqrt{n}\sigma\sqrt{t}}(\sigma\sqrt{t}\,e')$, concluding the proof of the lower bound in proposition \ref{linearbounds}.

\section{Linearized problem in the log-corrected space - proofs} \label{logspace}

This section is dedicated to proving Proposition \ref{p.bar.est} by combining a decay estimate in positions of order $O(\sqrt{t})$ together with a comparison to bounds from an approximate solution in a bounded domain. The estimates for $\sqrt{t}$ locations generalize those in  Proposition \ref{linearbounds} and are proved in section \ref{sec.sqrtdecay}.

Section \ref{sec.approx} is dedicated to obtaining bounds on the solution of the equation in domains of diameter $\sqrt{t}$ located at the front of $\ws(e)t\,e$. This is done by first constructing a function $q$ that solves an approximate equation instead of \eqref{pbar}, and has the advantage that has the exact asymptotics that the solution of the exact equation is expected to have at domains of size $\sqrt{t}$.

In the final section \ref{sec.logprop} we combine Propositions  \ref{sqrttdecay} and \ref{approxsol} to prove the main result.

\subsection{Decay at positions of order $O(\sqrt{t})$}\label{sec.sqrtdecay}
The first step towards the proof of the main proposition is the observation that at locations of the form $(\ws(e)t+x_0+\epsilon\sqrt{t})e$, $\bar{p}$ is of order $\frac{1}{t^{\frac{n+1}{2}}}$.

%--------------------------------------------------------------------
\begin{prop}\label{sqrttdecay}
Given $x_0>0$ and $\epsilon>0$, there exist $\epsilon'>0$ and $C_\epsilon>0$ such that 
\begin{equation*}
\frac{1}{C_\epsilon t^{\frac{n+1}{2}}}\le\bar{p}(t,(\ws(e)t+x_0+\epsilon\sqrt{t})e+x)\le \frac{C_\epsilon}{t^{\frac{n+1}{2}}}
\end{equation*}
for all $x\in B_{\epsilon'\sqrt{t}}(0)$ and $t\ge T_0$.
\end{prop}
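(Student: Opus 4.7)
My strategy is to generalize the entire machinery developed in Section \ref{linearspace} for $p$ to the perturbed equation \eqref{pbareq} for $\bar p$, treating the factor $(1-\beta(t))^{-1}$ as a small perturbation. The decisive observation is \eqref{beta.asymp}: $\beta(t)=O(1/t)$ with $|\beta|\le 1/2$, so $(1-\beta(t))^{-1}=1+O(1/t)$ and any additional term of the form $\int_{T_0}^{t}\beta(s)/s\,ds$ contributes only an $O(1)$ multiplicative factor. Once analogs of Proposition \ref{linearbounds} and Proposition \ref{lower.estimate} are established for $\bar p$, the statement of Proposition \ref{sqrttdecay} follows by combining an $\sqrt{t}$-scale pointwise upper bound with a heat-kernel propagation of a pointwise lower bound to the specific ball $(\ws(e)t+x_0+\epsilon\sqrt{t})e+B_{\epsilon'\sqrt{t}}(0)$.

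\noindent\textbf{Upper bound.} I would repeat the $L^1\to L^2\to L^\infty$ argument from the proof of Proposition \ref{linearbounds}. Multiplying \eqref{pbareq} by $\nu\bar p$ and integrating gives
\begin{equation*}
\frac{d}{dt}\ih{t}\nu\bar p^{2}\,dx=-\frac{2}{1-\beta(t)}\ih{t}\nu|\nabla\bar p|^{2}\,dx,
\end{equation*}
so the Nash-type inequality \eqref{nash.eq} yields the same differential inequality \eqref{p^2.2} up to a factor $(1-\beta(t))^{-1}\in[1,2]$. The linear moment is controlled by the conserved integral $I(t)=\int\nu\bar f(t,x)\bar p(t,x)\,dx$, where $\bar f$ is the backwards-adjoint function constructed as in Lemma \ref{lemma.backwards} but solving $(1-\beta(t))\bar f_{t}+\nu^{-1}\nabla\cdot(\nu\nabla\bar f)+\nu^{-1}F\cdot\nabla\bar f=0$; the approximating sequence $\bar f^{(n)}$ and the barriers $G(t,x)\pm\alpha$ used in the proof carry over verbatim since the $\beta(t)$ term does not affect the comparison principle. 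Solving the ODE and dualizing gives $\bar p(t,\ws(e)t\,e+y)\le C\,y\cdot e'/t^{n/2+1}$, and evaluating at $y=(x_{0}+\epsilon\sqrt{t})e+x$ with $x\in B_{\epsilon'\sqrt{t}}(0)$ produces the desired $C_\epsilon t^{-(n+1)/2}$ bound.

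\noindent\textbf{Lower bound.} I would adapt the exponential-moment apparatus of Section \ref{linearspace}. Constructing $\bar\zeta$ as in Lemma \ref{lemma.zeta} (the periodic corrector equation is unaffected by $\beta(t)$) and $\bar\eta_\alpha$ as in Lemma \ref{lemma.eta} but adjusted so that $\lambda(\alpha)$ is replaced by $\lambda(\alpha)/(1-\beta(t))$, the evolution of the linear and quadratic moments becomes
\begin{equation*}
\frac{dI_{\alpha}}{dt}=\frac{\lambda(\alpha)}{1-\beta(t)}I_{\alpha},\qquad
\frac{dV_{\alpha}}{dt}=\frac{\lambda(2\alpha)}{1-\beta(t)}V_{\alpha}-\frac{2}{1-\beta(t)}D_{\alpha}.
\end{equation*}
Because $\int_{T_0}^{t}\beta(s)\,ds=O(\log t)$ enters only into multiplicative prefactors of the form $t^{O(1/t)\cdot t}\sim 1$, the Nash inequality \eqref{Nash.n+2} still gives $V_\alpha(t)\le C I_\alpha(0)^{2}t^{-(n+2)/2}e^{\lambda(2\alpha)t}$, and the conclusion of Proposition \ref{lower.estimate} holds for $\bar p$: there exist $a,A',A,B,c_0>0$ such that $\bar p(t,y+\ws(e)t\,e)\ge c_{0}/t^{(n+1)/2}$ on a set $D_{t}^{+}\subseteq\{A'\sqrt{t}\le y\cdot e'\le A\sqrt{t},\ |y^{\perp}|\le B\sqrt{t}\}$ of measure $\ge a t^{n/2}$. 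Finally, to obtain the pointwise lower bound at the specific location in the statement, I would use the analog of Lemma \ref{heat.kernel.bounds} for the heat kernel $\bar\Gamma$ of \eqref{pbareq} (the $(1-\beta(t))^{-1}$ factor preserves Norris's conditions since it is uniformly elliptic), applied on a cylinder $\mathcal{C}(s,x_{*},R,e')$ of radius $R=\gamma\sqrt{t}$ with $\gamma$ large enough and center $x_{*}$ chosen so that both $D_{s}^{+}$ (at $s=\epsilon''t$) and the target ball $(x_0+\epsilon\sqrt{t})e+B_{\epsilon'\sqrt{t}}(0)$ lie in $B_{\delta R}(x_{*}+\ws(e)t\,e)$; since the centers are at Euclidean distance $O(\sqrt{t})$, such $\gamma$ exists for each fixed $\epsilon$ and determines $\epsilon'$. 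Duhamel's formula then yields
\begin{equation*}
\bar p(t,z)\ge\int_{D_{s}^{+}}\nu(y)\bar p(s,y)\bar\Gamma(t,z,s,y)\,dy\ge |D_{s}^{+}|\cdot\frac{c_{0}}{s^{(n+1)/2}}\cdot\frac{c}{(t-s)^{n/2}}\ge\frac{1}{C_{\epsilon}t^{(n+1)/2}}.
\end{equation*}

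\noindent\textbf{Main obstacle.} The hardest technical point will be the construction and analysis of the exponential moment eigenfunction $\bar\eta_\alpha$ in the time-dependent setting: the eigenvalue problem \eqref{eta.eq} becomes non-autonomous because of $\beta(t)$, so the factorization $\bar\eta_\alpha(t,x)\sim e^{\alpha\cdot(x-\ws(e)t\,e)}\bar\eta_\alpha^{\mathrm{per}}(x)$ is only approximate and must be corrected by a time-dependent prefactor $e^{\int_{T_0}^{t}\lambda(\alpha)\beta(s)/(1-\beta(s))\,ds}$; verifying that this correction is $O(1)$ and preserves the two-sided exponential bounds \eqref{eta.bounds} uniformly in $t$ requires the estimate $\lambda(\alpha)=O(|\alpha|^{2})$ from \eqref{lambda} combined with $\alpha=O(1/\sqrt{t})$, so that the time-integral of the correction is $O(1/\sqrt{t}\cdot\int 1/s\,ds)=O(\log t/\sqrt{t})=o(1)$. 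Once this is pushed through, the remaining ingredients are essentially bookkeeping on top of the arguments already present in Section \ref{linearspace}.
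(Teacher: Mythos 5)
Your overall strategy -- treat $(1-\beta(t))$ as a perturbation, re-run the $L^1\to L^2\to L^\infty$ Nash argument and the exponential-moment machinery of Section \ref{linearspace}, then propagate the lower bound on $D_s^+$ to the target ball by the half-space heat kernel -- is exactly the strategy of the paper, and the final heat-kernel step is carried out the same way. However, two of your intermediate steps have genuine gaps. First, the control of the first moment: you propose to build a new backwards-adjoint function $\bar f$ solving $(1-\beta(t))\bar f_t+\nu^{-1}\nabla\cd(\nu\nabla\bar f)+\nu^{-1}F\cd\nabla\bar f=0$ and assert that the barriers from Lemma \ref{lemma.backwards} "carry over verbatim." They do not: the function $G(t,x)=(x\cd e'-\cs(e')t)+g(x)$ is no longer an exact solution of the perturbed equation but produces a residual $\beta(t)\cs(e')=O(1/t)$, and over the backward time horizon $[t,n]$ this accumulates to $O(\log(n/t))$, which is unbounded as $n\to\infty$; no $O(1)$ vertical shift of $G$ can then trap $\bar f^{(n)}$ uniformly in $n$, so the two-sided linear bounds on $\bar f$ -- which are what make the conserved integral comparable to $\int (x\cd e'-\cs(e')t)\bar p$ -- are not established. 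The paper avoids this entirely by keeping the \emph{original} $f$ of Lemma \ref{lemma.backwards} and differentiating $\bar I(t)=(1-\beta(t))\ih{t}\nu f\bar p$ directly; the resulting error terms $\beta(t)\ih{t}\nu\,\partial_tf\,\bar p$ are then bounded by splitting $\h{t}$ at distance $t^{1/4}$ from the boundary and using the whole-space Gaussian bound for $\bar P$ near the boundary and $|\partial_t f|\le Ct^{-1/4}f$ far from it, yielding $d\bar I/dt=O(t^{-5/4})(1+\bar I)$, which is integrable. This region-splitting idea is the actual content of Lemma \ref{int.bounds.pbar} and is absent from your proposal; since the linear-moment bound feeds into both your upper and lower bounds, the gap affects both.

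Second, for the exponential moments you correctly identify the non-autonomous eigenfunction $\bar\eta_\alpha$ as the main obstacle, but you leave it at a heuristic ($O(\log t/\sqrt t)$ correction) rather than resolving it. The paper again sidesteps the construction of a new eigenfunction: it keeps the $\eta_\alpha$ and $\zeta$ of Lemmas \ref{lemma.eta} and \ref{lemma.zeta} unchanged, so that differentiating $\bar I_\alpha$ and $\bar V_\alpha$ produces extra integrals involving $(\eta_\alpha)_t$ and $\zeta_t$, and these are absorbed as $O(1/t)$ relative errors via Lemma \ref{l1bound} ($|(\eta_\alpha)_t|\le C|\alpha|\eta_\alpha$, $|\zeta_t|\le C$). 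The term with $\zeta_t$ is delicate near the boundary, where $\zeta$ vanishes but $\zeta_t$ does not; the paper handles it with a compactness argument showing that a nonnegative $C^1$ function on $[0,1]$ with derivative controlled by its mass has first moment comparable to its mass. There is also a final point you omit: the ODE comparison for $\bar V_\alpha$ is only run up to $T=|\alpha|^{-2}$, using that $Mt^{-(n+2)/2}$ is a supersolution on that range, which suffices because only $|\alpha|=O(1/\sqrt t)$ is needed. In short, the architecture of your proof is the right one, but the two perturbation estimates you declare routine are precisely where the new work of this section lies, and as written neither is closed.
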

%--------------------------------------------------------------------
%
The proof of this result is similar to Proposition \ref{linearbounds}. Just as in that proof, we try to control the first moment of $\bar{p}$.

\begin{lemma} \label{int.bounds.pbar}
There exists $C>0$ such that 
\begin{equation*}
C^{-1}\le\ih{t}(x-\ws(e)t\,e)\cd e'\bar{p}(t,x)\,dx\le C.
\end{equation*}
\end{lemma}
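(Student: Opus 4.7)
The proof will mirror the derivation of the linear-moment bound \eqref{lin.mom.bound} in Proposition \ref{linearbounds}, adapted to the slightly perturbed equation \eqref{pbareq}. Pair $\bar{p}$ against the backward-adjoint function $f(t,x)$ from Lemma \ref{lemma.backwards}, whose linear growth bounds
\begin{equation*}
M^{-1}(x-\ws(e)t\,e)\cdot e' \le f(t,x)\le M(x-\ws(e)t\,e)\cdot e'
\end{equation*}
make
\begin{equation*}
\bar{I}(t):=\ih{t}\nu(x)f(t,x)\bar{p}(t,x)\,dx
\end{equation*}
comparable to the integral we wish to control. It thus suffices to bound $\bar{I}(t)$ above and below by positive constants.

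Differentiating $\bar{I}$, the moving boundary produces no Leibniz contribution since both $f$ and $\bar{p}$ vanish on $\partial\h{t}$. Substituting the backward equation satisfied by $f$ and equation \eqref{pbareq} for $\bar{p}$, integrating by parts twice, and using $\nabla\cdot F=0$ together with the vanishing boundary data, all the principal-order terms cancel just as in the proof of Lemma \ref{lemma.backwards}, leaving
\begin{equation*}
\bar{I}'(t) = -\frac{\beta(t)}{1-\beta(t)}\ih{t}\nu(x)f_t(t,x)\bar{p}(t,x)\,dx.
\end{equation*}
When $\beta\equiv 0$ this recovers the exact conservation $I'(t)=0$ used in Proposition \ref{linearbounds}, so the whole difficulty is in controlling the $O(1/t)$ correction.

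The lower bound is then immediate from monotonicity: since $\beta(t)>0$ (by the choice of $T$ in Section \ref{log.var}) and $f_t<0$ by Lemma \ref{lemma.backwards}, we get $\bar{I}'(t)\ge 0$. Hence $\bar{I}(t)\ge \bar{I}(0)$, and the strict positivity of $\bar{I}(0)$ follows from $p_0\ge 0,p_0\not\equiv 0$ combined with the linear lower bound for $f(0,\cdot)$ in the support of $p_0$.

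For the upper bound, note that $|f_t|$ is uniformly bounded: writing $f=G+z$ as in the proof of Lemma \ref{lemma.backwards} with $G_t=-\cs(e')$ and $z$ bounded, parabolic regularity on the bounded correction $z$ yields $|z_t|\le C$, hence $|f_t|\le C$. Combined with $|\beta(t)|\le C/t$ from \eqref{beta.asymp} this gives
\begin{equation*}
0\le \bar{I}'(t)\le \frac{C}{t}\,\|\nu\bar{p}(t)\|_{L^1(\h{t})}.
\end{equation*}
The main obstacle is then to sharpen the naive estimate $\|\nu\bar{p}(t)\|_1\le \|\nu p_0\|_1$ (which follows by integrating \eqref{pbareq} over $\h{t}$ and noting that the Dirichlet condition makes the boundary flux non-positive, so the $L^1$ mass is non-increasing). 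This naive bound would only yield $\bar{I}(t)=O(\log t)$. The required improvement to $\|\nu\bar{p}(t)\|_1\le C/\sqrt{t}$ is obtained by comparing $\bar{p}$ to the whole-space fundamental solution of the operator in \eqref{pbareq} — treating the $1-\beta(t)$ factor and the moving boundary by the Gaussian kernel bounds \eqref{heat.rn} of Norris, and exploiting the standard fact that the Dirichlet condition on a half-space forces an extra $1/\sqrt{t}$ factor in the surviving $L^1$-mass. With this improvement, $|\bar{I}'(t)|\le C t^{-3/2}$ is integrable on $[T_0,\infty)$, so $\bar{I}(t)\le \bar{I}(0)+C<\infty$ uniformly in $t$, completing the upper bound.
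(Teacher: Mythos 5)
Your overall framework is the same as the paper's: pair $\bar p$ against the backward-adjoint function $f$ of Lemma \ref{lemma.backwards}, so that $\bar I(t)$ is comparable to the linear moment, and control $\bar I'(t)$. Your derivative identity $\bar I'(t)=-\tfrac{\beta(t)}{1-\beta(t)}\ih{t}\nu f_t\bar p\,dx$ is correct (the paper carries an extra harmless $\beta'$ term because it puts a factor $1-\beta(t)$ in front of the integral), and your lower bound via $\beta>0$, $f_t<0$ is exactly the paper's argument.

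The gap is in the upper bound, at the step where you claim $\|\nu\bar p(t)\|_{L^1(\h{t})}\le C/\sqrt t$. The whole-space Norris bound \eqref{heat.rn} only gives $\bar p(t,\cdot+\ws(e)t\,e)\le Kt^{-n/2}e^{-|x|^2/Kt}$, whose $L^1$ norm over the half-space is $O(1)$, not $O(t^{-1/2})$; the ``standard'' extra factor $1/\sqrt t$ from the Dirichlet condition is precisely the half-space decay that this part of the paper is in the business of proving. The natural way to get it — the pointwise bound $\bar p(t,x+\ws(e)t\,e)\le C\,(x\cd e')\,t^{-(n/2+1)}$ — is established in the paper \emph{downstream} of the present lemma (the first-moment bound feeds the Nash iteration that produces the $L^1\to L^\infty$ estimate), so invoking it here would be circular. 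As you note, the honest a priori input ($L^1$ mass non-increasing) only yields $\bar I(t)=O(\log t)$. The paper closes the Gronwall loop differently, without any $L^1$ decay: it splits $\h{t}$ into the slab $R^1_t=\{0<(x-\ws(e)t\,e)\cd e'<t^{1/4}\}$ and its complement $R^2_t$. On $R^2_t$ one writes $|f_t|\le C\le Ct^{-1/4}(x-\ws(e)t\,e)\cd e'$, so that piece of the error is bounded by $Ct^{-1/4}\bar I(t)$ itself; on the thin slab $R^1_t$ the whole-space Gaussian bound does suffice, giving $O(t^{-1/4})$. This yields $\bar I'\le O(t^{-5/4})(1+\bar I)$, which integrates to a uniform bound. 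Your proof needs this (or an equivalent self-referential estimate) in place of the unproven $t^{-1/2}$ mass decay.
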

\begin{proof}
Define $\bar{I}(t)$ to be
\begin{equation*}
\bar{I}(t):=(1-\beta(t))\ih{t}\nu(x)f(t,x)\bar{p}(t,x)\,dx,
\end{equation*}
where $f$ is the function from Lemma \ref{lemma.backwards} with bounds of the form $(x-\ws(e)t\,e)\cd e'$. Its derivative will be
\begin{equation*}
\frac{d\bar{I}(t)}{dt}=-\beta'(t)\ih{t}\nu\bar{p}f\,dx-\beta(t)\ih{t}\nu\bar{p}\partial_tf\,dx =O\left(\frac{1}{t^2}\right)\bar{I}(t)-O\left(\frac{1}{t}\right)\ih{t}\nu\bar{p}\partial_tf\,dx.
\end{equation*}
To bound the second integral we divide the domain $\h{t}$ into two regions:
\begin{equation*}
R^1_t:=\{ x\cd e'\in(\cs(e)t,\cs(e)t+t^{1/4})\} \quad \text{and}  \quad R^2_t:=\{x\cd e'\in(\cs(e)t+t^{1/4},\infty)\}.
\end{equation*}
Using the bounds on $f$, as well as the fact that $|\partial_tf|\le C$ by parabolic regularity we obtain
\begin{equation*}
|\bar{I}_2(t)|=\left|\int\limits_{R^2_t}\nu\partial_tf\bar{p}\,dx\right| \le\frac{C}{t^{1/4}}\int\limits_{R^2_t}\nu(x-\ws(e)t\,e)\cd e'\bar{p}\,dx \le\frac{C}{t^{1/4}}\int\limits_{R^2_t}\nu f\bar{p}\,dx \le\frac{C}{t^{1/4}}\bar{I}(t).
\end{equation*}
On the other hand on $R^1_t$ we compare $\bar{p}$ to the solutions of the same equation in the whole space, i.e.
\begin{equation*}
\ds(1-\beta(t))\bar{P}_t=\frac{1}{\nu(x)}\nabla\cd(\nu(x)\nabla\bar{P})-\frac{F(x)}{\nu(x)}\cd\nabla\bar{P}, \quad t>0, \ x\in\rn.
\end{equation*}
Using a change of variables $t\to\tau$, with $d\tau=(1-\beta(t))dt$ we can see that for large $t$, the heat kernel for this equation has the same bounds as for the case $\beta=0$, which is given by Norris in Lemma \ref{lemma.Norris}. Hence, for large $t$,
\begin{equation*}
\bar{P}(t,x+\ws(e)t\,e)\le\frac{C}{t^{\frac{n}{2}}}\e{-\frac{|x|^2}{Kt}}.
\end{equation*}
Then using this in $R^1_t$ we obtain
\begin{equation*}
|\bar{I}_1(t)|=\left|\int\limits_{R^1_t}\nu\partial_tf\bar{p}\,dx\right| \le C\int\limits_{R^1_t}\frac{C}{t^{\frac{n}{2}}}\e{-\frac{|x-\ws(e)t\,e|^2}{Kt}}\,dx \le\frac{C}{t^{\frac{1}{2}}}\int\limits_{t^{1/4}}^\infty\e{-\frac{x_1^2}{Kt}}\,dx_1 =C\int\limits_{\frac{1}{t^{1/4}}}^\infty\e{-\frac{y_1^2}{K}}\,dy_1 \le\frac{C}{t^{1/4}}.
\end{equation*}
Combining these,
\begin{equation*}
\frac{d\bar{I}(t)}{dt} =O\left(\frac{1}{t^2}\right)\bar{I}(t)+O\left(\frac{1}{t}\right)(\bar{I}_1(t)+\bar{I}_2(t)) \le O\left(\frac{1}{t^2}\right)\bar{I}(t)+O\left(\frac{1}{t^\frac{5}{4}}\right)+O\left(\frac{1}{t^\frac{5}{4}}\right)\bar{I}(t),
\end{equation*}
which implies $\bar{I}(t)\le C+O\left(\frac{1}{t^\frac{1}{4}}\right)$, i.e. $\bar{I}(t)\le C$ for all $t$. 

The lower bound on the other hand is easier, since $\partial_tf<0$ from \ref{lemma.backwards}, so $\bar{I}'(t)\ge O\left(\frac{1}{t^2}\bar{I}(t)\right)$, which implies $\bar{I}(t)\ge C\bar{I}(0)>0$ for all $t$.
\end{proof}

To obtain a decay of order $O(\frac{1}{t^{\frac{n+1}{2}}})$ we need to control the exponential moments of $\bar{p}$. Just as in the proof of Proposition \eqref{linearbounds} define 
\begin{equation*}
\bar{I}_\alpha(t):=(1-\beta(t))\ih{t}\nu(x)\eta_{\alpha}(t,x)\zeta(t,x)\bar{q}(t,x)\,dx, \quad \bar{V}_\alpha(t):=(1-\beta(t))\ih{t}\nu(x)\eta_{2\alpha}(t,x)\zeta(t,x)\bar{q}^2(t,x)\,dx,
\end{equation*}
\begin{equation*}
\bar{D}_\alpha(t):=\ih{t}\nu(x)\eta_{2\alpha}(t,x)\zeta(t,x)|\nabla \bar{q}(t,x)|^2\,dx,
\end{equation*}
where $\zeta(t,x)$ is the linearly growing function in Lemma \ref{lemma.zeta} and $\bar{q}(t,x)=\frac{\bar{p}(t,x)}{\zeta(t,x)}$. Then
\begin{equation*}
\frac{d\bar{I}_\alpha(t)}{dt} =(\lambda(\alpha)-\beta'(t))\bar{I}_\alpha(t) -\beta(t)\ih{t}\nu(\eta_\alpha)_t\zeta \bar{q}\,dx,
\end{equation*}
\begin{equation*}
\ds\frac{d\bar{V}_\alpha(t)}{dt} =(\lambda(2\alpha)-\beta'(t))\bar{V}_\alpha(t)-2\bar{D}_\alpha(t) +\beta(t)\ih{t}\nu\eta_{2\alpha}\zeta_t\bar{q}^2\,dx -\beta(t)\ih{t}\nu(\eta_{2\alpha})_t\zeta \bar{q}^2\,dx.
\end{equation*}
In order to proceed as in the case of $p$ we need to control the additional integrals in both derivatives, which amounts to controlling $(\eta_\alpha)_t$ and $\zeta_t$. This is proved in the following lemma:

\begin{lemma}\label{l1bound}
There exists $C>0$ such that for all $\alpha$ satisfying the conditions in \eqref{alpha.cond} and $x\in\h{t}$\\
\textbf{(i)} $|\zeta_t(t,x)|\le C$,\\
\textbf{(ii)} $|\bar{\eta}_\alpha(t,x)|\le C|\alpha|\eta_\alpha(t,x)$.
\end{lemma}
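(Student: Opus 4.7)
Both bounds will follow from revisiting the constructions of $\zeta$ in Lemma~\ref{lemma.zeta} and $\eta_\alpha$ in Lemma~\ref{lemma.eta}. Each of these functions was obtained as a limit of approximating solutions that stay close to an explicit ansatz whose time dependence is transparent, and that same ansatz will provide the needed control on the time derivative.

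For (i), I plan to write $\zeta = Z + w$ with $Z(t,x) := (x-\ws(e)t\,e)\cd e' + z(x)$ the affine-periodic solution used in the proof of Lemma~\ref{lemma.zeta}. That proof in fact sandwiches the approximants $\zeta^{(n)}$ between $Z$ and $Z + C_0$ (mirroring the bound $G \le f^{(n)} \le G+\alpha$ from Lemma~\ref{lemma.backwards}), so $|w| \le C_0$ uniformly on $\h{t}$. Consequently $w$ solves the same linear parabolic equation as $\zeta$, is globally bounded, and carries bounded periodic Dirichlet data $-z$ on $\partial\h{t}$. After flattening this straight moving boundary by the change $y := x - \cs(e')t\,e'$, standard interior and boundary parabolic Schauder estimates will give $|w_t| \le C$ uniformly. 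Since $Z_t = -\ws(e)(e\cd e')$ is constant, $|\zeta_t| \le C$ follows immediately.

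For (ii), the time dependence of $\eta_\alpha$ is inherited from the exponential factors $\e{-\ws(e)(\alpha\cd e)t}$ and $\e{-\ws(e)(\beta\cd e)t}$ appearing in the barriers $H_u, H_l$ used to construct $\eta_\alpha$. Using $\beta = \alpha - 2\tfrac{\alpha\cd\Lambda e'}{e'\cd\Lambda e'}e' + O(|\alpha|^2)e'$ from Lemma~\ref{beta}, direct differentiation yields $|\partial_t H_{u,l}| \le C|\alpha|\,H_{u,l}$ on $\h{t}$. Differentiating equation~\eqref{approx.eta} in $t$ shows that $(\eta_{\alpha,T})_t$ solves the same linear parabolic PDE as $\eta_{\alpha,T}$ with eigenvalue $\lambda(\alpha)$, and its terminal datum, obtained from $\partial_t \max\{0, H_l(T,\cd)\}$, is pointwise dominated by $C|\alpha|\,H_u(T,\cd)$. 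I will then apply the maximum principle with $\pm C|\alpha| H_u$ as barriers (these dominate $\eta_{\alpha,T}$ by construction and have the correct sign on $\partial\h{t}$) to propagate the bound backwards to all $t \le T$. Finally, passing to the limit $T_k \to \infty$ along the subsequence from Lemma~\ref{lemma.eta}'s proof preserves the inequality and yields $|(\eta_\alpha)_t| \le C|\alpha|\eta_\alpha$.

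The hard part will be carrying out the maximum-principle step in (ii) in the boundary layer near $\partial\h{t}$, where $\eta_\alpha$ vanishes linearly but $(\eta_\alpha)_t$ need not vanish pointwise because the boundary itself is in motion. The two-term structure of $H_u$ and $H_l$ is crucial here: these functions were engineered in Lemma~\ref{lemma.eta} precisely so that the leading exponentials cancel on $\partial\h{t}$, and that cancellation is preserved under $\partial_t$ up to an $O(|\alpha|)$ relative correction, which is exactly the margin demanded by (ii). Once this boundary matching is in place, the interior propagation by the maximum principle is routine.
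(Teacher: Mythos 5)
Your part (i) is correct and is essentially the fleshed-out version of the paper's one-line appeal to parabolic regularity: writing $\zeta=Z+w$ with $Z(t,x)=(x-\ws(e)t\,e)\cd e'+z(x)$ and $w$ globally bounded (a subtraction that is genuinely needed, since $\zeta$ itself grows linearly), then applying up-to-the-boundary parabolic estimates in the straightened frame, is exactly what is meant there. The skeleton of your part (ii) — differentiate the approximating problem in $t$, note that $(\eta_{\alpha,T})_t$ solves the same equation, control it through the explicit barriers $H_l,H_u$, propagate backwards by the maximum principle, and pass to the limit $T\to\infty$ — is also the paper's route.

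The gap is the quantitative claim you lean on in the boundary layer. Writing $H_l=A+B$ for the two exponential terms, one has $\partial_tH_l=-\ws(e)\bigl[(\alpha\cd e)H_l+((\beta-\alpha)\cd e)B\bigr]$; since $(\beta-\alpha)\cd e=O(|\alpha|)$ while $|B|\asymp\e{\beta\cd(x-\ws(e)t\,e)}/|\alpha|$, the second term is $O(\e{\alpha\cd(x-\ws(e)t\,e)})$ and is \emph{not} $O(|\alpha|\,H_l)$ where $H_l$ vanishes. So the bound $|\partial_tH_{u,l}|\le C|\alpha|H_{u,l}$ fails near $\partial\h{t}$: the cancellation of the two exponentials is \emph{not} preserved under $\partial_t$ up to an $O(|\alpha|)$ relative error, contrary to what your last paragraph asserts. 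The same obstruction defeats the barrier $\pm C|\alpha|H_u$ on the lateral part of the parabolic boundary, where the comparison must also hold: there $(\eta_{\alpha,T})_t=-\cs(e')\,\nabla\eta_{\alpha,T}\cd e'$, which by the gradient bounds established in the construction of $\eta_{\alpha,T}$ is comparable to $\e{\alpha\cd(x-\ws(e)t\,e)}$, whereas $|\alpha|H_u$ vanishes at the points of $\partial\h{t}$ where the minimum defining $C_u(\alpha)$ is attained and is only $O(|\alpha|)$ relative to the exponential weight elsewhere. The comparison has to be run instead against the single global positive solution $\e{\alpha\cd(x-\ws(e)t\,e)}\bar{\eta}_\alpha(x)$ of the same equation, i.e.\ one can only record $(\eta_{\alpha,T})_t=O(\e{\alpha\cd(x-\ws(e)t\,e)})$ — which is what the paper does — and the conversion to $C|\alpha|\eta_\alpha$ via \eqref{eta.bounds} is a separate final step. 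A smaller omission: the terminal datum of $(\eta_{\alpha,T})_t$ involves second derivatives of $\max\{0,H_l(T,\cdot)\}$ and hence a surface measure carried by the zero set of $H_l$; the paper disposes of this by first evolving from $T$ to $T-1$ before running the comparison, a regularization your ``pointwise dominated'' claim skips over.
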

\begin{proof}
\textbf{(i)} This follows from parabolic regularity.\\
\textbf{(ii)} We will prove this initially for $\eta_{\alpha,T}$, then let $T\to\infty$. Note first that $(\eta_{\alpha,T})_t$ solves the same equation as $\eta_{\alpha,T}$. Taking into account the comparability conditions for $\alpha$, at the terminal condition
\begin{equation*}
(\eta_{\alpha,T})_t(T,x)=O(\e{\alpha\cd(x-\ws(e)t\,e)})+\e{\alpha\cd(x-\ws(e)t\,e)}d\mu_\alpha(x),
\end{equation*}
where $\mu_\alpha(x)$ is the measure coming from the set of zeros of $h_l$, of mass uniformly bounded in $\alpha$. Then at $T-1$,
\begin{equation*}
(\eta_{\alpha,T})_t(T-1,x) =O(\e{\alpha\cd(x-\ws(e)t\,e)})+\e{\alpha\cd(x-\ws(e)t\,e)}O(1) =O(\e{\alpha\cd(x-\ws(e)t\,e)}),
\end{equation*}
which in turn implies that for $t\le T-1$,
\begin{equation*}
|(\eta_{\alpha,T})_t(t,x)| =O(\e{\alpha\cd(x-\ws(e)t\,e)}) \le C\e{\alpha\cd(x-\ws(e)t\,e)}\bar{\eta}_\alpha(x).
\end{equation*}
Since this is true for all $T$, letting $T\to\infty$ 
\begin{equation*}
|(\eta_{\alpha})_t(t,x)|\le C\e{\alpha\cd(x-\ws(e)t\,e)}\bar{\eta}_\alpha(x) \le C|\alpha|\eta_{\alpha}(t,x).
\end{equation*}
\end{proof}
Using the bounds on $\beta$ and part (ii) of this lemma we bound the terms with $(\eta_{\alpha})_t$ in both equations above as follows 
\begin{equation*}
\left|\beta(t)\ih{t}\nu(\eta_{\alpha})_t\zeta \bar{q}\,dx\right|\le C|\beta(t)|\ih{t}\nu\eta_{\alpha}\zeta \bar{q}\,dx \le \frac{C}{t}\bar{I}_\alpha(t),
\end{equation*}
\begin{equation*}
\left|\beta(t)\ih{t}\nu(\eta_{2\alpha})_t\zeta \bar{q}^2\,dx\right|\le C|\beta(t)|\ih{t}\nu\eta_{2\alpha}\zeta \bar{q}^2\,dx \le \frac{C}{t}\bar{V}_\alpha(t).
\end{equation*}
Finally for the term involving $\zeta_t$ we use part (i) of the lemma, together with the following observations:\\
\textbf{(I)} In the region $(x-\ws(e)t\,e)\cd e'\ge 1$, $|\zeta_t(t,x)|\le C\le C(x-\ws(e)t\,e)\cd e'\le C\zeta(t,x)$.\\
\textbf{(II)} In the region $0\le(x-\ws(e)t\,e)\cd e'\le 1$ we note the following fact: for every $M>0$ there exists $C(M)>0$ such that for all functions $f\in C^1([0,1])$, $f\ge 0$,
\begin{equation*}
|f'(y)|\le M\int\limits_{0}^1f(z)\,dz, \forall y\in[0,1] \quad \implies \quad \int\limits_{0}^1f(y)\,dy \le C(M)\int\limits_{0}^1yf(y)\,dy.
\end{equation*}
This is true because if not, there would exist a sequence of functions $f_n$ with mass $1$, uniformly bounded derivatives, and first moments going to $0$, which by Arzela-Ascoli would lead to a subsequence having a continuous limiting function of mass $1$ and first moment $0$, a contradiction. We now use this fact on  $\ih{t}\nu\eta_{2\alpha}\zeta_t\bar{q}^2\,dx$ in $0\le(x-\ws(e)t\,e)\cd e'\le 1$ in the variable along $(x-\ws(e)t\,e)\cd e'$ to get
\begin{equation*}
\begin{array}{c}
\ds\left|\int\limits_{\stackrel{(x-\ws(e)t\,e)\cd e'}{\in(0,1)}} \nu\eta_{2\alpha}\zeta_t\bar{q}^2\,dx\right| \le C\int\limits_{\stackrel{(x-\ws(e)t\,e)\cd e'}{\in(0,1)}} \nu\eta_{2\alpha}\bar{q}^2\,dx \le C\int\limits_{\stackrel{(x-\ws(e)t\,e)\cd e'}{\in(0,1)}} \nu\eta_{2\alpha}(x-\ws(e)t\,e)\cd e'\bar{q}^2\,dx\\
\ds \le C\int\limits_{\stackrel{(x-\ws(e)t\,e)\cd e'}{\in(0,1)}} \nu\eta_{2\alpha}\zeta\bar{q}^2\,dx.
\end{array}
\end{equation*}

Combining \textbf{(I)} and \textbf{(II)} we get that 
\begin{equation*}
\left|\beta(t)\ih{t}\nu\eta_{2\alpha}\zeta_t\bar{q}^2\,dx\right|\le\frac{C}{t}\bar{V}_\alpha(t).
\end{equation*}
Similarly as in the proof of Proposition \ref{linearbounds} we have 
\begin{equation*}
\bar{I}_\alpha' =\left(\lambda(\alpha)+O\left(\frac{1}{t}\right)\right)\bar{I}_\alpha,
\end{equation*}
and invoking the Nash type inequality \eqref{Nash.n+2} together with Lemma \ref{int.bounds.pbar},
\begin{equation*}
\begin{array}{c}
\ds\bar{V}_\alpha'=\left(\lambda(2\alpha)+O\left(\frac{1}{t}\right)\right)\bar{V}_\alpha-2\bar{D}_\alpha \le \left(\lambda(2\alpha)+O\left(\frac{1}{t}\right)\right)\bar{V}_\alpha-C\bar{V}_\alpha^{\frac{n+4}{n+2}}\bar{I}_\alpha^{-\frac{4}{n+2}}\\ \ds=\left(\lambda(2\alpha)+O\left(\frac{1}{t}\right)\right)\bar{V}_\alpha-C\bar{V}_\alpha^{\frac{n+4}{n+2}}\bar{I}_\alpha(0)^{-\frac{4}{n+2}}\e{\sigma(\alpha)t}.
\end{array}
\end{equation*}
%
%Define $J_\alpha(t)=t^{-M}\bar{I}_\alpha(t)$ and $W_\alpha(t)=t^{-2M}\bar{V}_\alpha(t)$, with $M=\max\{M_1,M_2/2 \}$. The new functions satisfy
%\begin{equation*}
%J_\alpha'\le\lambda(\alpha)J_\alpha, \qquad W_\alpha'\le \lambda(\alpha)W_\alpha-CW_\alpha^{\frac{n+4}{n+2}}J_\alpha^{-\frac{4}{n+2}}
%\end{equation*}
%so following the proof of the same proposition
Fix $T=|\alpha|^{-2}$. Note that we only need the integral inequality for $|\alpha|=O\left(\frac{1}{\sqrt{t}} \right)$, so it is enough to prove the inequality up to $t\le T$. But note that for $M>0$ large enough, $Mt^{-\frac{n+2}{2}}$ is a super-solution for $t\le T$, so $\bar{V}_\alpha(t)\le Mt^{-\frac{n+2}{2}}$ for all $t\le T$. Using this, the rest of the proof follows exactly the lines of Proposition \ref{linearbounds}.

\subsection{An approximate solution}\label{sec.approx}

\noindent The second ingredient in the proof of the main result is the construction of an approximate solution $\bar{p}^{app}$ on domains of diameter $\sqrt{t}$ around locations of the form $(\ws(e)t+x_0+\epsilon\sqrt{t})e$.To do this, we will need the following function $q$ whose behavior is more or less explicit, and which solves an approximate version of the equation.

%--------------------------------------------------------------------
\begin{prop}\label{qfunction}
Let  $\chi_i(x;e')$ be the functions satisfying \eqref{chi} and $\chi_0\in\R$. There exists a function $q(t,x)$ such that for any $\sigma>0$,
\begin{equation*}
(1-\beta(t))q_t=\Delta q+\kappa(x)\cd\nabla q +O\left(\frac{1}{t^{\frac{n+5}{2}}} \right)
\end{equation*} 
%for $x\in B_{\sigma\sqrt{t}}(\ws(e)t\,e)$ satisfying $x\cd e'\in (\cs(e')t,\cs(e')t+\sigma\sqrt{t})$ and $t>1$, and such that within this interval,
for $x=y+(\ws(e)t+r)e$ satisfying $r\in (0,\sigma\sqrt{t})$, $y\cd e'=0, y\in B^\perp_{\sigma\sqrt{t}}(0)$ and $t>1$, and such that within this interval,
\begin{equation} \label{qbounds}
\begin{array}{c}
\ds\left| q(t,x)-\frac{(x-\ws(e)t\,e-\chi(x)/\ls(e')-\chi_0)\cd e'}{t^{\frac{n}{2}+1}} \e{-\frac{(x-\ws(e)t\,e)\cd S^{-1}(x-\ws(e)t\,e)}{4t}} \right|\\ \ds\le\frac{C}{t^{\frac{n}{2}+1}}\left(\frac{x\cd e'-\cs(e')t}{\sqrt{t}} \right)^2+ O\left(\frac{1}{t^{\frac{n+3}{2}}}\right)
\end{array}
\end{equation}
for some $C>0$.
\end{prop}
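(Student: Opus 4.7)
The plan is to construct $q$ by a two-scale asymptotic ansatz matching the expected Gaussian heat-kernel shape against the periodic correctors from \eqref{chi}. Writing $y=x-\ws(e)t\,e$ and $\mathcal{G}(t,y)=\e{-y\cd S^{-1}y/(4t)}$, I look for
\begin{equation*}
q(t,x)=\frac{\mathcal{G}(t,y)}{t^{n/2+1}}\Bigl[A_0(x,y)+t^{-1/2}A_1(x,y)+t^{-1}A_2(x,y)\Bigr],
\end{equation*}
with the leading profile $A_0(x,y)=\bigl(y-\chi(x)/\ls(e')-\chi_0\,e'\bigr)\cd e'$ dictated by \eqref{qbounds}, and higher correctors $A_1,A_2$ that are polynomial in $y$ and periodic in $x$. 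The effective diffusion matrix $S$ and the scalar $\chi_0$ are also to be determined.

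The computation proceeds by substituting this ansatz into $\Delta q+\kappa(x)\cd\nabla q-(1-\beta(t))q_t$ and expanding in powers of $t^{-1/2}$, treating $|y|/\sqrt t$ as an $O(1)$ quantity on the region of interest. The dominant contribution from $A_0$ alone is $\mathcal{G}\bigl[\Delta_x A_0+\kappa\cd\nabla_x A_0-(1-\beta)\partial_t A_0\bigr]/t^{n/2+1}$; since $\partial_t A_0=-\cs(e')$ and $\nabla_x A_0\cd e'=1-\nabla(\chi\cd e')\cd e'/\ls(e')$, the algebraic identity
\begin{equation*}
\Delta(\chi\cd e')+\kappa\cd\nabla(\chi\cd e')=\ls(e')\bigl(\kappa\cd e'+\cs(e')\bigr),
\end{equation*}
obtained by dotting \eqref{chi} with $e'$, makes this term collapse to $-\beta(t)\cs(e')\mathcal{G}/t^{n/2+1}=O\bigl(\mathcal{G}/t^{(n+4)/2}\bigr)$. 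At the next order, the cross-terms $\nabla_x A_0\cd\nabla_y\mathcal{G}$, $A_0\,\kappa\cd\nabla_y\mathcal{G}$ and $A_0\Delta_y\mathcal{G}$ generate $\mathcal{G}$-times-polynomial-in-$y$ sources with $x$-periodic coefficients. I split each such source into its $\nu$-mean and its mean-zero part: the mean-zero part is absorbed by choosing $A_1$ to solve a periodic cell problem of the form $L(e')\Xi=(\text{source})$, while the mean part pins down the effective diffusion matrix $S$ as the homogenized diffusion for $L(e')$ against the invariant density $\nu(x)\,dx$. Positive-definiteness of $S$ is guaranteed by the non-degeneracy of $\nabla\chi_j+e_j$, exactly as in the argument for $\Lambda$ in Section \ref{uniqueness}. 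The constant $\chi_0$ together with the $y$-independent mode of $A_2$ is then tuned to kill the surviving $\beta(t)\cs(e')$-type contribution.

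After all cancellations the residual is bounded by $C\mathcal{G}(1+|y/\sqrt t|^K)/t^{(n+5)/2}$ for some $K$, uniformly on the region $r\in(0,\sigma\sqrt t)$, $y\cd e'=0$, $|y|<\sigma\sqrt t$, giving the desired equation estimate. The bound \eqref{qbounds} then follows directly from the ansatz: the correctors $t^{-1/2}A_1+t^{-1}A_2$ contribute to $q$ a pointwise amplitude at most $C\mathcal{G}\bigl(r^2/t+1\bigr)/t^{(n+3)/2}$, matching the two error terms in \eqref{qbounds}. The main obstacle is the algebraic bookkeeping in the second-order matching---verifying that the solvability condition for $A_1$ in each cell problem is compatible with the choice of $S$, and that the single scalar $\chi_0$ suffices to absorb all residual constant-in-$y$ contributions---which ultimately reduces to standard Fredholm-type arguments using the adjoint operator $L^*(e')$ and the properties of $\nu$ and $\chi$ already established.
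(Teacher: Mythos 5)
Your construction is essentially the paper's proof: the same two\-/scale expansion around the self-similar profile $\frac{(x-\ws(e)t\,e)\cd e'}{t^{n/2+1}}\,\e{-(x-\ws(e)t\,e)\cd S^{-1}(x-\ws(e)t\,e)/(4t)}$ with periodic correctors obtained from cell problems solved by the Fredholm alternative against $\nu$ (the kernel of $L^*(e')$), the same identity $L(e')(\chi\cd e')=\ls(e')(\kappa\cd e'+\cs(e'))$ driving the leading cancellation, the same homogenized matrix $S_{ij}=\int\nu\,(e_i-\nabla\chi_i/\ls(e'))\cd(e_j-\nabla\chi_j/\ls(e'))$ with the same positivity argument, and the same truncation depth yielding the $O(t^{-(n+5)/2})$ residual. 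Two bookkeeping points where the paper differs from your sketch: the term $A_0\,(\kappa+\ws(e)e)\cd\nabla_y\e{-y\cd S^{-1}y/(4t)}$ is of the \emph{same} order as $\Delta_xA_0$ (not one order lower), so it must be cancelled already at the first matching step by a quadratic-in-$y$ corrector; and $\chi_0$ is not tuned here --- it drops out of every solvability condition and is left free (it is fixed only later, in the proof of Proposition \ref{p.bar.est}, to control the sign of $q$ on the boundary), the surviving $\beta(t)\cs(e')$ contribution being absorbed instead into the source of the third corrector $V^1(z)$.
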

%--------------------------------------------------------------------
%
\noindent$\bar{p}^{app}$ will now be a solution of the exact equation on a bounded moving domain that is controlled by $q$ and the bounds from the previous proposition.
%
%--------------------------------------------------------------------
\begin{prop}\label{approxsol}
Let $\sigma\in (0,1)$ be fixed, $q(t,x)$ the function in Proposition \ref{qfunction} and $C_t$ the oblique cylinder
$$C_t:=\left\{x=y+(\ws(e)t+r)e: \ %r\in\left(0,\frac{\sigma}{e\cd e'}\sqrt{t}\right), \ |y|\le\sigma\sqrt{t}, \ y\cd e'=0\right\}.$$
r\in\left(0,\sigma\sqrt{t}\right), \ |y|<\sigma\sqrt{t}, \ y\cd e'=0\right\}.$$
Let $\bar{p}^{app}(t,x)$ solve 
\eal{approxsoleq}{ll}{
\ds(1-\beta(t))\partial_t\bar{p}^{app}=\Delta\bar{p}^{app}+\kappa(x)\cd\nabla\bar{p}^{app}, \quad & x\in C_t, \ t>1, \\
\ds \bar{p}^{app}(t,x)=q(t,x), & x\in\partial C_t, \ t>1,\\
\ds \bar{p}^{app}(1,x)=q(1,x), & x\in C_1.
}
%Then there exists $t_0>1$ such that for all $t\ge t_0$ and $x\in C_t$,
Then for all $t\ge 1$ and $x\in C_t$,
\begin{equation*}
|\bar{p}^{app}(t,x)-q(t,x)|\le\frac{C\sigma^{\frac{1}{n+1}}}{t^{\frac{n+2}{2}}}.
\end{equation*}
\end{prop}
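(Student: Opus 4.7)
Set $d(t,x) := \bar{p}^{app}(t,x) - q(t,x)$. Combining \eqref{approxsoleq} with the approximate equation of Proposition \ref{qfunction}, $d$ satisfies
\begin{equation*}
(1-\beta(t))\, d_t = \Delta d + \kappa(x)\cdot\nabla d + r(t,x), \quad x \in C_t,\ t > 1,
\end{equation*}
with $|r(t,x)| \le C/t^{(n+5)/2}$, boundary condition $d(t,\cdot) = 0$ on $\partial C_t$, and initial condition $d(1,\cdot) \equiv 0$. The plan is to apply the parabolic maximum principle on the moving cylinder: construct a non-negative super-solution $U(t,x)$ satisfying $(1-\beta(t))U_t - \Delta U - \kappa\cdot\nabla U \ge |r|$ with $U \ge 0$ on the parabolic boundary, and conclude $|d|\le U$ from comparing $U \pm d$.

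A first crude super-solution depending only on time is $U_0(t) := C\int_1^t (1-\beta(s))^{-1}s^{-(n+5)/2}\,ds$, which yields $|d(t,x)| \le C'/t^{(n+3)/2}$ and already implies the claim whenever $t \ge \sigma^{-2/(n+1)}$, since in that range $C'/t^{(n+3)/2} \le C\sigma^{1/(n+1)}/t^{(n+2)/2}$. The real content of the proposition therefore lies in the small-time range $1 \le t \le \sigma^{-2/(n+1)}$, when the cylinder $C_t$ has diameter at most $\sigma^{n/(n+1)} \le 1$. In that regime I would use a barrier that exploits the Dirichlet boundary condition and the smallness of the domain. A natural ansatz is
\begin{equation*}
U(t,x) = \frac{M\sigma^{1/(n+1)}}{t^{(n+2)/2}}\, \Psi\!\left(\frac{x-\ws(e)\,t\,e}{\sigma\sqrt{t}}\right),
\end{equation*}
where $\Psi$ is a smooth non-negative profile on the unit cylinder vanishing on its lateral boundary, adapted to the operator $\Delta + \kappa\cdot\nabla$. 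A direct computation reduces the super-solution inequality to the dominant spatial term $M\sigma^{1/(n+1)-2}\Delta\Psi/t^{(n+4)/2}$ absorbing both the source and the drift cross-terms; balancing the $M\sigma^{a-2}$ coefficient against the target $L^\infty$ bound $M\sigma^{a}/t^{(n+2)/2}$ across the admissible range of $t$ forces the exponent $a=1/(n+1)$.

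The main obstacle will be controlling the drift contributions in the super-solution inequality. The periodic advection $\kappa$ has order $1$ and does not rescale with the cylinder, and the moving-frame change generates cross terms involving $\ws(e)e\cdot\nabla\Psi$ and the time derivative of the rescaling factor; these contributions scale like $1/t^{(n+3)/2}$, i.e.\ larger than the target source scale $1/t^{(n+5)/2}$, and have indefinite sign. To absorb them I would rewrite the equation in the self-adjoint form given by Lemma \ref{nu.existance} and take $\Psi$ to be a torsion-type function for the associated weighted divergence operator, where integration-by-parts kills the drift to leading order. An alternative route, parallel to Section \ref{linearspace}, is to combine a Duhamel/heat-kernel representation with a Nash-type interpolation between an $L^1$ control of $d$ coming from the total mass of $r$ against $C_t$ and the crude $L^\infty$ estimate above; the exponent $1/(n+1)$ then emerges from the Sobolev exponent on the $n$-dimensional cylinder of volume $\sigma^n t^{n/2}$.
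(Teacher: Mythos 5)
Your reduction has a genuine gap at its first quantitative step. The time-only barrier $U_0(t)=C\int_1^t(1-\beta(s))^{-1}s^{-(n+5)/2}\,ds$ is monotone increasing and converges to a finite constant as $t\to\infty$; the comparison principle therefore gives only $|d(t,x)|\le U_0(t)\le C''$ uniformly in $t$, \emph{not} $|d(t,x)|\le C'/t^{(n+3)/2}$. A barrier depending only on time can never produce decay in $t$ here, because it must dominate $|d|$ at all earlier times and the source is integrated from $t=1$ onward. Consequently the claim that the proposition is "already implied" for $t\ge\sigma^{-2/(n+1)}$ is unjustified, and the large-time regime — which is the substance of the statement — is not handled. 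The remainder of your argument is an honest but unexecuted sketch: you correctly identify that the periodic drift $\kappa$ does not rescale with the shrinking cylinder and generates indefinite-sign terms of order $t^{-(n+3)/2}$ in the super-solution inequality, but neither the torsion-function barrier nor the Duhamel/Nash route is carried out, so the exponent $\sigma^{1/(n+1)}$ is never actually produced.

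For comparison, the paper does not construct a barrier at all. It writes the equation for $\phi=\bar{p}^{app}-q$ in the self-adjoint form of Lemma \ref{nu.existance} and applies the parabolic Aleksandrov--Bakelman--Pucci (Krylov) maximum principle on the parabolic cylinder $D_t=\{(s,x):\, s\in(1,t),\ x\in C_s\}$,
\begin{equation*}
\sup_{D_t}|\phi|\ \le\ \sup_{\partial' D_t}|\phi|\ +\ C\,\bigl(\operatorname{diam}(C_t)\bigr)^{\frac{n}{n+1}}\,\Bigl\| O\bigl(t^{-\frac{n+5}{2}}\bigr)\Bigr\|_{L^{n+1}(D_t)},
\end{equation*}
together with $\phi=0$ on the parabolic boundary. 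The $L^{n+1}$ norm of the source over a domain of spatial cross-section $O\bigl((\sigma\sqrt{t})^{\,n}\bigr)$, combined with the diameter prefactor, is exactly where the exponent $\tfrac{1}{n+1}$ on $\sigma$ comes from — this is the mechanism you gesture at in your final sentence about "the Sobolev exponent on the $n$-dimensional cylinder," but you would need to invoke the ABP estimate explicitly (and localize it in time to extract the $t^{-\frac{n+2}{2}}$ decay) rather than attempt a pointwise barrier, whose drift obstructions you have identified but not overcome.
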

%--------------------------------------------------------------------

\subsubsection{Proof of Proposition \ref{qfunction}}
The function will be constructed using a multiple-scale expansion in terms of the form $\frac{1}{t^{k/2}}$, so we start by making the following ansatz:
\begin{equation*} %ansatz for q
q(t,x)=A(t)v\left(t,x,\frac{x-\ws(e)t\,e}{R(t)}\right),
\end{equation*}
where $R(t)=\sqrt{t}$, $v$ is periodic in $x$, and $v(t,x,z)=0$ for $z\cd e'=0$. If $q$ was an exact solution of \eqref{pbareq}, i.e.
\begin{equation*}
(1-\beta(t))q_t=\Delta q+\kappa(x)\cd\nabla q,
\end{equation*}
using that $R'=\frac{1}{2R}$ we would have
\begin{equation*}
(1-\beta(t))\left(\frac{A'}{A}v+v_t-\frac{\ws(e)e}{R}\cd\nabla_zv-\frac{z}{2R^2}\cd\nabla_zv \right) =\Delta_xv +\frac{2}{R}\nabla_x\cd\nabla_zv +\frac{1}{R^2}\Delta_zv +\kappa(x)\cd\nabla_xv +\frac{\kappa(x)}{R}\cd\nabla_zv.
\end{equation*}
Furthermore we write $v$ using the expansion
\begin{equation*}
v(t,x,z):=v^0(z)+\frac{1}{R(t)}v^1(x,z)+\frac{1}{R(t)^2}v^2(x,z)+\frac{1}{R(t)^3}v^3(x,z)
\end{equation*}
with $v^1, v^2, v^3$ periodic in $x$, uniformly bounded in $z$ on compact sets. Then the equation above becomes
\begin{equation}\label{expansion} %the long multiscale expansion
\begin{array}{l}
\ds (1-\beta)\frac{A'}{A} \left(v^0+\frac{1}{R}v^1+\frac{1}{R^2}v^2+\frac{1}{R^3}v^3\right) - (1-\beta)\frac{1}{2R^3}\left(v^1+\frac{2}{R}v^2+\frac{3}{R^2}v^3\right) \\
\ds -(1-\beta)\frac{\ws(e)e}{R}\cd \left(\nabla_zv^0+\frac{1}{R}\nabla_zv^1+\frac{1}{R^2}\nabla_zv^2+\frac{1}{R^3}\nabla_zv^3\right) \\
\ds -(1-\beta)\frac{z}{2R^2}\cd \left(\nabla_zv^0+\frac{1}{R}\nabla_zv^1+\frac{1}{R^2}\nabla_zv^2+\frac{1}{R^3}\nabla_zv^3\right)\\
\ds \quad\quad =\frac{1}{R}\left(\Delta_xv^1+\frac{1}{R}\Delta_xv^2+\frac{1}{R^2}\Delta_xv^3\right)
+\frac{2}{R^2}\left(\nabla_x\cd\nabla_zv^1+\frac{1}{R}\nabla_x\cd\nabla_zv^2+\frac{1}{R^2}\nabla_x\cd\nabla_zv^3\right) \\
\ds \quad\quad +\frac{1}{R^2}\left(\Delta_zv^0+\frac{1}{R}\Delta_zv^1+\frac{1}{R^2}\Delta_zv^2+\frac{1}{R^3}\Delta_zv^3\right) 
+\frac{\kappa(x)}{R}\cd\left(\nabla_xv^1+\frac{1}{R}\nabla_xv^2+\frac{1}{R^2}\nabla_xv^3\right) \\
\ds \quad\quad +\frac{\kappa(x)}{R}\left(\nabla_zv^0+\frac{1}{R}\nabla_zv^1+\frac{1}{R^2}\nabla_zv^2+\frac{1}{R^3}\nabla_zv^3\right).
\end{array}
\end{equation}
At this point we can set $A(t)=\frac{1}{t^m}$, with $m$ to be decided later. Then $\frac{A'}{A}=-\frac{m}{t}=-\frac{m}{R^2}$ in the expression above. On the other hand, 
$$\beta(t)=\frac{\alpha}{\cs(e')t}+O\left(\frac{1}{t^{3/2}} \right)=\frac{\alpha}{\cs(e')R^2}+O\left(\frac{1}{R^{3}} \right).$$
We will now choose the functions $v^i$ in such a way that the terms of order $\frac{1}{R}, \frac{1}{R^2}$ and $\frac{1}{R^3}$ cancel out. Note that for $\beta(t)$ plays no role for these terms.\\

\noindent Equating the terms of order $1/R$ we get
\begin{equation*} %eq for terms of order 1/R
(\Delta_x +\kappa(x)\cd\nabla_x)v^1(x,z)=-(\kappa(x)+\ws(e)e)\cd\nabla_zv^0(z).
\end{equation*}
Note that the operator on the left is what we denoted by $L_x$ in the proof of Lemma \ref{nu.existance}. In the same lemma we constructed the functions $\chi_i$ in such a way that the vector $\chi=\sum_i\chi(x)_ie_i$ satisfied equation \eqref{chi}, namely
\begin{equation*} %eq for \chi
L_x\chi(x)=\ls(e')(\kappa(x)+\ws(e)e).
\end{equation*}
Using this, family of solutions to the equation for $v^1$ is given by
\begin{equation*}
v^1(x,z)=V^1(z)-\bar{\chi}(x)\cd\nabla_zv^0(z),
\end{equation*} 
where $\bar{\chi}(x)=\tilde{\chi}(x)+\chi_0$ for any constant $\chi_0$ and function $V^0(z)$, and $\tilde{\chi}=\frac{\chi(x)}{\ls(e')}$. \\

\noindent Next, equating the terms of order $1/R^2$ we get
\begin{equation*}
-mv^0-\ws(e)e\cd\nabla_zv^1-z/2\cd\nabla_zv^0 =\Delta_xv^2+2\nabla_x\cd\nabla_zv^1+\Delta_zv^0 +\kappa(x)\cd\nabla_xv^2+\kappa(x)\cd\nabla_zv^1,
\end{equation*}
which can be rewritten as
\begin{equation}\label{v2} %eq for terms of order 1/R^2
L_xv^2= -(2\nabla_x+\kappa(x)+\ws(e)e)\cd\nabla_zv^1-(\Delta_z+z/2\cd\nabla_z+m)v^0.
\end{equation}
The criterion for the solvability of this equation is that the function on the right lies in the orthogonal complement of the kernel of the adjoint $L^*$ of the operator. Indeed the principal periodic eigenvalue of $L$ is $0$ and its corresponding eigenspace is 1-dimensional, spanned by the function $1$. Then by Krein-Rutman theorem, $L^*$ has $0$ as its principal eigenvalue, and the corresponding eigenspace is 1-dimensional as well. However from \eqref{kr}, $L^*\nu(x)=0$, so this eigenspace is spanned by $\nu(x)$. This shows that the kernel of $L^*$ is spanned by $\nu(x)$, so the solvability of \eqref{v2} is equivalent to the function on the right hand side being orthogonal to $\nu(x)$. This translates into
\begin{equation*}
\ds 0=\ds\int \nu(x)(2\nabla_x+\kappa(x)+\ws(e)e)\cd\nabla_zv^1(x,z)\,dx+ (\Delta_z+z/2\cd\nabla_z+m)v^0(z)\int\nu(x)\,dx.
\end{equation*} 
The term involving $V^1(z)$ vanishes because by \eqref{chi} and \eqref{kr},
\begin{equation*}
\int \nu(x)(\kappa(x)+\ws(e)e)\,dx=\int\nu(x) L\tilde{\chi}(x)\,dx=-\int L^*\nu(x)\tilde{\chi}(x)\,dx=0.
\end{equation*} 
On the other hand, $\int\nu(x)\,dx=1$, so the last term is an elliptic equation in $z$ for $v^0$. Finally, the $\bar{\chi}\cd\nabla_zv^0$ contributes by
\begin{equation*}
\sum\limits_{i,j}\left[\int\nu(x)(2\nabla_x+\kappa(x)+\ws(e)e)\cd e_i\tilde{\chi}_j(x)\,dx\right]\partial_{z_iz_j}v^0(z):=\sum\limits_{i,j}M_{ij}\partial_{z_iz_j}v^0(z).
\end{equation*}
Note that the constant $\chi_0$ vanishes as well for the same reason as $V^0(z)$. Therefore we end up with the equation
\ea{ll}{
\ds\nabla_z\cd((I-M)\nabla_zv^0) +z/2\cd\nabla_zv^0(z)+mv^0=0, \quad & z\cd e'>0,\\
\ds v^0(z)=0, & z\cd e'=0,\\
\ds v^0(z)>0, & z\cd e'>0.
}
To solve this we first show that this equation is elliptic by substituting $I-M$ with a symmetric positive-definite matrix $S$. Then $m$ will be simply the principal eigenvalue of the operator $\nabla\cd S\nabla+z/2\cd\nabla$. Indeed let
\begin{equation*}
S_{ij}:=I_{ij}-\frac{1}{2}(M_{ij}+M_{ji}).
\end{equation*}
\begin{equation*} %computing M_{ij}
\begin{array}{ll}
M_{ij} &= \ds \int 2\nu\nabla\tilde{\chi}_j\cd e_i+\int \nu(\kappa+\ws(e)e)\cd e_i\tilde{\chi}_j =\int 2\nu\nabla\tilde{\chi}_j\cd e_i +\int\nu(\Delta\tilde{\chi}_i+k\cd\nabla\tilde{\chi}_i )\tilde{\chi}_j\\
&\ds=\int 2\nu\nabla\tilde{\chi}_j\cd e_i -\int\nu\nabla\tilde{\chi}_i\cd\nabla\tilde{\chi}_j -\int(\nabla\nu\cd\nabla\tilde{\chi}_i-\nu\kappa\cd\nabla\tilde{\chi}_i)\tilde{\chi}_j\\
&\ds=\int 2\nu\nabla\tilde{\chi}_j\cd e_i -\int\nu\nabla\tilde{\chi}_i\cd\nabla\tilde{\chi}_j -\int F\cd\nabla\tilde{\chi}_i\tilde{\chi}_j,
\end{array}
\end{equation*}
where for the last equality we used \eqref{nu}. Then using the fact that $F$ is divergence free, we get
\begin{equation*}
\begin{array}{ll}
S_{ij}&\ds=\delta_{ij} -\int\nu(\nabla\tilde{\chi}_j\cd e_i+\nabla\tilde{\chi}_i\cd e_j) -\int\nu\nabla\tilde{\chi}_i\cd\nabla\tilde{\chi}_j +\frac{1}{2}\int F\cd(\nabla\tilde{\chi}_i\tilde{\chi}_j+\nabla\tilde{\chi}_j\tilde{\chi}_i)\\
&\ds=\int\nu(e_i\cd e_j-(\nabla\tilde{\chi}_i\tilde{\chi}_j+\nabla\tilde{\chi}_j\tilde{\chi}_i)+\nabla\tilde{\chi}_i\cd\nabla\tilde{\chi}_j)\\ &\ds=\int\nu(e_i-\nabla\tilde{\chi}_i)\cd(e_j-\nabla\tilde{\chi}_j).
\end{array}
\end{equation*}
Since $\nabla\tilde{\chi}_i$'s are not constant, $S$ is a symmetric positive-definite matrix, and therefore $v^00$ solves the elliptic equation $\nabla\cd S\nabla v^0+z/2\cd\nabla v^0+mv^0=0$. The solution of this can be written down explicitly as 
\begin{equation*}
v^0(z)=z\cd e'\e{-\frac{z\cd S^{-1}z}{4}}.
\end{equation*}
This forces $m=\frac{n+1}{2}$ as the principal eigenvalue of the operator. So up to now we have explicitly determined $m, v^0$,and $v^1(x,z)$, have freedom in $V^1(z)$ and have established the existence of $v^2(x,z)$ in a way that cancels terms of order $1/R$ and $1/R^2$. Using the exact expression for $v^0$ in \eqref{v2} we get that
\begin{equation*} %equation solved by v^2 after plugging in v^0
\begin{array}{ll}
L_xv^2(x,z)&\ds=-\nabla_z\cd(M \nabla_zv^0) +(2\nabla_x+\kappa+\ws(e)e)\cd\nabla_z(\bar{\chi}\cd\nabla_zv^0) -(2\nabla_x+\kappa+\ws(e)e)\cd\nabla_zV^1\\
&\ds=\sum\limits_{i,j}\left[ -M_{ij}+(2\partial_{x_i}+\kappa_i(x)+\ws(e)e\cd e_i)\bar{\chi}_j(x) \right]\partial_{z_iz_j}v^0(z) -(\kappa(x)+\ws(e)e)\cd\nabla_zV^1(z).
\end{array}
\end{equation*}
Note that we can find functions $\chi^{ij}(x)$ that are periodic in $x$ and solve 
\begin{equation*} % eq for \chi^{ij}
L_x\chi^{ij}(x)=-M_{ij}+(2\partial_{x_i}+\kappa_i(x)+\ws(e)e\cd e_i)\bar{\chi}_j(x) ,
\end{equation*}
because the solvability criterion for this is that the right hand side is orthogonal to $\nu(x)$, which follows directly from the definition of $M_{ij}$. Using this, a family of solutions for $v^2$ can be given by
\begin{equation*} % final form of v^2
v^2(x,z)=\sum_{ij}\chi^{ij}(x)\partial_{z_iz_j}v^0(z)-\bar{\chi}(x)\cd\nabla_zV^1(z)+V^2(z).
\end{equation*}
Finally, equating the terms of order $1/R^3$ we get
\begin{equation} \label{v3}
L_xv^3= \frac{\alpha}{\cs(e')}\ws(e)e\cd\nabla_zv^0 - \left(\Delta_z+\frac{z}{2}\cd\nabla_z+\left(m+\frac{1}{2}\right)\right)v^1 - (2\nabla_x+\kappa+\ws(e)e)\cd\nabla_zv^2
\end{equation}
Substituting in the solutions for $v^1$ and $v^2$ we obtain
\begin{equation*} %eq for v^3
\begin{array}{l}
L_xv^3 = \ds \frac{\alpha}{e\cd e'}e\cd\nabla_zv^0(z) -\left(\Delta_z+\frac{z}{2}\cd\nabla_z+\left(m+\frac{1}{2}\right)\right)V^1(z)\\ \ds+\sum\limits_{i}\left(\Delta_z+\frac{z}{2}\cd\nabla_z+\left(m+\frac{1}{2}\right)\right)\partial_{z_i}v^0(z)\bar{\chi}_i(x)
\ds -\sum\limits_{ijk}\partial_{z_iz_jz_k}v^0(z)(2\partial_{x_i}+\kappa_i(x)+\ws(e)e\cd e_i)\chi^{jk}(x)\\ \ds+\sum\limits_{ij}\partial_{z_iz_j}V^1(z)(2\partial_{x_i}+\kappa_i(x)+\ws(e)e\cd e_i)\bar{\chi}_j(x) -(\kappa(x)+\ws(e)e)\cd\nabla_zV^2(z).
\end{array}
\end{equation*}
To solve for $v^3$ we need again that the right hand side is orthogonal to $\nu(x)$. This translates into
\begin{equation*}
\begin{array}{l}
\ds \frac{\alpha}{e\cd e'}e\cd\nabla_zv^0(z) -\left(\Delta_z+\frac{z}{2}\cd\nabla_z+\left(m+\frac{1}{2}\right)\right)V^1(z) +\sum\limits_{i} \left(\int\nu\bar{\chi}_i\right)\left(\nabla_z\cd(I-S)\nabla_z+\frac{1}{2}\right)\partial_{z_i}v^0(z)\\
\ds -\sum\limits_{ijk}\left(\int\nu(2\partial_{x_i}+\kappa_i+\ws(e)e\cd e_i)\chi^{jk}\right) \partial_{z_iz_jz_k}v^0(z) +\nabla_z\cd(I-S)\nabla_zV^1(z)=0
\end{array}
\end{equation*}
Since $V^2$ does not appear in this condition, then we can take it to be $0$, ie set $V^2\equiv 0$. Let
\begin{equation*}
\begin{array}{l}
\ds a_i=\int\nu(x)\bar{\chi}_i(x)\,dx, \quad a=[a_1,\ldots, a_n]\\
\ds a_{ijk}=\int\nu(x)(2\partial_{x_i}+\kappa_i(x)+\ws(e)e\cd e_i)\chi^{jk}(x)\,dx
\end{array}
\end{equation*}
Then the solvabilitiy condition turns into an elliptic equation for $V^1$:
\begin{equation*}
\nabla_z\cd(S\nabla_zV^1) + \frac{z}{2}\cd\nabla_zV^1+\left(m+\frac{1}{2}\right)V^1 =-\sum\limits_{ijk}(a_{ijk}+a_i(I_{jk}-S_{jk})) \partial_{z_iz_jz_k}v^0 +\left(\frac{\alpha}{e\cd e'}e+\frac{1}{2}a\right)\cd\nabla_zv_0
\end{equation*}
Since we are interested in approximate solutions that are zero on the boundary $x\cd e'=\cs(e')$ of the moving domain, we look for $V^1$ satisfying the equation above in $z\cd e'>0$, together with the boundary conditions $V^1=0$ and $e'\cd\nabla_zV^1=0$ if $z\cd e'=0 $. Its existence is ensured by Cauchy-Kovalevskaya, since the coefficients and the function on the right hand side are all real analytic.
%Indeed using a change of variables one can get a similar elliptic equation on $z\cd e_1>0$, with $V^1=0$ and $\partial_{z_1}V^1=0$ on $z_1=0$. Then take the Fourier transform in the other $n-1$ coordinates, namely $z'=(z_2,\ldots, z_n)$. Let $t:=z_1$ and $V(t,\xi'):=(\mathcal{F}_{z'}V^1)(t,\xi')$. Then the equation becomes $$\partial_{tt}V=F(\{t,\xi',V,\partial_tV,\partial_{\xi_i}V,\partial_t\partial_{\xi_i}V,\partial_{\xi_i\xi_j}V:i,j=2,\ldots,n\})$$ for $t>0$, with initial conditions $V=0$ and $\partial_t=0$ for $t=0$. Cauchy-Kovalevskaya theorem implies that this equation has a solution. On the other hand, using the Gaussian decay of $v^0$ we have that the inverse Fourier transform of $V$ is the solution $V^1$ that we were looking for.
%
So $q(t,x)$ becomes
\begin{equation*}
\begin{array}{l} %final form of q(t,x)
\ds q(t,x)=\ds\frac{1}{t^{\frac{n+1}{2}}} v^0(z)+\frac{1}{t^{\frac{n+2}{2}}} v^1(x,z)+\frac{1}{t^{\frac{n+3}{2}}} v^2(x,z)+\frac{1}{t^{\frac{n+4}{2}}} v^3(x,z)\\
\ds=\ds\frac{1}{t^{\frac{n+1}{2}}} \frac{(x-\ws(e)t\,e)\cd e'}{\sqrt{t}}\e{-\frac{(x-\ws(e)t\,e)\cd S^{-1}(x-\ws(e)t\,e)}{4t}} +\frac{1}{t^{\frac{n+2}{2}}}V^1\left(\frac{x-\ws(e)t\,e}{\sqrt{t}}\right)\\ \ds-\frac{1}{t^{\frac{n+2}{2}}}\left(\frac{\chi(x)}{\ls(e')}+\chi_0\right)\cd\left(e'-\frac{1}{2t}(x-\ws(e)t\,e)\cd e'S^{-1}(x-\ws(e)t\,e)\right)\e{-\frac{(x-\ws(e)t\,e)\cd S^{-1}(x-\ws(e)t\,e)}{4t}}\\
\ds +\frac{1}{t^{\frac{n+3}{2}}}\sum\limits_{ij}\chi^{ij}(x)\left(\frac{x-\ws(e)t\,e}{2\sqrt{t}}\cd S^{-1}(e_i\cd e'e_j+e_j\cd e'e_i)\right)\e{-\frac{(x-\ws(e)t\,e)\cd S^{-1}(x-\ws(e)t\,e)}{4t}}\\
\ds +\frac{1}{t^{\frac{n+3}{2}}}\sum\limits_{ij}\chi^{ij}(x) \left(\frac{1}{4t^{\frac{3}{2}}}(x-\ws(e)t\,e)\cd e'(x-\ws(e)t\,e)\cd S^{-1}e_i\cd e'(x-\ws(e)t\,e)\cd S^{-1}e_i\right)\times\\
\hspace*{300pt}\e{-\frac{(x-\ws(e)t\,e)\cd S^{-1}(x-\ws(e)t\,e)}{4t}}\\ \ds-\frac{1}{t^{\frac{n+3}{2}}}\left(\frac{\chi(x)}{\ls(e')}+\chi_0\right)\cd\nabla_zV^1\left(\frac{x-\ws(e)t\,e}{\sqrt{t}}\right) +\frac{1}{t^{\frac{n+4}{2}}} v^3\left(x,\frac{x-\ws(e)t\,e}{\sqrt{t}}\right).
\end{array}
\end{equation*}
Hence, if $x\in B_{\sigma\sqrt{t}}(\ws(e)t\,e)$ and $(x-\ws(e)t\,e)\cd e'\ge 0$, \eqref{qbounds} holds due to the periodicity in $x$ and uniform bounds in $z$ of $v^2, v^3, V^1$.

\subsubsection{Proof of Proposition \ref{approxsol}}
Let $\phi(t,x):=\bar{p}^{app}(t,x)-q(t,x)$. Using the equations for $\bar{p}^{app}$ and $q(t,x)$ and Lemma \ref{nu.existance}, we get that $\phi$ solves
\ea{ll}{
\ds(1-\beta(t))\partial_t\phi=\frac{1}{\nu(x)}\nabla\cd(\nu(x)\nabla\phi)-\frac{F(x)}{\nu(x)}\cd\nabla\phi+O\left(\frac{1}{t^{\frac{n+5}{2}}}\right), \quad & x\in C_t,\ t>1\\
\ds\phi(t,x)=0, & x\in\partial C_t,\ t>1,\\
\ds\phi(1,x)=0, & x\in\partial C_1.
}
%
%To obtain an $L^\infty$ bound in $x$ we first start by getting a norm on the $L^2$ bound. Using the equation and the boundary values we get
%%
%\begin{equation*}
%\begin{array}{l}
%\ds\frac{d}{dt}\int\limits_{C_t}(1-\beta(t))\nu(x)\phi^2\,dx \ds=-\beta'(t)\int\limits_{C_t}\nu(x)\phi^2\,dx+2\int\limits_{C_t}(1-\beta(t))\nu(x)\phi\partial_t\phi\,dx\\
%\ds=-\beta'(t)\int\limits_{C_t}\nu(x)\phi^2\,dx-2(1-\beta(t))\int\limits_{C_t}\nu(x)|\nabla\phi|^2\,dx+O\left(\frac{1}{t^{\frac{n+5}{2}}}\right)\int\limits_{C_t}\nu(x)\phi\,dx.
%\end{array}
%\end{equation*}
%%
%Since $\phi=0$ on the boundary of $C_t$ and $|\beta'(t)|\le\frac{C}{t^2}$, applying Poincar\'e inequality we get that
%\begin{equation*}
%\left|\beta'(t)\int\limits_{C_t}\nu(x)\phi^2\,dx\right| \le\frac{C_1\sigma^2}{t}\int\limits_{C_t}\nu(x)|\nabla\phi|^2\,dx.
%\end{equation*}
%%
%On the other hand,
%\begin{equation*}
%\left|O\left(\frac{1}{t^{\frac{n+5}{2}}}\right)\int\limits_{C_t}\nu(x)\phi\,dx\right|\le 
%\end{equation*}
%Now we can pick $t_0$ large enough so that for $t>t_0$, 
%
Note that $|\beta(t)|\le 1/2$ for all $t$, and $F$ and $\nu$ are uniformly bounded, so we may apply the ABP maximum principle (see \cite{krylov}). Denote by $D_t$ the parabolic cylinder $D_t:=\{(s,x): s\in(1,t), x\in C_s\}$, and by $\partial'D_t:=\{(s,x):\ s=1 \ \text{ or } \ s\in(1,t), x\in\partial D_s\}$ its parabolic boundary. Then
\begin{equation*}
\sup\limits_{D_t}|\phi|\le \sup\limits_{\partial'D_t}|\phi|+ C(diam(C_t))^{\frac{n}{n+1}}\left|\left|O\left(\frac{1}{t^{\frac{n+5}{2}}}\right)\right|\right|_{L^{n+1}(D_t)},
\end{equation*}
where $C$ depends only on the dimension. Then using the fact that $\phi=0$ on $\partial'D_t$ we get that for all $t\ge 1$,
\begin{equation*}
|\phi(t,x)|\le \frac{C\sigma^{\frac{1}{n+1}}}{t^{\frac{n+2}{2}}}.
\end{equation*}

\subsection{Proof of the main proposition \ref{p.bar.est}}\label{sec.logprop}
From the previous two sections we have control of $\bar{p}$ at positions of order $O(\sqrt{t})$ from the boundary, as well as explicit bounds for $\bar{p}^{app}$ with boundary data on $P_t$ agreeing with $q$. Note that we have a degree of freedom in the definition of $q$ in \eqref{qbounds} coming from $\chi_0$, and we will pick this appropriately so that $\bar{p}^{app}$ and $\bar{p}$ are comparable at the boundary of $P_t$, where $P_t$ is defined as
$$P_t:=\{x=x'+(\ws(e)t+r)e: r\in(0,\epsilon\sqrt{t}), x'\cd e'=0, |x'|\le\epsilon'\sqrt{s}, \text{ for } s<t \text{ so that } \ws(e)t+r=\ws(e)s+\epsilon\sqrt{s}\}.$$
$\epsilon$ and $\epsilon'$ are taken so that the result of Proposition \ref{sqrttdecay} holds for a fixed $x_0$. Note that 
\begin{equation}\label{s.lowerbound}
\ws(e)t<\ws(e)t+r =\ws(e)s+\epsilon\sqrt{s} \le(\ws(e)+\epsilon)s \text{ for } s\ge 1 \implies s\ge\frac{\ws(e)}{\ws(e)+\epsilon}t, \forall t\ge t_0,
\end{equation}
where $t_0$ is so that $s\ge 1$ is satisfied. We split the boundary of $P_t$ in three components:
\begin{equation*}
\partial^0P_t:=\partial P_t\cap\{r=0 \}, \quad \partial^{\epsilon\sqrt{t}}P_t=\partial P_t\cap \{r=\epsilon\sqrt{t} \}, \quad \partial^sP_t=\partial P_t\cap \{r\in(0,\epsilon\sqrt{t}), |y|=\epsilon'\sqrt{s} \}
\end{equation*}

The strategy is to control $\bar{p}$ in $P_t$ by keeping it between positive multiples of $\bar{p}^{\pm}$, defined as solutions of \eqref{approxsoleq} corresponding to $q^\pm$ in Proposition \ref{qfunction} with $\chi_0=\chi_0^{\pm}$ chosen appropriately. If $\chi_0\cd e'> \left|\left|\frac{\chi\cd e'}{\ls(e')}\right|\right|_{L^\infty}$, then $q(t,x)<0$ for $x\cd e'=\cs(e')t$ and $t$ large enough, therefore let $\chi_0^-=2\left|\left|\frac{\chi\cd e'}{\ls(e')}\right|\right|_{L^\infty}e'$. Similarly $q>0$ on the same boundary if $\chi_0\cd e'<- \left|\left|\frac{\chi\cd e'}{\ls(e')}\right|\right|_{L^\infty}$, so let $\chi_0^+=-2\left|\left|\frac{\chi\cd e'}{\ls(e')}\right|\right|_{L^\infty}e'$.

From Proposition \ref{qfunction},
\begin{equation*}
\left|q^{\pm}(t,x)-\frac{C\sigma}{t^{\frac{n+1}{2}}}\right|\le \frac{C}{t^{\frac{n+2}{2}}}
\end{equation*}
for all $x=y+(\ws(e)t+r)e, r\in\left(\frac{\sigma}{2}\sqrt{t},\sigma\sqrt{t}\right), |y|<\sigma\sqrt{t}, y\cd e'=0$. Then from Proposition \ref{sqrttdecay} with $\sigma=\epsilon$, by decreasing $\epsilon'$ if necessary, we have that on $\partial^{\epsilon\sqrt{t}}P_t$ and $\partial^sP_t$ for $s$ corresponding to $r\in(\epsilon/2,\epsilon)$,
%for $x=y+(\ws(e)t+\sigma\sqrt{t})\,e$ with $y\cd e'=0, |y|\le\sigma/2\sqrt{t}$ and $t>1$,
\begin{equation*}
\bar{p}(t,x)\le\frac{C_\epsilon}{t^{\frac{n+1}{2}}}\le Mq^+(t,x)=M\bar{p}^+(t,x),
\end{equation*}
\begin{equation*}
\bar{p}(t,x)\ge\frac{C_\epsilon}{t^{\frac{n+1}{2}}}\ge M^{-1}q^-(t,x)=M^{-1}\bar{p}^-(t,x).
\end{equation*}
On the other hand, from the choice of $\chi_0$, for all $x=y+\ws(e)t\,e$ with $y\cd e'=0, |y|\le\sigma/2\sqrt{t}$ and $t>1$,
\begin{equation*}
\bar{p}(t,x)=0\le Mq^+(t,x)=M\bar{p}^+(t,x),
\end{equation*}
\begin{equation*}
\bar{p}(t,x)=0\ge M^{-1}q^-(t,x)=M^{-1}\bar{p}^-(t,x).
\end{equation*}
Hence, we only need to prove the same inequalities for $x\in\partial^sP_t\cap\{r\in(0,\frac{\epsilon}{2}\sqrt{t}) \}$. In this range,
\begin{equation*}
\ws(e)s+\epsilon\sqrt{s}\le\ws(e)t+\frac{\epsilon}{2}\sqrt{t} \implies \frac{\epsilon}{2}\sqrt{t} \le\ws(e)(t-s)+\epsilon(\sqrt{t}-\sqrt{s}) =\ws(e)(t-s)+\epsilon\frac{t-s}{\sqrt{t}+\sqrt{s}} \le(\ws(e)+\epsilon)(t-s),
\end{equation*}
where the last inequality holds because $s,t\ge 1$. Hence
\begin{equation}\label{t-s.bound}
t-s\ge\frac{\epsilon/2}{\ws(e)+\epsilon}\sqrt{t}.
\end{equation}
The upper bound follows from
\begin{equation*}
\bar{p}(t,x+\ws(e)t\,e)\le C\frac{x\cd e'}{t^{n+\frac{1}{2}}}, \quad \forall x\in\rn.
\end{equation*}
This can be proved following the same steps as in Proposition \ref{linearbounds}, using Lemma \ref{l1bound}. Using this and \eqref{qbounds} for $x\in\partial^sP_t$, and increasing $M$ if necessary,
\begin{equation*}
\bar{p}(t,x=x'+(\ws(e)t+r)e) \le C\frac{r}{t^{\frac{n}{2}+1}} \le Mq_+(t,x)=Mp^+.
\end{equation*}
The lower bound follows from a combination of the result of Proposition \ref{sqrttdecay} at time $s$ and heat kernel bounds. It is easy to show that the result of Lemma \ref{heat.kernel.bounds} is still true for \eqref{pbareq}, so denoting by $\bar{\Gamma}$ the heat kernel corresponding to this equation, we have that for $x\in\partial^sP_t\cap\{r\in(0,\frac{\epsilon}{2}\sqrt{t})\}$,
\begin{equation*}
\begin{array}{ll}
\ds\bar{p}(t,x=x'+(\ws(e)t+r)e) &\ds=\bar{p}(t,x'+(\ws(e)s+\epsilon\sqrt{s})e) \ge\int\bar{p}(s,y)\bar{\Gamma}(t,x'+(\ws(e)t+r)e,s,y)\,dy  \\
\ds &\ds =\int\bar{p}(s,y+\ws(e)s\,e)\bar{\Gamma}(t,x'+(\ws(e)t+r)e,s,y+\ws(e)s\,e)\,dy.
\end{array}
\end{equation*}
From Lemma \ref{heat.kernel.bounds},
\begin{equation*}
\bar{\Gamma}(t,x'+(\ws(e)t+r)e,s,y+\ws(e)s\,e)\ge \frac{a}{K(t-s)^\frac{n}{2}}\e{-K\frac{|x'+re-y|^2}{t-s}}
\end{equation*}
for all $s<t<s+R^2$, $x'+re,y\in B_{\delta R}(x_0)$, with $a,K$ depending only on $\delta\in(0,1)$, but independent of $x_0$ and $R$.\\
On the other hand, from Proposition \ref{sqrttdecay},
\begin{equation*}
\bar{p}(s,y+\ws(e)s\,e)\ge \frac{1}{Cs^{\frac{n+1}{2}}}, \quad \forall y\in B_{\epsilon'\sqrt{s}}((\epsilon\sqrt{s}+x_0')e), \text{ where } x_0'>0.
\end{equation*}
Now choose $x_0=(\epsilon\sqrt{s}+x_0')e$; $R=\sqrt{\frac{\epsilon}{\ws(e)}}\sqrt{s}$, so that $t-s\le R^2$ from \eqref{s.lowerbound}; $\delta=\epsilon'\sqrt{\frac{\ws(e)}{\epsilon}}$ so that $\delta R=\epsilon'\sqrt{s}$. By decreasing $\epsilon'$ if necessary, $\delta<1$. These imply that
\begin{equation*}
\begin{array}{ll}
\ds\bar{p}(t,x=x'+(\ws(e)t+r)e) &\ds\ge \int\limits_{B_{\epsilon'\sqrt{s}}((\epsilon\sqrt{s}+x_0')e)}\bar{p}(s,y+\ws(e)s\,e)\bar{\Gamma}(t,x'+(\ws(e)t+r)e,s,y+\ws(e)s\,e)\,dy.\\
&\ds\ge C \frac{1}{s^{\frac{n+1}{2}}} \frac{1}{(t-s)^{\frac{n}{2}}} \int\limits_{B_{\epsilon'\sqrt{s}}((\epsilon\sqrt{s}+x_0')e)}\e{-K\frac{|x'+re-y|^2}{t-s}}.
\end{array}
\end{equation*}
To bound the integral, we use the fact that
\begin{equation*}
\frac{1}{(t-s)^{\frac{n}{2}}} \int\limits_{\rn}\e{-K\frac{|x'+re-y|^2}{t-s}}= \text{constant}\implies \frac{1}{(t-s)^{\frac{n}{2}}} \int\limits_{B_{\gamma\sqrt{t-s}}(x'+re)}\e{-K\frac{|x'+re-y|^2}{t-s}}\ge c_\gamma
\end{equation*}
for every $\gamma\in\R^+$. Thus we could bound the integral from below if for some $\gamma$,
\begin{equation*}
B_{\gamma\sqrt{t-s}}(x'+re)\subseteq B_{\epsilon'\sqrt{s}}((\epsilon\sqrt{s}+x_0')e).
\end{equation*}
But this is true by \eqref{t-s.bound} and the fact that $|x'|=\epsilon'\sqrt{s}$, so
\begin{equation*}
\bar{p}(t,x=x'+(\ws(e)t+r)e)\ge c\frac{1}{s^\frac{n+1}{2}} =c\frac{\sqrt{s}}{s^\frac{n+2}{2}}\ge c\frac{r}{t^{\frac{n+2}{2}}}.
\end{equation*}
The last inequality is due to $r\le\epsilon\sqrt{s}$ and \eqref{s.lowerbound}.

Now from the bounds \eqref{qbounds} of $q^-$ we have that on the same part of the boundary of $P_t$, $q^-(t,x=x'+(\ws(e)t+r)e)\le C\frac{r}{t^{\frac{n+2}{2}}}$, thus  increasing $M$ again if necessary, $\bar{p}\ge M^{-1}q^-$.

This shows that on the boundary of $P_t$,
\begin{equation*}
Mp^+\ge \bar{p}\ge M^{-1}p^-,
\end{equation*}
therefore by Proposition \ref{approxsol}, we obtain the bounds in Proposition \ref{p.bar.est}.

%Increasing $M$ if necessary, by standard parabolic regularity we deduce that
%\begin{equation*}
%M^{-1}\bar{p}^-(t,x)\le\bar{p}(t,x)\le M\bar{p}^+(t,x)
%\end{equation*}
%for all $x=y+(\ws(e)t+r)e, r\in(0,\sigma\sqrt{t}), y\cd e'=0, |y|\le \sigma'\sqrt{t}$ for some $\sigma'\le\sigma$, and $t>1$. Therefore combining the result of Proposition \ref{approxsol} and the bounds on $q^\pm$ we obtain that
%\begin{equation*}
%M^{-1}\frac{x\cd e'}{t^\frac{n}{2}+1}\le \bar{p}(t,\ws(e)t\,e)\le M\frac{x\cd e'}{t^\frac{n}{2}+1}
%\end{equation*}
%for all $x\in B_{\sigma'\sqrt{t}}(0)\cap\{x\cd e'>0\}$.

%
%\subsubsection*{Proof of proposition \ref{p.bar.est}}

%\section{Proof of the upper bound}


\begin{thebibliography}{99}
\small

\bibitem{aw1} D.G. Aronson and H.F. Weinberger, {\em Multidimensional nonlinear diffusions arising in population genetics}, Adv. Math. {\bf 30} (1978), 33--76.

\bibitem{aw2} D.G. Aronson, H.F. Weinberger, {\em Nonlinear diffusion in population genetics, combustion, and nerve pulse propagation}, Proc. of the Tulane Program in PDEs and related topics, Springer-Verlag, Berlin-Heidelberg-New York (1975), 5--49.

\bibitem{bh1} H. Berestycki and F. Hamel, {\em Front propagation in periodic excitable media}, Comm. Pure Appl. Math. {\bf 55} (2002), 949--1032.

\bibitem{bh2} H. Berestycki and F. Hamel, {\em Generalized travelling waves for reaction-diffusion equations}, Perspectives in Nonlinear Partial Differential Equations. In honor of H.~Brezis, Amer. Math. Soc., Contemp. Math. {\bf 446}, 2007, 101--123.

\bibitem{bhn1} H. Berestycki, F. Hamel and N. Nadirashvili, {\em The speed of propagation for KPP type problems. I.~Periodic framework}, J. Eur. Math. Soc. {\bf 7} (2005), 173--213. 

\bibitem{bhn2} H. Berestycki, F. Hamel and N. Nadirashvili, {\em Elliptic eigenvalue problems with large drift and applications to nonlinear propagation phenomena}, Comm. Math. Phys. {\bf 253} (2005), 451--480.

\bibitem{bbd} J. Berestycki, \'E. Brunet and B. Derrida, {\em Exact solution and precise asymptotics of a Fisher-KPP type front}, J. Phys. A: Math. Theor. {\bf 51} (2018).

\bibitem{bram1} M.D. Bramson, {\em Maximal displacement of branching Brownian motion}, Comm. Pure Appl. Math. {\bf 31} (1978) 531--581.

\bibitem{bram2} M.D. Bramson, {\em Convergence of solutions of the Kolmogorov equation to travelling waves}, Mem. Amer. Math. Soc. {\bf 44} (1983). 

\bibitem{es1} U. Ebert, W. van Saarloos, {\em Front propagation into unstable states: universal algebraic convergence towards uniformly translating pulled fronts}, Phys. D {\bf 146} (2000), 1--99.

\bibitem{es2} U. Ebert, W. van Saarloos, B. Peletier, {\em Universal algebraic convergence in time of pulled fronts: the common mechanism for difference-differential and partial differential equations}, European J. Appl. Math. {\bf 13} (2002), 53--66.

\bibitem{evsoug} L.C. Evans, P.E. Souganidis, {\em A PDE approach to certain large deviation problems for systems of parabolic equations}, Analyse non lin\'eaire (Perpignan, 1987). Ann. Inst. H. Poincar\'e Anal. Non Lin\'eaire {\bf 6} (1989), suppl., 229--258.

\bibitem{fs} E. B. Fabes, D. W. Stroock, {\em A new proof of Moser's parabolic Harnack inequality using the old ideas of Nash}, Arch. Ration. Mech. Anal. {\bf 96} (1986), 327--338.

\bibitem{fisher} R.A. Fisher, {\em The wave of advance of advantageous genes}, Ann. Eugenics {\bf 7} (1937), 353--369.

\bibitem{freidlin} M.I. Freidlin, {\em Functional integration and partial differential equations}, Annals of Mathematics Studies {\bf 109}, Princeton University Press, Princeton, NJ, 1985.

\bibitem{fg} M.I. Fre\u{\i}dlin, J. G\"artner, {\em The propagation of concentration waves in periodic and random media}, Dokl. Acad. Nauk SSSR {\bf 249} (1979), no. 3, 521--525.

\bibitem{gartner} J. G\"artner, {\em Location of wave fronts for the multi-dimensional K-P-P equation and Brownian first exit densities}, Math. Nachr. {\bf 105} (1982), 317--351.

\bibitem{hnrr1} F. Hamel, J. Nolen, J.-M. Roquejoffre and L. Ryzhik, {\em A short proof of the logarithmic Bramson correction in Fisher-KPP equations}, Networks and Heterogeneous Media, {\bf 8} (2013), no. 1, 275--289.

\bibitem{hnrr2} F. Hamel, J. Nolen, J.-M. Roquejoffre and L. Ryzhik, {\em The logarithmic delay of KPP fronts in a periodic medium}, Jour. Eur. Math. Soc. {\bf 18} (2016), 465--505.

\bibitem{henderson}  C. Henderson, {\em Population stabilization in branching Brownian motion with absorption and drift}, Comm. Math. Sci. {\bf 14} (2015), no. 4, 973--985.

\bibitem{kpp} A.N. Kolmogorov, I.G. Petrovsky and N.S. Piskunov, {\em \'Etude de l'\'equation de la diffusion avec croissance de la quantit\'e de mati\`ere et son application \`a un probl\`eme biologique}, Bull. Univ. \'Etat Moscou, S\'er. Inter.~A {\bf 1} (1937), 1--26.

\bibitem{kr} M.G. Kr\u ein, M.A. Rutman, {\em Linear operators leaving invariant a cone in a Banach space}, Amer. Math. Soc. Translation 1950 {\bf 26} (1950), 128 pp. 

\bibitem{krylov} N.V. Krylov, {\em Nonlinear elliptic and parabolic equations of the second order}, Mathematics and its Applications, Reidel, Norwell, Massachusetts, 1987.

\bibitem{lau} K.-S. Lau, {\em On the nonlinear diffusion equation of Kolmogorov, Petrovskii and Piskunov}, J. Diff. Eqs. {\bf 59} (1985), 44--70.

\bibitem{ltz} E. Lubetzky, C. Thornett, O. Zeitouni, {\em Maximum of branching Brownian motion in a periodic envirenment}, Preprint, 2018.

\bibitem{mck} H.P. McKean, {\em Application of Brownian motion to the equation of Kolmogorov-Petrovskii-Piskunov}, Comm. Pure Appl. Math. {\bf 28} (1975), 323--331.

\bibitem{nash} J. Nash, {\em Continuity of solutions of parabolic and elliptic equation}, Amer. J. Math. {\bf 80} (1958), 931--954.

\bibitem{nrr1} J. Nolen, J.-M. Roquejoffre, L. Ryzhik, {\em Refined long time asymptotics for the Fisher-KPP fronts}, Comm. Cont. Math. {\bf 21} (2019), no. 7.

\bibitem{nrr2} J. Nolen, J.-M. Roquejoffre, L. Ryzhik, {\em Convergence to a single wave in the Fisher-KPP equation}, Chinese Ann. Math., Series B {\bf 38} (2017), no. 2, 629--646.

\bibitem{norris} J. Norris, {\em Long-time behavior of heat flow: global behavior and exact asymptotics}, Arch. Ration. Mech. Anal. {\bf 140} (1997), 161--195.

\bibitem{uchiyama} K. Uchiyama, {\em The behavior of solutions of some nonlinear diffusion equations for large time}, J.~Math. Kyoto Univ. {\bf 18} (1978), 453--508.

\bibitem{weinberger} H. Weinberger, {\em On spreading speeds and traveling waves for growth and migration models in a periodic habitat}, J. Math. Biol. {\bf 45} (2002).

\bibitem{xin} J. Xin, {\em An introduction to fronts in random media}, Surveys and Tutorials in the Applied Mathematical Sciences, 5. Springer, New York, 2009.

\end{thebibliography}
\end{document}